\documentclass[reqno,11pt]{amsart}
\usepackage[english]{babel}
\usepackage{amssymb,verbatim,microtype,graphicx,hyperref,cite}
\usepackage[T1]{fontenc}
\usepackage{xcolor}

\usepackage{amsmath}
\usepackage{amssymb}

 \newcommand{\beq}{\begin{equation}}
\newcommand{\eeq}{\end{equation}}

\newtheorem{theorem}{Theorem}[section]
\newtheorem{proposition}[theorem]{Proposition}
\newtheorem{lemma}[theorem]{Lemma}
\newtheorem{corollary}[theorem]{Corollary}

\newtheorem{definition}[theorem]{Definition}
\newtheorem{example}[theorem]{Example}

\newtheorem{remark}[theorem]{Remark}

\numberwithin{equation}{section}

\newcommand{\PSH}[1]{\mbox{$\mathcal{PSH}(#1)$}}
\newcommand{\LPSH}[1]{\mbox{$\mathfrak{L}(#1)$}}
\newcommand{\LPSHs}[1]{\mbox{$\mathfrak{L^*}(#1)$}}
\newcommand{\TPSH}[1]{\mbox{$\mathcal{TPSH}(#1)$}}
\newcommand{\TLPSH}[1]{\mbox{$\mathcal{T}\mathfrak{L}(#1)$}}
\newcommand{\TLPSHs}[1]{\mbox{$\mathcal{T}\mathfrak{L^*}(#1)$}}
\newcommand{\LCT}{{\operatorname{c_\infty}}}
\newcommand{\LCTh}{{\operatorname{\hat c_\infty}}}

\newcommand{\D}{{\mathbb D}}
\newcommand{\Z}{{\mathbb Z}}
\newcommand{\R}{{\mathbb R}}
\newcommand{\B}{{\mathbb B}}
\newcommand{\Tn}{{\mathbb T^n}}
\newcommand{\Rn}{{\mathbb R}^n}
\newcommand{\Rnm}{{\mathbb R}_-^n}
\newcommand{\Rnp}{{\mathbb R}_+^n}
\newcommand{\C}{{\mathbb  C}}

\newcommand{\Cn}{{\mathbb  C\sp n}}

\newcommand{\cH}{{\mathcal H}}
\newcommand{\cI}{{\mathcal  I}}
\newcommand{\cJ}{{\mathcal  J}}
\newcommand{\cO}{{\mathcal  O}}
\newcommand{\cP}{{\mathcal P_\infty}}
\newcommand{\wcP}{{\mathcal P}}
\newcommand{\cPs}{{\mathcal  P^*}}

\newcommand{\bone}{{\bf 1}}
\newcommand{\simk}{{\C^n\setminus K}}

\newcommand{\LL}{\mathcal{L}}
\def\Sp{\textnormal{Sp}}
\def\Lin{\mathbf L}

\def\geq{\geqslant}
\def\leq{\leqslant}
\def\m{\mathrm m}
\def\supp {\mathrm{supp}}

\def\to{\longrightarrow}

\begin{document}

\title{Log canonical thresholds at infinity}

\author{Carles Bivià-Ausina}\address{Institut Universitari de Matemàtica Pura i Aplicada, Universitat Politècnica de València, Camí de Vera s/n, 46022 València, Spain}\email{carbivia@mat.upv.es}

\author{Alexander Rashkovskii}\address{Department of Mathematics and Physics, University of Stavanger, 4036 Stavanger, Norway}\email{alexander.rashkovskii@uis.no}

\keywords{Plurisubharmonic function of logarithmic growth, polynomial map, log canonical threshold, Newton polytope}
\subjclass[2020]{32U05, 32U15, 32W20, 32A08}

\date{\today}

\begin{abstract} The paper considers a global version of the notion of log canonical threshold $\LCT(u)$ for plurisubharmonic functions $u$ of logarithmic growth in $\Cn$, aiming at description of the range of all $p>0$ such that $e^{-u}\in L^p(\Cn)$. Explicit formulas for $\LCT(u)$ are obtained in the toric case. By considering Bergman functions of corresponding weighted Hilbert spaces, a new polynomial approximation of
plurisubharmonic functions of logarithmic growth with control over
its singularities and behavior at infinity (a global version of Demailly's approximation theorem) is established. Some applications to polynomial maps are given. 
\end{abstract}

\maketitle

\section{Introduction}

Behavior of a plurisubharmonic function $u$ near its singularity points is of fundamental interest in pluripotential theory, and such characteristics of plurisubharmonic singularities as Lelong numbers and their generalizations are standard tools of complex analysts and geometers. Another powerful approach concerns local integrability properties of $e^{-u}$. If $u(\zeta)\neq-\infty$, then $e^{-u}\in L^p$ near the point $\zeta$ for any $p>0$, while this need not be true if $u(\zeta)=-\infty$. Traditionally, one measures the singularity of $u$ at $\zeta$ by means of the {\it log canonical threshold} $c_\zeta(u)$ at $\zeta$ (the {\sl least upper bound} of the set of all positive numbers $c$ such that $e^{-cu}$ is square-integrable near $\zeta$) or, equivalently, by its reciprocal $\lambda_\zeta(u)=1/c_\zeta(u)$, the {\it integrability index} at $\zeta$. After the fundamental paper \cite{DK}, it was realized that these notions play a huge role in local pluripotential theory and its applications, which has resulted in great activity in the area. There is a vast literature on the subject, here we mention only a few publications: \cite{ACKPZ}, \cite{B}, \cite{BF1}, \cite{BL}, \cite{DP}, \cite{Ein2004}, \cite{dFEM1},  \cite{Gue}, \cite{Kollar}, \cite{McZ}, \cite{Mu2}, \cite{Rash13}-\cite{Rash17}, and many more; we deliberately omit here the whole range of papers concerning the openness conjecture and related topics because they are a bit out of our focus here. 

For functions plurisubharmonic in the whole space $\Cn$, it is also highly desirable to measure their asymptotic behavior near infinity. Of special interest are functions of logarithmic growth: $u(z)\le \sigma_u\log|z|+O(1)$ for $|z|\gg 1$; we denote the class of such functions by $\LPSH\Cn$. 
The main motivation for studying it is its relevance to polynomial maps.

Our goal here is to consider global version(s) of the log canonical threshold (and integrability index)  for this class of functions, which would allow us to describe the range of all $p>0$ such that $e^{-u}\in L^p(\Cn)$. We were not able to find any detailed study of this subject in the literature.

We introduce a notion of {\it log canonical threshold at infinity}, $\LCT(u)$, as the {\sl greatest lower bound} of the set of all positive numbers $c$ such that $e^{-cu}$ is square-integrable near infinity (that is, outside a compact subset of $\Cn$). Its reciprocal $\lambda_\infty(u)=1/\LCT(u)$ is the {\it integrability index of $u$ at infinity}.

  While for $n=1$, $\lambda_\infty(u)$ coincides with the logarithmic type $\sigma_u$ of $u$ (Theorem~\ref{thm:C}), the situation for $n>1$ is much more complicated. In particular, we do not know if the (square-)integrability of $e^{-cu}$ near infinity implies that for $e^{-c'u}$ with   $c'>c$: note that $\liminf u(z)$ as $|z|\to\infty$ can be equal to $-\infty$.  It is much simpler to work with the value $\LCTh(u):=\LCT(\max\{u,0\})$. Evidently, $\LCTh(u)\le\LCT(u)$, however there exist polynomials $P$ in $\C^2$ with $\LCTh(\log|P|)<\LCT(\log|P|)=\infty$, see Example~\ref{ex:P2}. 
  
  The two values evidently coincide for functions from the important 
class $\LPSHs\Cn$ of plurisubharmonic functions $u$ of logarithmic growth  that are exhaustive at infinity (i.e., $u(z)\to+\infty$  as $|z|\to \infty$). For such functions, we establish some useful elementary properties of $\LCT(u)$, parallel to those of $c_a(u)$. In particular, $\LCT(u)\ge\LCT(\Psi_u)$ with equality in the case of toric $u$; here $\Psi_u$ is the logarithmic indicator of $u$ introduced in \cite{Rash01}. The value of $\LCT(\Psi_u)$ can be computed by analogs of Kiselman's and Howald's formulas (Proposition~\ref{prop:ind_Kiselman}), in terms of the indicator diagram of $u$, which in the polynomial case $u=\log|P|$ is the Newton polyhedron of the map $P$ at infinity. The formulas extend to arbitrary {\sl toric} functions from the class $\LPSHs\Cn$ (Theorem~\ref{thm:toric_Kiselman}) and and to
polynomial maps that are Newton non-degenerate at infinity (Corollary~\ref{cor:NNDlct}).

We also consider {\it multipliers at infinity} of $u\in\LPSH\Cn $, i.e., polynomials $P$ such that $|P|e^{-u}$ is square-integrable at infinity. While multipliers improve integrability of $e^{-u}$ near finite points, at infinity they worsen it; the collection $\cP(u)$ of such polynomials measures the `integrability surplus' for $e^{-u}$ at infinity. It is a finite-dimensional vector space with the property $P\in\cP(u)$, {\sl provided $|P|\le |Q|$ for some $Q\in\cP(u)$}; contrary to the local situation, $\cP(u)$ is not an ideal. In the case of toric $u\in\LPSHs\Cn$, the space $\cP(u)$ has monomial generators and can be described explicitly in terms of the indicator diagram of $u$ (Theorem~\ref{theo:How2}), which corresponds to Howald's theorem in the local case.

Even more interesting is the vector space $\wcP(u)$ of polynomials $P$ such that $|P|e^{-u}\in L^2(\Cn)$. Been endowed with the inner product
\[
\langle P,Q \rangle_u = \int_\Cn P \overline{Q} e^{-2u}\,\beta_n,
\]
it becomes a Hilbert space, and one can consider the corresponding Bergman kernel function. In the local situation of functions in a bounded domain, the celebrated Demailly's theorem uses such a function for approximation of $u$ by functions with analytic singularities, converging to $u$ with control over the singularities. In the global case, we modify a construction from \cite{Berman} and consider the Bergman kernel functions $B_m$ of the Hilbert spaces $\wcP(mu+\kappa\Lambda)$ with $\Lambda(z) =\frac12\log(1+|z|^2)\in\LPSHs\Cn$, $\kappa>n$ fixed, and $m>0$. In Theorem~\ref{thm:Deminf}, we establish convergence of $u_m=\frac1{2m}\log B_{m}$ to $u$, point-wise and in $L_{loc}^1$, that controls the local singularities and the behavior at infinity (including, if $\liminf(z)/\log|z|>0$ as $|z|\to \infty$, the log canonical threshold at infinity). 

Note that in \cite{Berman} (as well as in other related papers), the corresponding Hilbert spaces were formed by polynomials $P$ satisfying, in addition to the integrability condition, the restriction $\deg P< m$. This resulted in convergence of the corresponding functions $u_m$ to the largest psh function not exceeding $\min\{\phi,\Lambda + O(1)\}$, and not to $u$; for more details, see Remark~\ref{rem:Berman}.

We also look for possible connections between the log canonical threshold at infinity and the Monge-Ampère measure, in the spirit of a relation between $c_\zeta(u)$ and the residual Monge-Ampère mass $(dd^cu)^n(\zeta)$ from \cite{D09} or a stronger inequality from \cite{DP}. 
It turns out that no universal relation between $\LCT(u)$ and the total Monge-Ampère mass $(dd^cu)^n(\Cn)$ exists: for any $N>1$, there exist toric functions $u_N,\, v_N\in\LPSHs\Cn$ such that $\LCT(u_N)=\LCT(v_N)=1$ while $(dd^cu)^n(\Cn)=2/N$ and $(dd^cv)^n(\Cn)=2N$ (Example~\ref{ex:antiD}). On the other hand, we prove a bound similar to the one from \cite{D09} for toric functions whose indicator diagrams at infinity are so-called {\it lower sets} (Proposition~\ref{prop:FMDglob}).

In this paper, we do not treat advanced subjects like the openness property, semicontinuity or restriction formula that are quite central in the local theory of log canonical thresholds. We keep this for the future, considering the topics discussed here just as first steps in the area not studied yet and that has a strong potential.  

\medskip
The presentation is organized as follows. In Section~2, we recall some preliminaries on plurisubharmonic functions of logarithmic growth and polynomial maps, toric functions and indicators. We also prove there the equality between the {\it global multiplicity} of a polynomial map $P$ with finite zero set and the total Monge-Ampère mass of $\log|P|$. In Section~3, we define the log canonical thresholds $\LCT(u)$ and $\LCTh(u)$ at infinity and present some instructive examples. In Section~4, we give a complete description of the one-dimensional case. Some elementary properties of $\LCT(u)$ are discussed in Section~5. In Section~6, we derive formulas for computation of the log canonical threshold at infinity for logarithmic indicators, and we extend them to toric plurisubharmonic functions and monomial maps (and, more generally, polynomial maps that are Newton non-degenerate at infinity) in Section~7. In Section~8, we introduce Hilbert spaces $\cP(u)$ and $\wcP(u)$ of polynomial multipliers, and we give their description for the case of toric $u$. In Section~9, we consider Bergman functions of such Hilbert spaces for establishing a new approximation of arbitrary plurisubharmonic functions of logarithmic growth by logarihms of moduli of polynomial maps, with control over its singularities and behavior at infinity (a global version Demailly's approximation theorem). In Section~10, we study relations between the log canonical threshold $\LCT(u)$ and the total Monge-Ampère mass $(dd^cu)^n(\Cn)$.

The following general notation and conventions are used:

$|z|=\left(\sum_k|z_k|^2\right)^{1/2}$ is the Euclidean norm in $\Cn$ (or $\Rn$);

$\Cn\ni z\to\infty$ means $|z|\to\infty$;

$\Rnp=\{x=(x_1,\ldots,x_n)\in\Rn:\: x_k>0, \ 1\le k\le n\}$;

$\D_R^n=\{z\in\Cn:\: |z_k|<R, \ 1\le k\le n\}$, $\D^n=\D_1^n$;


$\beta_n$ is the Euclidean volume form in $\Cn$;

$d=\partial+\bar\partial$, $d^c=(\partial-\bar\partial)/2\pi i$;

$a^+=\max\{a,0\}$; in particular, $\log^+t=\max\{\log t, 0\}$;

$\inf\emptyset=+\infty$, $\sup\emptyset=-\infty$.

\medskip{\bf Acknowledgments.} The second-named author thanks Alexander Eremenko for clarifying the subject in dimension $1$. Part of the work on the project was carried on during the first-named author's visit to the Department of Mathematics and Physics of the University of Stavanger (April 2024) and the second-named author's visits to the Institut Universitari de Matemàtica Pura i Aplicada of the Universitat Politècnica de València (November 2023 and November 2024). The authors are grateful to the host institutions for their hospitality. 

\section{Preliminaries}

\subsection{Plurisubharmonic functions of logarithmic growth and polynomial mappings}\label{sect:pshpol}

Here we recall some standard facts on plurisubharmonic (psh) functions and there relations to polynomial maps. 
For general properties of plurisubharmonic functions and the complex Monge-Ampère operators, we refer to  \cite{Dbook}, \cite{GZ}, \cite{Klimek}. There is a vast literature on functions of logarithmic growth, starting from works of Lelong, Siciak and Zakhariuta; for the recent state of the theory, see, for example, \cite{BBL}, \cite{MSSS} and references therein. The only new fact in this subsection is Proposition~\ref{prop:globDem} relating the total mass $(dd^c \log|P|)^n(\Cn)$ for a polynomial map $P$ to its global multiplicity considered in \cite{BH}.

\medskip

Let $\LPSH\Cn$ denote the class of all plurisubharmonic (psh) functions $u$ of logarithmic growth in $\Cn$: $u(z)= O(\log|z|)$  as $|z|\to\infty$;
in other words, a psh function $u$ belongs to $\LPSH\Cn$ if its {\it logarithmic type} $\sigma_u$ is finite:
\beq\label{eq:logtype}
\sigma_u:=\limsup_{z\to\infty} u(z)/\log|z|<\infty;
\eeq
equivalently,
\[
\sigma_u=\left(dd^cu\wedge(dd^c\log|z-a|)^{n-1}\right)(\Cn).
\]
One always has the bound
\beq\label{eq:logtype1}
u(z)\le\sigma_u\log^+|z|+C
\eeq
with some $C=C(u)\in\R$. (Note that the {\it Lelong class} $\mathcal L$ is the collection of all $u\in \LPSH\Cn$ with $\sigma_u\le 1$.)

We will also consider the subclass $\LPSHs\Cn$ of functions $u\in \LPSH\Cn$ such that $u(z)\to +\infty$ as $|z|\to\infty$; equivalently, $\{z: u(z)<C\}\Subset\Cn$ for any $C\in\R$, meaning that $u$ is {\it exhaustive on $\Cn$}.

We recall that the Lelong number of a psh function $u$ at a point $\zeta$ is 
\[
\nu_\zeta(u)=\liminf_{z\to \zeta}\frac{u(z)}{\log|z-\zeta|}\ge 0
\]
or, equivalently, 
\[
\nu_\zeta(u)=\left(dd^cu\wedge(dd^c\log|z-\zeta|)^{n-1}\right)(\{\zeta\}),
\]
so the logarithmic type can be viewed as the `Lelong number at infinity'.
r
A point $\zeta\in\Cn$ is a {\it singularity point} of $u$ if the $u$ is not bounded in any neighborhood of $\zeta$, a typical example being $\zeta\in u^{-1}(-\infty)$. If $\zeta$ is an isolated singularity point of $u$, then the complex Monge-Ampère operators $(dd^cu)^k$, $k\le n$, are well defined near $\zeta$, and the {\it residual Monge-Ampère mass} 
\beq\label{eq:massLNzeta}
\tau_\zeta(u):=(dd^cu)^n(\{\zeta\})\ge \nu_\zeta(u)^n.
\eeq 
If $(dd^cu)^n$ is well defined in the whole $\Cn$ (in particular, if $u\in\LPSHs\Cn$), then the {\it total Monge-Ampère mass} 
\beq\label{eq:massLNinfty}
\m_n(u):=\int_\Cn (dd^c u)^n\le \sigma_u^n.
\eeq

\medskip

The class $\LPSH\Cn$ is closed under addition and taking maximum, and it is a psh counterpart for the polynomial ring $\C[z_1,\ldots,z_n]$, model examples of $u\in\LPSH\Cn$ being $u=c\log|P|$, $c>0$, for polynomials $P$ and, more generally, polynomial maps $P:\Cn\to \C^q$. Note that, by a theorem of Siciak \cite{Si96} (see also Theorem~2.9 in Appendix~B of \cite{STot}), any $u\in\LPSH\Cn$ is the limit of a decreasing sequence of functions 
$$u_k=\max_{1\le j\le t_k} c_{jk} \log|P_{jk}|$$
for some polynomials $P_{jk}$ and constants  $0<c_{jk}\le \sigma_u/\deg(P_{jk})$. The collection of all polynomial maps $P:\Cn\to\C^q$, $q=1,2,\ldots$, will be denoted by $\wcP$, and the condition $P\in\cPs$ will mean $\log|P|\in\LPSHs\Cn$; in other words, $\cPs$ is the collection of all {\it tame} polynomial maps.

Singularity points of the psh functions $\log|P|$ are zeros of the maps $P\in\wcP$, and characteristics of their singularities are classical objects of complex analysis and commutative algebra. For example, the Lelong number $\nu_a(\log|P|)$ is the minimum of the vanishing orders (multiplicities) of the components $P_j$ of $P$, and the logarithmic type $\sigma_{\log|P|}$ is the maximum of their degrees. 

We recall that the {\it Łojasiewicz exponent at infinity} $\LL_\infty(P)$ of a polynomial map 
\(P:\mathbb{C}^{n}\rightarrow \mathbb{C}^{q}\) is the greatest real number \(l \) such that the inequality \(|P(z)|\ge C|z|^{l }\) holds for some constant \(C>0\) and all large enough \(|z|\) (see for instance \cite{Krasinski}). We extend this notion to psh functions of logarithmic growth by calling the value 
   \beq\label{eq:Le}
    \LL_\infty(u)=\liminf_{z\to\infty}\frac{u(z)}{\log|z|}
    \eeq
    {\it the  Łojasiewicz exponent of $u\in\LPSH\Cn$ at $\infty$}. Evidently, $\LL_\infty(u)\ge 0$ for any $u\in\LPSHs\Cn$, and $\LL_\infty(u)> 0$ implies $u\in\LPSHs\Cn$.

\medskip

If $\zeta$ is an isolated zero of a polynomial (or, more generally, holomorphic) map $P:\Cn\to\C^q$, $q\ge n$, then 
the residual Monge-Ampère mass $\tau_\zeta(\log|P|)$ equals the Samuel multiplicity $\m_\zeta(P)$ of the local ideal $\cI_\zeta(P)$ generated by $P_1,\ldots, P_q$ at $\zeta$ \cite[Lemma 2.1]{D09}. 
In particular, $\m_\zeta(P)$ equals the generic value of the multiplicity of the equidimensional maps $P^A:\Cn\to\C^n$ whose components $P_k^A$ are linear combinations $\sum_jA_{jk}P_j$ with $A_{jk}$ entries of constant matrices $A\in M_{p\times n}(\C)$. Note that the current $(dd^c\log|P^A|)^n$ equals the intersection current $dd^c\log|P_1^A|\wedge\ldots\wedge dd^c\log|P_n^A|$, and its mass $\m_\zeta(P^A)$ at $\zeta$ is the degree of the map $P^A$ at $\zeta$.

For a version of {\it global multiplicity} of polynomial maps, one can consider the following notion. 

\begin{definition}\label{def:globmult}{\rm \cite{BH}}
Let $P:\Cn\to\C^q$ be a polynomial map such that $P^{-1}(0)$ is finite. Its {\it global multiplicity} $\m(P)$
is the largest possible value of $\mu(P^A)$ with the maps $P^A$ as above, whenever $(P^A)^{-1}(0)$ is finite. Here, for any equidimensional polynomial map $F$ with finite $F^{-1}(0)$,
\[
\mu(F)=\dim\frac{\C[z_1,\ldots,z_n]}{\cI(F)}=\sum_{\zeta\in F^{-1}(0)}\dim\frac{\cO_\zeta}{\cI_\zeta(F)}.
\]
where $\cI(F)$ denotes the ideal of $\C[z_1,\ldots,z_n]$ generated by the components of $F$. 
\end{definition} 
For the second equality here, see e.g. \cite[p. 150]{CLS}. Note that the zero set $P^{-1}(0)$ might be empty. 

\medskip

As in the local case, we can express the global multiplicity $\m(P)$ in terms of the  Monge-Ampère measure of $\log|P|$:

\begin{proposition}\label{prop:globDem}
    For any polynomial map $P:\Cn\to\C^q$ with finite $P^{-1}(0)$, we have
\beq\label{eq:Demglobal}
\m(P)=(dd^c\log|P|)^n(\Cn).
\eeq
\end{proposition}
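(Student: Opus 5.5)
The plan is to prove \eqref{eq:Demglobal} as two inequalities. The lower bound $\m(P)\le(dd^c\log|P|)^n(\Cn)$ will come from a global comparison of total masses. The upper bound will come from a Crofton-type averaging formula, which expresses $(dd^c\log|P|)^n$ as a mean of the zero-counting currents of the equidimensional maps $P^A$.

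\emph{Step 1 (equidimensional case).} Let $F:\Cn\to\C^n$ be a polynomial map with finite $F^{-1}(0)$. Then $\log|F|$ is locally bounded and maximal on $\Cn\setminus F^{-1}(0)$. Hence $(dd^c\log|F|)^n$ is supported on $F^{-1}(0)$. By \cite[Lemma 2.1]{D09} its mass at each $\zeta$ equals the local multiplicity $\dim\cO_\zeta/\cI_\zeta(F)$. Summing over $\zeta$ and using Definition~\ref{def:globmult} gives $(dd^c\log|F|)^n(\Cn)=\mu(F)$.

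\emph{Step 2 (lower bound).} Fix $A$ with $(P^A)^{-1}(0)$ finite. Since each $P^A_k$ is a linear combination of the $P_j$, we have $\log|P^A|\le\log|P|+C$. Both functions lie in $\LPSH\Cn$, are locally bounded off finite sets, and have well-defined Monge--Ampère measures. For such functions I would use the global comparison principle: if $u\le v+C$, then $\int_\Cn(dd^cu)^n\le\int_\Cn(dd^cv)^n$. The model case is $u=0\le v=\Lambda$, where the less growing function has the smaller mass. To prove it, I would replace both functions by $\max\{\cdot,-N\}$, which does not change the total masses. I would then apply the Bedford--Taylor comparison on large balls to $u$ and $\max\{u,v+C-\epsilon\log(1+|z|)\}$. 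Combining with Step 1 gives $\mu(P^A)\le(dd^c\log|P|)^n(\Cn)$. Taking the supremum over $A$ yields $\m(P)\le(dd^c\log|P|)^n(\Cn)$.

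\emph{Step 3 (upper bound).} Write $dd^c\log|P|=\int_{S}[a\cdot P=0]\,d\sigma(a)$ as an average over the unit sphere $S\subset\C^q$. Iterate this, using the mixed Monge--Ampère currents of $\log|a_1\cdot P|,\ldots,\log|a_n\cdot P|$. For a generic tuple $A=(a_1,\ldots,a_n)$ these functions intersect properly (Bertini-type genericity, together with finiteness of $P^{-1}(0)$). The aim is the identity
\[
(dd^c\log|P|)^n=\int (dd^c\log|P^A|)^n\,d\sigma(A),
\]
with $(P^A)^{-1}(0)$ finite for almost every $A$. Integrating over $\Cn$ and applying Step 1 gives
\[
(dd^c\log|P|)^n(\Cn)=\int\mu(P^A)\,d\sigma(A)\le\m(P).
\]

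\emph{Main obstacle.} I expect Step 3 to be the hardest part. First, one must justify that the mixed products $dd^c\log|a_1\cdot P|\wedge\cdots$ equal the $n$-fold average $\int(dd^c\log|P^A|)^n d\sigma(A)$; this holds locally in the spirit of Demailly's local proof. Second, the total masses must pass through the average without mass escaping to infinity. For the second point, I would fix a large ball $B_R$ and note that all masses are uniformly bounded by $\sigma_{\log|P|}^n$, by \eqref{eq:massLNinfty}. I would then use monotone convergence in $R$. If the averaging identity proves awkward, the fallback is to show directly that $\log|P|-\log|P^A|$ is bounded outside a compact set for generic $A$, at least in a weak sense sufficient for comparison. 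Step 2 would then give equality.
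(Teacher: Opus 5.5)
\paragraph{Gap in Step 2 (the lower bound).} The comparison principle you invoke, $u\le v+C\Rightarrow\int(dd^cu)^n\le\int(dd^cv)^n$, is only available for exhaustive functions, i.e.\ functions in $\LPSHs\Cn$. This is how it is quoted in Section~10 from Taylor's theorem. But $\log|P|$ need not be exhaustive when $P^{-1}(0)$ is merely finite. Moreover, the reduction you propose for proving it fails in exactly this situation.

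Take $P=(z_1,z_1z_2+1)$ on $\C^2$. It has no zeros, so $(dd^c\log|P|)^2(\C^2)=0=\m(P)$. Now $\max\{\log|P|,-N\}$ is the pull-back of $\max\{\log|w|,-N\}$ under $P$, and $P$ maps $\{z_1\neq0\}$ biholomorphically onto $\{w_1\neq0\}$. The Monge--Amp\`ere measure of $\max\{\log|w|,-N\}$ is the normalized measure on the sphere $|w|=e^{-N}$, which does not charge $\{w_1=0\}$. Hence $(dd^c\max\{\log|P|,-N\})^2(\C^2)=1$ for every $N$; for $N=0$ this is the Remark in Section~10. So truncating from below does change total masses here.

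The Stokes-type argument on large balls with $\max\{u,v+C-\epsilon\log(1+|z|)\}$ fails for the same reason. Without growth control on $v$, that function need not coincide with $u$ or with $v$ near $\partial B_R$. Your fallback, bounded $\log|P|-\log|P^A|$ near infinity, still feeds into the same unproved comparison for non-exhaustive functions, so it does not repair the step.

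\paragraph{The missing idea.} No comparison of total masses is needed, because the Crofton identity of your Step~3 already yields equality. It gives $(dd^c\log|P|)^n(\Cn)=\int\mu(P^A)\,d\sigma(A)$. The integer-valued function $A\mapsto\mu(P^A)$ is constant, say equal to $\mu_0$, on a Zariski-open set, so the integral equals $\mu_0$.

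The function is also lower semicontinuous wherever $(P^A)^{-1}(0)$ is finite. By Rouch\'e's principle, an isolated zero of $P^{A_0}$ of multiplicity $m$ persists, for $A$ near $A_0$, as nearby zeros of total multiplicity $m$. As $A\to A_0$, zeros can only escape to infinity, never appear. Approximating any admissible $A_0$ by generic $A$ gives $\mu_0\ge\mu(P^{A_0})$, hence $\mu_0=\m(P)$.

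This is exactly the paper's proof: King's formula for $q=n$, and for $q>n$ Demailly's Crofton formula over $G_{q,n}$ together with the fact that the generic value of $\m(P^A)$ is the maximum.
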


\begin{proof}
When $q=n$, this follows from King's formula 
\[
(dd^c\log|P|)^n=\sum_{\zeta\in P^{-1}(0)} \m_\zeta(P)\,\delta_\zeta.
\]
Let $q>n$. For any point $\alpha$ in the Grassmannian $G_{q,n}$ of $n$-dimensional subspaces in $\C^q$, consider the function $u_\alpha=\log|P^A|$, where $P^A$ is the equidimensional map as above with $A$ the matrix of an orthonormal basis of $\alpha$. For all $\alpha$ in a Zariski open set of $G_{q,n}$, the set $P^{-1}_A(0)$ is finite \cite[Theorem~13.5.1]{SW}, and the Crofton type formula (2.2) of \cite{D09}
\[
(dd^c\log|P(z)|)^n=\int_{G_{q,n}} (dd^c u_\alpha(z))^n\,d\mu(\alpha),
\]
$\mu$ being the Haar measure on $G_{q,n}$, is valid for the same reasons as in \cite[Lemma 2.1]{D09}. Since $P^A$ are equidimensional, we get
\[
(dd^c\log|P|)^n(\Cn)=\int_{G_{q,n}} \m(P^A)\,d\mu(\alpha).
\]
By the continuity of the integer-valued function $\m(P^A)$, it is constant on a Zariski open set of $G_{q,n}$ and equals $\m(P)$, which gives us \eqref{eq:Demglobal}.
\end{proof}

\subsection{Logarithmic indicators and Newton diagrams}\label{ssec:ind}

 A substantial tool of this work is the notion of logarithmic indicator introduced in \cite{Rash01}. Here we recall its definition and basic properties, see  \cite{Rash01} --\cite{Rash13}. 

 \begin{definition} Given a function $u\in \LPSH{\Cn}$, its {\it logarithmic indicator} is 
\beq\label{eq:ind}
\Psi_u(z)=\lim_{R\to\infty}R^{-1}\sup\{u(\zeta):\: |\zeta_k|\le |z_k|^R, 
\ 1\le k\le n\}
\eeq
for $z\in\Cn$ with $z_1\cdot z_2\cdot\ldots z_n\ne 0$, and extended to the whole $\Cn$ by continuity. 
\end{definition}

We have $\Psi_u\in\LPSH\Cn$ and
\beq\label{eq:ind_bound}
u(z)\le\Psi_u(z)+ C, \quad z\in\Cn,
\eeq
see \cite[Theorem 2.4]{Rash01}; in particular, $\Psi_u$ belongs to $\LPSHs\Cn$ if $u$ does.

The main properties of $\Psi_u$ are
\beq\label{eq:ind_hom}
\Psi_u(z)=\Psi_u(|z_1|,\ldots|z_n|),\quad 
\Psi_u(|z|_1^c,\ldots|z_n|^c)=c\,\Psi_u(z) \ \forall c>0.
\eeq

This implies that the function 
\beq\label{eq:psiu}
\psi_u(t):=\Psi_u(e^{t_1},\ldots, e^{t_n}), \quad t\in\Rn,
\eeq
is a convex, positive homogeneous function on $\Rn$, increasing in each $t_k$; we will call it the {\it convex image} of $\Psi_u$. Therefore, it is the supporting function of a compact convex set $\Gamma_u\subset\overline\Rnp$:
\beq\label{eq:indsupp}
\psi_u(t)=h_{\Gamma_u}(t), \quad t\in\Rn;
\eeq
recall that the supporting function $h_K$ of a convex set $K\subset\Rn$ is 
\[
h_K(t)(t)=\sup\{\langle a,t\rangle:\: a\in K\}.
\]
Explicitly, 
\beq\label{eq:indicdiag}
\Gamma_u=\{a\in\Rn: \langle a,t\rangle \le \psi_u(t) \quad \forall t\in\Rn\}. 
\eeq
Since $\Psi_u(z)=\psi_u(\log|z_1|,\ldots,\log|z_n|)$, we call it the {\it logarithmic supporting function} of $\Gamma_u$, and the set $\Gamma_u$ is called the {\it indicator diagram of $u$}. 

We will also consider the set $\Gamma_{u,\infty}$, {\it the indicator diagram of $u$ of at infinity},  
\beq\label{eq:indicdiaginfty}
\Gamma_{u,\infty}=\{a\in\Rn: \langle a,t\rangle \le \psi_u^+(t) \quad \forall t\in\Rn\},
\eeq
so that
\[
\psi_u^+(t)=h_{\Gamma_{u,\infty}}(t), \quad t\in\Rn.
\]

The corresponding local object is the indicator $\Psi_{u,\zeta}(z)$ which is the restriction of the logarithmic indicator of the function $u(z+\zeta)$ to the unit polydisk $\D^n$. Its convex image $\psi_{u,\zeta}$ is the supporting function of the {\it indicator diagram $\Gamma_{u,\zeta}$ of $u$ at $\zeta$}, i.e., a (unique) closed convex set $\Gamma\subset\overline\Rnp$ with the property $\Gamma+\Rnp\subset\Gamma$ and such that  $\psi_{u,\zeta}|_{\Rnm}$ is (the restriction to $\Rnm$ of) the supporting function of $\Gamma$ \cite{Rash00}:
   \beq\label{eq:G0}
   \Gamma_{u,\zeta}=\{a\in\Rn: \langle a,t\rangle \le \psi_{u,\zeta}(t) \quad \forall t\in\Rnm\}. 
\eeq

\medskip

In the polynomial case, the above indicator diagrams are classical objects \cite{AYu}, \cite{Ku}. 
Namely, the indicator diagram  $\Gamma_{\log|P|}$ of a polynomial $P=\sum_J c_Jz^J$  is the {\it Newton polyhedron (polytope)} $\Gamma_P$ of $P$, that is, the convex hull of the set  $\supp(P)=\{J:\: c_J\neq 0\}$, the indicator diagram  $\Gamma_{\log|P|,\infty}$ is the {\it Newton polyhedron $\Gamma_{P,\infty}$ of $P$ at infinity} (or the {\it global Newton polyhedron}), i.e., the convex hull of the set $\supp(P)\cup\{0\}$, and $\Gamma_{\log|P|,0}$ (we have chosen, for simplicity, $\zeta=0$) is the {\it Newton polyhedron $\Gamma_{P,0}$ of $P$ at $0$}, the convex hull of the set 
\[
\bigcup_{J\in \supp(P)}(J+\Rnp).
\]
For a polynomial map $P=(P_1,\ldots,P_q)$, the Newton polyhedra $\Gamma_{P}$, $\Gamma_{P,\infty}$ and  $\Gamma_{P,0}$  are the convex hulls of the unions of the corresponding Newton polyhedra of the component functions of $P$.

\medskip

The indicator diagrams can be used for efficient estimates of the Monge-Ampère masses $\tau_\zeta(u)$ \cite{Rash00} and $\m_n(u)$ \cite{Rash01}, which are refined versions of \eqref{eq:massLNzeta} and \eqref{eq:massLNinfty}:
for any $u\in\LPSHs\Cn$, 
\beq\label{eq:multvola}
\tau_u(\zeta)\ge \tau_{\Psi_{u,\zeta}}(0)=n!\,{\rm Vol}(\Rnp\setminus\Gamma_{u,\zeta})
\eeq
and
\beq\label{eq:multvolglob}
\m_n(u)\le \m_n(\Psi_{u})=n!\,{\rm Vol}(\Gamma_{u,\infty}).
\eeq
For $u=\log|P|$, they give known bounds of the multiplicities of polynomial maps $P\in\cPs$ in terms of (co)volumes of the Newton polyhedra. 

\section{Log canonical thresholds: definitions and examples}

Recall that the {\it log canonical threshold}, or the {\it complex singularity exponent}, at a point $\zeta\in\Cn$ of a function $u$ plurisubharmonic in a neighborhood of $\zeta$ is 
\[
c_\zeta(u)=\sup\{c>0:\: e^{-cu}\in L^2(\zeta)\},
\]
where the condition $g\in L^2(\zeta)$ means that there exists a neighborhood $U$ of $\zeta$ such that 
\[
\int_{U} |g|^2\,\beta_n<\infty.
\]
Its reciprocal $\lambda_\zeta(u)=1/c_\zeta(u)$ is called the {\it integrability index} of $u$ at $\zeta$; see, e.g., \cite{Kiselman}, \cite{DK}. 

These values are related to the Lelong number of $u$ at $\zeta$ 
by {\it Skoda's inequalities} \cite{Sk}
\beq\label{eq:Skoda}
\frac{\nu_\zeta(u)}{n} \le \lambda_\zeta(u) \le {\nu_\zeta(u)}, \quad 
\frac1{\nu_\zeta(u)} \le c_\zeta(u) \le \frac{n}{\nu_\zeta(u)}.
\eeq
In particular, when $n=1$, we have $\lambda_\zeta(u)=\nu_\zeta(u)$ and $c_\zeta(u)=1/{\nu_\zeta(u)}$.

\begin{definition}\label{def:LCT}
The log canonical threshold of $u\in\LPSH\Cn$ at infinity is
\beq\label{eq:LCT}  
\LCT(u):=\inf\{c>0:\: e^{-cu}\in L^2(\infty)\},
\eeq
and its integrability index at $\infty$ is
\[
\lambda_u(\infty):=1/\LCT(u)=\sup\{\lambda>0:\: e^{-u/\lambda}\in L^2(\infty)\}.
\]
\end{definition}

Here, the condition $g\in L^2(\infty)$ means that there exists a compact set $K\subset\Cn$ such that 
\[
\int_{\Cn\setminus K} |g|^2\,\beta_n<\infty.
\]
For $u=\log|P|$, the integrability condition rewrites as
\[
\int_{\simk} |P|^{-2c}\,\beta_n<\infty,
\]
and we will use the denotation $\LCT(P)$ for $\LCT(\log|P|)$ in this case. 

\begin{remark}
Note that the conditions $c_\zeta(u)=\infty$ and $\LCT(u)=\infty$ have opposite meanings: while the former means $e^{-cu}\in L^2(\zeta)$ for all $c>0$, the latter is $e^{-cu}\not\in L^2(\infty)$ for all $c>0$.
\end{remark}

\begin{remark}\label{rem:ray}
      Since $u$ can be negative somewhere near $\infty$, it is not evident that the set of $c>0$ such that $e^{-cu}\in L^2(\infty)$ is a ray or empty (which is obvious if $u$ is positive on a neighborhood of $\infty$, but not in general). 
\end{remark}

To avoid the issue described in Remark~\ref{rem:ray}, one can consider 
\[
\LCTh(u):=\LCT(u^+)=\inf\{c>0:\: u^{-cu^+}\in L^2(\Cn)\}.
\]
This value is actually easier to work with than $\LCT(u)$; in particular, it is clear that the set $\{c>0:\: u^{-cu^+}\in L^2(\Cn)\}$ is a ray or empty.
We also set $\hat\lambda_\infty(u)=1/\LCTh(u)$.

Of course, $\LCTh(u)\le \LCT(u)$
for any $u\in\LPSH\Cn$, and $\LCTh(u)=\LCT(u)$ if $u$ is bounded from below near $\infty$. 
It turns out that the equality need not hold for arbitrary $u\in\LPSH\Cn$, even in the polynomial case, see Example \ref{ex:P2} below.
\medskip

When $n=1$, the case $u=\log|P|$ is quite straightforward, while for arbitrary subharmonic functions $u$ of logarithmic growth it is less trivial but still manageable; see the next section. 
When $n>1$, the situation is much more complicated even in the polynomial case. Take, for example, $P(z)=z_1$; then, by Fubini's theorem, for any $c\ge 0$,
\[
\int_{\Cn} \max\{|z_1|,1\}^{-2c}\,\beta_n=+\infty,
\]
so $\LCTh(P)=\LCT(P)=+\infty$. Similarly, $\LCTh(u)=+\infty$ if $u$ is independent of some of the variables. Furthermore,  this remains true for any monomial $z^J$: assuming $J_1\neq0$ and using polar coordinates for each $z_k$, we get, for any $c\ge 0$,  
\begin{eqnarray*}
  \int_{\C^{n}} \max\{|z^J|,1\}^{-2c}\,\beta_n & \ge & 
  \int_{\{|z^J|<1\}} \,\beta_n = 
  (2\pi)^n \int_{\{|r^J|<1\}} \prod_k r_k\, dr \\
  & = & 2^{n-1}\pi^n \prod_{k=2}^n \int_0^\infty r_k^{1-2J_k/J_1}\,dr_k=+\infty.
  \end{eqnarray*}

Of course, we have $\LCTh(u)=+\infty$ for every function $u$ such that the volume of some its sublevel set $\{u<C\}$ is infinite. As the above computations show, the monomial functions $u=\log|z^J|$ have this property. It turns out that this need not be true for general polynomials. 

\begin{example}\label{ex:P2}
    Let $P_k=z_1^k z_2(z_2^2-1)$, $z\in\C^2$; denote $\rho(z_2)= |z_2(z_2^2-1)|^{-1/k}$. Then 
    \beq\label{eq:boundvol}
        {\rm Vol}(\{|P_k|\le 1\}) =2\pi \int_\C \int_0^{\rho(z_2)}\, r\,dr \,\beta_1(z_2)
        =\pi \int_\C |z_2(z_2^2-1)|^{-2/k}.
    \eeq
    The last integral converges if $k=2$, while it diverges near $0$ and $\pm 1$ for $k=1$, and it diverges at infinity for all $k\ge 3$. We have thus established ${\rm Vol}(\{|P_k|\le 1\})=+\infty$ and, therefore,  $\LCTh(P_k)=+\infty$, for all $k\neq 2$, while ${\rm Vol}(\{|P_2|\le 1\})<+\infty$.

Furthermore, for any $c>1/2$,
\begin{eqnarray*}
    \int_{\{|P_2|>1\}} |P_2|^{-2c}\,\beta_n &=& 
  2\pi \int_{\C}|z_2(z_2^2-1)|^{-2c}\,\beta_1(z_2)\int_{\rho(z_2)}^\infty r^{1-4c}\, dr \\
  & = & \frac\pi{2c-1} \int_\C |z_2(z_2^2-1)|^{-1}\,\beta_1(z_2)<+\infty.
  \end{eqnarray*}
   Together with \eqref{eq:boundvol} this gives us, for any $c>1/2$,
   \[
   \int_{\C^{2}} \max\{|P_2|,1\}^{-2c}\,\beta_2= 
   \frac{2c\pi}{2c-1} \int_\C |z_2(z_2^2-1)|^{-1}\,\beta_1(z_2)<+\infty
   \]
   and so, $\LCTh(P_2)=\frac12$.

On the other hand, one can see that $\LCT(P)=+\infty$
for any polynomial 
$P$ which is monomial in one of the variables. Indeed, let $P(z_1,z')=z_1^kQ(z')$. 
Then we get, for any polydisk $\D_R^n$ and any $c\ge 0$,  
\[
\int_{\C^{n}\setminus \D_R^n} |P|^{-2c}\,\beta_n \ge 
\int_{\C^{n-1}\setminus \D_R^{n-1}} |Q(z')|^{-2c}\beta_{n-1}(z') \int_{\C} |z_1^k|^{-2c} \,\beta_1(z_1) =
+\infty. 
\]
In particular, for the aforementioned function $P_2$, we get $$\frac12=\LCTh(P_2)<\LCT(P_2)=+\infty.$$
\end{example}

\medskip

\begin{remark}
We do not know if, when $n>1$, 
\beq\label{eq:wholespace}
\int_{\simk} |P|^{-2c}\,\beta_n=\infty\quad \forall c\in\R
\eeq
for any polynomial $P$ (and, more generally, any polynomial map to $\C^q$, $q<n$) and any compact $K\subset\Cn$.
\end{remark}

In addition, as the next example shows, one can have $\LCTh(P)=\infty$ even for nondegenerate polynomial maps $P:\Cn\to\Cn$.

\begin{example}\label{ex:nondominating}
    Let $P:\C^2\to\C^2$ be defined as $P(z)=(z_1,z_1z_2-1)$. Since $\log|P|$ is locally bounded, $|P|^{-2c}$ is integrable on any bounded domain. We have $|P|< 2$ on the unbounded set $E=\{z:\: |z_1|<1,\ |z_1z_2-1|<1\}$. It is easy to see that the set $E$ has infinite volume:
    \begin{eqnarray*}
        {\rm Vol}(E) &=& \int_{\{z_1:\,|z_1|<1\}}
    \int_{\{z_2:\,|z_2-|z_1|^{-1}|<|z_1|^{-1}\}}\beta_1(z_2)\,\beta_1(z_1) \\
    &= & \pi \int_{\{z_1:\,|z_1|<1\}} |z_1|^{-2}\,\beta_1(z_1)
    = 2\pi^2\int_0^{1} r^{-1}\,dr=\infty.
    \end{eqnarray*}
    Therefore, for any $c>0$, 
    \[
    \int_{\C^2} \max\{|P|,2\}^{-2c}\beta_2\ge 2^{-4c} \,{\rm Vol}(E) =\infty.
    \]
\end{example}

\begin{remark}
    One could also consider convergence with respect to the weighted volume form $\beta_n^\kappa:=(|z|^2+1)^{-\kappa}\,\beta_n$ for any $\kappa>n$. Since the $\beta_n^\kappa$-measure of $\Cn$ is finite, we get then the corresponding log canonical threshold $c_{\kappa,\infty}(u^+)=0$ for any $u\in\LPSH\Cn$. In some cases, however, the measure $\beta_n^\kappa$ is quite useful, see Section~\ref{sec:appr}.
\end{remark}

\section{One-dimensional case}

For any polynomial $P:\C\to\C$, we have $|P(z)|\asymp |z|^{\deg P}$ (up to a multiplicative constant) for $|z|\gg 1$, so 
\[
\LCT(P)=\LCTh(P)=1/\deg P.
\]
The same is true for all polynomial maps $P=(P_1,\ldots,P_p): \C\to\C^p$. The case of arbitrary $u\in\LPSH\C$ is more subtle.

\medskip
Recall that for any function subharmonic in a domain $\Omega\subset\C$, its {\it Riesz measure} is $\mu_u :=\frac1{2\pi}\Delta u\ge 0$. The total Riesz mass $\mu_u(\C)$ of any $u\in\LPSH\C$ is finite; moreover,
\[
\mu_u(\C)=\sigma_u,
\]
see, e.g., \cite{HCJ}. Furthermore, any $u\in\LPSH\C$ has representation
\beq\label{eq:repr}
u(z)=C+\int_{D_R} \log|z-w|\,d\mu_u(w)+\int_{\C\setminus D_R} \log\left|1-\frac{z}{w}\right|\,d\mu_u(w), \ z\in\C,
\eeq
with any $R>0$.
If $\mu_u$ decreases at infinity so that 
$\int_{\C\setminus D_R} |\log |w||\,d\mu_u<\infty$, one has a simpler formula,
\beq\label{eq:repr1}
u(z)=C+\int_{\C} \log|z-w|\,d\mu_u(w), \quad z\in\C.
\eeq

The Riesz measure $\mu_u$ determines also other main characteristics of behavior of $u$. In particular, its charge $\mu_u(\zeta)$ at $\zeta\in\C$ equals the Lelong number $\nu_u(\zeta)$. 
By Skoda's inequalities \eqref{eq:Skoda}, we have 
\beq\label{eq:Skoda1}
c_\zeta(u)=1/\mu_u({\zeta}).
\eeq
For functions $u\in\LPSH\C$, the corresponding relation \beq\label{eq:n1} 
\LCT(u)=1/\sigma_u
\eeq
can be deduced from Cartan's lemma (\cite[Lemma 6.17]{HCJ} or Theorem 3 in Section 11.2 of \cite{Levin}) stating that logarithmic potentials \eqref{eq:repr1} are asymptotically close to $\sigma_u\log|z|$, outside an exceptional set. Since Cartan's lemma is quite an advanced tool, we present here a more elementary proof of \eqref{eq:n1} due to Alexander Eremenko (private communication).

\begin{theorem}\label{thm:C}
   For any $u\in\LPSH\C$, $\LCT(u)=\LCTh(u)=1/\sigma_u$. Moreover, for any nonconstant $u\in\LPSH\C$, 
   \[\label{eq:convn=1}
   \{c>0:\: e^{-cu}\in L^2(\infty)\}=\{c>0:\: e^{-cu^+}\in L^2(\infty)\}=(1/\sigma_u,+\infty).
   \]
\end{theorem}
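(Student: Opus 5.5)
The plan is to reduce everything to two one-sided estimates, using that $e^{-cu}$ and $e^{-cu^+}=\min\{1,e^{-cu}\}$ are comparable in the right directions. Since $e^{-cu^+}\le e^{-cu}$ everywhere, convergence of $\int e^{-2cu}$ near infinity implies convergence for $u^+$. Conversely, divergence for $u^+$ implies divergence for $u$. So it suffices to prove two things for nonconstant $u$ (for which $\sigma_u>0$, since by Liouville's theorem a subharmonic function with $u\le o(\log|z|)$ is constant):
\begin{itemize}
\item[(a)] $\int_{|z|>R}e^{-2cu^+}\beta_1=\infty$ for all $c\le 1/\sigma_u$;
\item[(b)] $\int_{|z|>R}e^{-2cu}\beta_1<\infty$ for all $c>1/\sigma_u$ and suitable $R$.
\end{itemize}
Together these give the description of both sets as $(1/\sigma_u,+\infty)$, and hence $\LCT(u)=\LCTh(u)=1/\sigma_u$. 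For constant $u$ both sets are empty and $\sigma_u=0$, consistent with the convention $1/0=\infty$.

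Part (a) is immediate from \eqref{eq:logtype1}. For $|z|\ge1$ we have $u^+(z)\le \sigma_u\log|z|+C^+$, so $e^{-2cu^+}\ge C'|z|^{-2c\sigma_u}$. Since $2c\sigma_u\le 2$, this is not integrable near infinity in $\C$.

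For (b), fix $c>1/\sigma_u$ and write $\sigma=\sigma_u=\mu_u(\C)$. Choose $\varepsilon>0$ small enough that $2c(\sigma-\varepsilon)>2$ and $2c\varepsilon<2$. Then choose $R$ with $\mu_u(\C\setminus D_R)=\varepsilon'\le\varepsilon$. Using \eqref{eq:repr} with this $R$, write $u=C+u_1+u_2$, where $u_1$ is the potential of $\mu_1=\mu_u|_{D_R}$ and $u_2(z)=\int_{|w|\ge R}\log|1-z/w|\,d\mu_u(w)$. For $|z|>2R$ and $|w|<R$ we have $|z-w|\ge|z|/2$, hence $u_1(z)\ge(\sigma-\varepsilon')\log|z|-C_1$. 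This gives
\[
e^{-2cu(z)}\le C_2|z|^{-2c(\sigma-\varepsilon')}e^{-2cu_2(z)},\qquad |z|>2R.
\]
Assuming $\varepsilon'>0$ (otherwise $u_2\equiv0$ and we are done), apply Jensen's inequality for the convex function $\exp$ with the probability measure $\nu=\mu_u|_{\C\setminus D_R}/\varepsilon'$:
\[
e^{-2cu_2(z)}\le\int|1-z/w|^{-2c\varepsilon'}\,d\nu(w).
\]
By Fubini, it then suffices to show that
\[
I(w)=\int_{|z|>2R}|z|^{-2c(\sigma-\varepsilon')}\,|1-z/w|^{-2c\varepsilon'}\,\beta_1(z)
\]
is bounded uniformly in $|w|\ge R$.

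This uniform bound is the main step, and I would split the $z$-plane into three regions. Set $p=2c(\sigma-\varepsilon')>2$ and $q=2c\varepsilon'<2$.
\begin{itemize}
\item Where $|z|\le|w|/2$, we have $|1-z/w|\ge1/2$. The integrand is at most $2^{q}|z|^{-p}$, which is integrable on $|z|>2R$.
\item Where $|z|\ge2|w|$, we have $|1-z/w|\ge1$. The integrand is at most $|z|^{-p}$.
\item On the annulus $|w|/2<|z|<2|w|$, substitute $z=w\zeta$. The integral becomes $|w|^{2-p}\int_{1/2<|\zeta|<2}|\zeta|^{-p}|1-\zeta|^{-q}\beta_1(\zeta)$. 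This is finite because $q<2$, and $|w|^{2-p}\le R^{2-p}$ because $p>2$.
\end{itemize}
Hence $\sup_w I(w)<\infty$, and $\int_{|z|>2R}e^{-2cu}\beta_1<\infty$. The delicate points are choosing $\varepsilon$ so that simultaneously $p>2$ (integrability at infinity) and $q<2$ (local integrability near the far charges). The Jensen step is also essential: it linearizes the exponential of the far potential, which is what lets us avoid Cartan's lemma.
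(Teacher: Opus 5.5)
Your proof is correct and follows essentially the same route as the paper's (Eremenko's) argument: you split the Riesz measure via \eqref{eq:repr} into a near part, whose potential is bounded below by $(\sigma-\varepsilon')\log|z|-C$, and a small far part, which you control by Jensen's inequality and Fubini, reducing everything to a uniform bound on the inner integral over $z$. The differences are minor. You split that inner integral into three regions, rescaling $z=w\zeta$ on the annulus, where the paper uses the disk $|z-w|<|w|/2$ and its complement. Your requirement $2c\varepsilon'<2$ is the sharp form of the paper's stronger choice $s_2<1$. You also spell out the comparison between $e^{-cu}$ and $e^{-cu^+}$ explicitly.
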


\begin{proof}
    Assume $\sigma_u>0$ (otherwise $u$ is constant and the equalities are obvious). Then 
   $    \LCT(u)\ge\LCTh(u)\ge 1/\sigma_u $
    because $u^+\le \sigma_u\log|z|+O(1)$ as $z\to \infty$ and $\LCT(\log|z|)=\LCTh(\log|z|)=1$, so we need to prove $e^{-cu}\in L^2(\infty)$ for any $c>1/\sigma_u$ but not for $c=1/\sigma_u$. The latter statement is trivial since $u\le \sigma_u \log^+|z|+C$ and $e^{- \log|z|^+}\not\in L^2(\infty)$, so we concentrate here on proving the former. 
    
    It suffices to prove the following claim: {\sl If $\sigma_u>2$, then there exists $R>0$ such that} 
    \[
    \int_{|z|>R} e^{-u}\beta_1<\infty.
    \]
    
    \medskip
    In the representation \eqref{eq:repr}, we can assume $C=0$. 
    If the support of the Riesz measure $\mu_u$ is contained in some disk $D_R$, then from \eqref{eq:repr} we get for all $z$ with $|z|>2R$, 
    \beq\label{eq:J1}
    u(z)\ge \sigma_u \log(|z|-R) \ge \sigma_u(\log|z| - \log2)
    \eeq
and hence, as $\sigma_u>2$,
\[
    \int_{|z|>2R} e^{-u}\beta_1 \le 
    \frac{2^{1+\sigma_u}}\pi  \int_{2R}^\infty r^{1-\sigma_u}\,dr<\infty.
    \]

In the general case, choose $R>0$ such that $s_1:=\mu_u(D_R)>2$ and $s_2:=\mu_u(\C\setminus D_R)<1$. In representation \eqref{eq:repr}, denote the first integral by $J_1(z)$ and the second one by $J_2(z)$. Then we have, as in \eqref{eq:J1},
\beq\label{eq:J12}
J_1(z)\ge s_1(\log|z| - \log2)
    \eeq
for all $z$ with $|z|>2R$.

Denote by $\tilde\mu$ the restriction of the measure $s_2^{-2}\mu_u$ to $\C\setminus D_R$. since $\tilde\mu$ is a probability measure on $\C\setminus D_R$, Jensen's inequality gives us
\[
e^{-J_2(z)}= \exp\left\{-\int_{\C\setminus D_R} s_2\log\left|1-\frac{z}{w}\right|d\tilde\mu(w)\right\}
\le \int_{\C\setminus D_R} \left(\frac{|w|}{|w-z|}
\right)^{s_2}d\tilde\mu(w).
\]
  Using Fubini's theorem and the bound \eqref{eq:J12}, we get
  \begin{eqnarray*}
  \int_{\C\setminus D_{2R}} e^{-u(z)}\beta_1(z)
  &\le & \int_{\C\setminus D_R} \int_{\C\setminus D_{2R}}  \left(\frac{|w|}{|w-z|}
\right)^{s_2} e^{-J_1(z)}\beta_1(z) d\tilde\mu(w)
\\
 &\le & \int_{\C\setminus D_R} \int_{\C\setminus D_{2R}}  \left(\frac{|w|}{|w-z|}
\right)^{s_2} 2^{s_1} |z|^{-s_1}\beta_1(z) d\tilde\mu(w).
  \end{eqnarray*}
  
  For any fixed $w\in \C\setminus D_R$, the inner integral, $I(w)$, splits into the some of the  integral $I_1(w)$ inside the disk $\{z: |z-w|<\frac12 |w|\}$ and $I_2(w)$ outside it. We have
  \[
  I_1(w)\le 2\pi |w|^{s_2} \int_0^{\frac12|w|} 
  r^{1-s_2}|w|^{-s_1} dr
  \le C_1 |w|^{2-s_1}\le C_1 R^{2-s_1}
  \]
(recall that $s_1>2$) and
\[
I_2(w)\le \int_{\C\setminus D_{2R}} 2^{s_2-s_1} |z|^{-s_1}\beta_1(z)= C_2 R^{2-s_1}.
\]
 Therefore,
 \[
 \int_{\C\setminus D_{2R}} e^{-u(z)}\beta_1(z) \le \int_{\C\setminus D_R} I(w) d\tilde\mu(w) 
 \le (C_1+C_2) R^{2-s_1} <\infty,
 \]
which completes the proof. 
\end{proof}

\begin{remark}\label{rem:ray1}
   The fact that the set $\{c>0:\: e^{-cu}\in L^2(\infty)\}$ is a ray, is not obvious; cf. Remark~\ref{rem:ray}.  \end{remark}

As a consequence of Theorem~\ref{thm:C}, we have

\begin{corollary}
    A function $u\in\LPSH\C$ satisfies 
    \beq\label{eq:allc}
    \int_\C e^{-2cu}\beta_1=\infty \quad \forall c>0
    \eeq
    if and only if $u(z)=s+t\log|z-\zeta|$ for some $s\in\R$, $t\ge 0$ and $\zeta\in\C$.
\end{corollary}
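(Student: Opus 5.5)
The ``if'' direction is a direct computation; for the ``only if'' direction I will produce, for every $u$ not of the stated form, a single $c>0$ with $e^{-2cu}$ integrable both near infinity (via Theorem~\ref{thm:C}) and on every compact set (via \eqref{eq:Skoda1}).

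\emph{The ``if'' direction.} Let $u=s+t\log|z-\zeta|$. If $t=0$, then $e^{-2cu}$ is a positive constant, so its integral over $\C$ is infinite. If $t>0$, then $e^{-2cu}=e^{-2cs}|z-\zeta|^{-2ct}$. In polar coordinates centred at $\zeta$, integrability near $\zeta$ requires $2ct<2$, while integrability near infinity requires $2ct>2$. These never hold simultaneously, so \eqref{eq:allc} holds.

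\emph{The ``only if'' direction.} Assume $u\in\LPSH\C$ satisfies \eqref{eq:allc}. If $u$ is constant, it has the required form with $t=0$, so assume $\sigma_u>0$.

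Let $\mu_u$ be the Riesz measure, so $\mu_u(\C)=\sigma_u$. The atoms of $\mu_u$ have total mass at most $\sigma_u$, so only finitely many exceed any given $\varepsilon>0$. Hence $m:=\max_{\zeta}\mu_u(\{\zeta\})$ is attained (set $m=0$ if there are no atoms), and $m\le\sigma_u$. I then split into two cases.

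\emph{Case $m<\sigma_u$.} Choose $c$ with $1/\sigma_u<c<1/m$, reading $1/0=+\infty$.
\begin{itemize}
\item Near infinity: by Theorem~\ref{thm:C}, $e^{-cu}\in L^2(\infty)$. So there is $R$ with $\int_{|z|>R}e^{-2cu}\beta_1<\infty$.
\item Locally: for each $\zeta\in\C$, \eqref{eq:Skoda1} gives $c_\zeta(u)=1/\mu_u(\{\zeta\})\ge 1/m>c$, with $c_\zeta(u)=+\infty$ when $\zeta$ is not an atom.
\end{itemize}
The local step needs one small check. The set of $c'$ with $e^{-c'u}\in L^2(\zeta)$ is an interval starting at $0$, because $u$ is bounded above near $\zeta$ and so $e^{-c'u}\le C e^{-c''u}$ for $c'<c''$. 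Hence $c<c_\zeta(u)$ gives $e^{-cu}\in L^2(\zeta)$, i.e. $e^{-2cu}$ is integrable on some neighbourhood $U_\zeta$ of $\zeta$.

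Cover the closed disk $\overline D_R$ by finitely many such $U_\zeta$. Then $\int_\C e^{-2cu}\beta_1<\infty$, contradicting \eqref{eq:allc}.

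\emph{Case $m=\sigma_u$.} Then $\mu_u=\sigma_u\delta_\zeta$ for some $\zeta$. This measure is compactly supported, so representation \eqref{eq:repr1} applies and gives $u(z)=C+\sigma_u\log|z-\zeta|$, which is of the required form.

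\emph{Main obstacle.} The only delicate point is the uniform local integrability on the compact disk. It relies on the sharp one-dimensional equality \eqref{eq:Skoda1}, together with the fact that atoms of mass close to $\sigma_u$ are finitely many, so that the maximum $m$ is attained and is strictly below $\sigma_u$ unless $\mu_u$ is a single point mass.
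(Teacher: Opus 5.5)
Your proof is correct and follows the same route as the paper. You split $\int_\C e^{-2cu}\beta_1$ into a part near infinity handled by Theorem~\ref{thm:C} and a compact part handled by the one-dimensional equality $c_\zeta(u)=1/\mu_u(\{\zeta\})$, and the key dichotomy is whether a single atom of $\mu_u$ carries the full mass $\sigma_u$. Your use of the global maximal atom mass $m$, instead of the paper's disk-dependent $M_u(R)$, is a small improvement: it lets you fix $c$ before choosing $R$, which avoids the interplay between $c$, $R'$ and $M_u(R)$ in the paper's write-up, and you also spell out the downward-closedness and compactness steps that the paper leaves implicit.
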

\begin{proof}
    The `if' direction being trivial, let us prove the reverse. Let a non-constant $u\in\LPSH\C$ satisfy \eqref{eq:allc}. For each $c>0$ and any $R>0$, we have
    \[
    \int_\C e^{-2cu}\beta_1=\int_{|z|<R} e^{-2cu}\beta_1 + \int_{|z|>R} e^{-2cu}\beta_1 =I_1(c,R)+I_2(c,R) =\infty.
    \]
    For any $c\le \sigma_u^{-1}$ and all $R>0$, we have $I_2(c,R)=\infty$. If $c> \sigma_u^{-1}$, we can find $R'>R$ such that $I_2(c,R')<\infty$. Then $I_1(c,R')$ must be infinite, which implies $c> M_u^{-1}(R')$, where $M_u(R)=\sup\{\mu_u(\zeta): |\zeta|\le R\}$. 
    
    We have $\sigma_u=\mu_u(\C)\ge M_u(R)$ for any $\zeta\in \C$, the equality being possible only if $u$ is harmonic outside the point $\zeta$, in which case $u(z)=s+\sigma_u\log|z-\zeta|$ for some $s\in\R$. Otherwise, $\sigma_u>M_u(R)$, so there exists $c'\in (\sigma_u^{-1}, M_u^{-1}(R))$ and hence, both $I_1(c',R')$ and $I_2(c',R') $ are finite, which contradicts \eqref{eq:allc}. 
    \end{proof}

Another simple consequence, in view of \eqref{eq:Skoda1}, is 

\begin{corollary}\label{cor:locglob1}
    For any $u\in\LPSH\C$, $\LCT(u)\le \inf
    \{c_\zeta(u):\: \zeta\in\C\}$.
\end{corollary}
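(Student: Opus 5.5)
By Theorem~\ref{thm:C} we have $\LCT(u)=1/\sigma_u$. By \eqref{eq:Skoda1} we have $c_\zeta(u)=1/\mu_u(\{\zeta\})$ for every $\zeta\in\C$. So the statement reduces to the inequality
\[
\mu_u(\{\zeta\})\le\sigma_u\qquad\text{for all } \zeta\in\C .
\]
This holds because the Riesz measure is nonnegative, so the mass of a single point cannot exceed the total mass: $\mu_u(\{\zeta\})\le \mu_u(\C)=\sigma_u$. Taking reciprocals gives $c_\zeta(u)\ge 1/\sigma_u=\LCT(u)$ for each $\zeta$, and passing to the infimum over $\zeta$ gives the claim.

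Two degenerate cases need a word.

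If $\sigma_u=0$, then $u$ is constant. Then $\LCT(u)=\infty$ with the convention of Theorem~\ref{thm:C}, read as $1/0$. Every $c_\zeta(u)=\infty$ as well, so the inequality $\infty\le\infty$ holds trivially.

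If $\mu_u(\{\zeta\})=0$ for some $\zeta$, then $c_\zeta(u)=+\infty$. That point imposes no constraint on the infimum.

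There is no real obstacle here. The only care needed is to apply the conventions for $1/0$ consistently, and to note that \eqref{eq:Skoda1} is exactly the one-dimensional equality case of Skoda's inequalities \eqref{eq:Skoda}, using $\nu_\zeta(u)=\mu_u(\{\zeta\})$.
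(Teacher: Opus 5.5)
Your proof is correct and is the argument the paper intends: it combines $\LCT(u)=1/\sigma_u$ from Theorem~\ref{thm:C} with $c_\zeta(u)=1/\mu_u(\{\zeta\})$ from \eqref{eq:Skoda1} and the bound $\mu_u(\{\zeta\})\le\mu_u(\C)=\sigma_u$. Your treatment of the degenerate cases $\sigma_u=0$ and $\mu_u(\{\zeta\})=0$ is also fine.
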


\section{Elementary properties and comparison to other characteristics}

\medskip

As we saw in the previous section, dealing with functions not increasing to infinity requires additional efforts. In the one-dimension case, this was achieved by using their representation as potentials. In several variables, we do not have such a tool, so handling of such `bad' functions is more complicated. We leave this problem for a future treatment, concentrating here mainly on functions $u\in\LPSHs\Cn$ and polynomial maps $P\in\cPs$ (i.e., functions tending to $+\infty$ at infinity).
In particular, 
$ \LCTh(u)=\LCT(u)$ for all $u\in\LPSHs\Cn$. This -- as well as some other results below -- is also true for functions bounded from below near infinity, however we will not explore this issue here.

\medskip
For any $u,v\in\LPSH\Cn$ we have, obviously, the relations
\beq\label{eq:order}
u\le v+ O(1)\Rightarrow \LCT(u)\ge \LCT(v),\  \LCTh(u)\ge \LCTh(v), 
\eeq
\[
\LCT(tu)=t^{-1}\LCT(u) \quad \forall t>0,
\]
and $\LCT(C)=\LCTh(C)=+\infty$ for any constant $C$.

For functions of the class $\LPSHs\Cn$, we have, in addition, 

\begin{proposition}\label{prop:comparison} 
\begin{enumerate}
\item[(i)]  $\LCT(u+v)\le \LCT(\max\{u,v\})\le \min\{\LCT(u),\LCT(v)\}$ for any $u,v\in\LPSHs\Cn$;
\item[(ii)] if $u\in \LPSH\Cn$ and $v\in \LPSHs\Cn$ are such that 
\[ \limsup_{z\to\infty}\frac{u(z)}{v(z)}\le \sigma<\infty,
    \]
    then $\LCT(u)\ge \sigma^{-1}\LCT(v)$. 
\end{enumerate}
\end{proposition}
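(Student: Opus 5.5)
Both parts reduce to the monotonicity \eqref{eq:order} and the scaling rule $\LCT(tu)=t^{-1}\LCT(u)$, applied near infinity, where functions in $\LPSHs\Cn$ are positive.

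For (i), take $u,v\in\LPSHs\Cn$. Both functions tend to $+\infty$, so there is $R$ with $u,v\ge 0$ for $|z|>R$. On that region
\[
\max\{u,v\}\le u+v .
\]
Since only the behavior outside a compact set matters for $L^2(\infty)$, the argument behind \eqref{eq:order} applies verbatim and gives $\LCT(u+v)\le\LCT(\max\{u,v\})$. The function $\max\{u,v\}$ is also in $\LPSHs\Cn$. From $u\le\max\{u,v\}$ and $v\le\max\{u,v\}$ we get, again by \eqref{eq:order}, that $\LCT(\max\{u,v\})\le\LCT(u)$ and $\LCT(\max\{u,v\})\le\LCT(v)$, which is the second inequality.

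For (ii), fix $\epsilon>0$. Because $v\to+\infty$, the $\limsup$ hypothesis yields a compact $K$ such that, outside $K$, $v>0$ and $u\le(\sigma+\epsilon)v$. Now take any $c>\LCT(v)/(\sigma+\epsilon)$ and set $c'=c(\sigma+\epsilon)>\LCT(v)$.

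The main obstacle is that for $v\in\LPSHs\Cn$ we need $e^{-c'v}\in L^2(\infty)$ for every $c'>\LCT(v)$, not just for some $c'$ near the infimum; this is the ray issue of Remark~\ref{rem:ray}. Here it is harmless. Since $v>0$ near infinity, $e^{-c'v}$ is pointwise decreasing in $c'$ there. Hence membership in $L^2(\infty)$ for some $c_0\in(\LCT(v),c']$ implies it for $c'$.

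With that settled, on $\Cn\setminus K$ we have
\[
e^{-cu}\ge e^{-c'v}.
\]
This inequality goes the wrong way for proving integrability of $e^{-cu}$. It does show that $c$ is \emph{not} in the integrability set when $c'$ is not, but that is not what we need either. What it does say is that $u$ is dominated by $(\sigma+\epsilon)v$, and \eqref{eq:order} then gives the comparison in the correct direction: $\LCT(u)\ge\LCT((\sigma+\epsilon)v)=(\sigma+\epsilon)^{-1}\LCT(v)$.

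Concretely, suppose $e^{-cu}\in L^2(\infty)$. Then $e^{-c(\sigma+\epsilon)v}\le e^{-cu}$ outside $K$ gives $e^{-c(\sigma+\epsilon)v}\in L^2(\infty)$, so $c(\sigma+\epsilon)\ge\LCT(v)$. This holds for every admissible $c$, and the infimum over such $c$ is $\LCT(u)$, so $\LCT(u)\ge\LCT(v)/(\sigma+\epsilon)$. Letting $\epsilon\to0$ gives $\LCT(u)\ge\sigma^{-1}\LCT(v)$.

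In this direction no ray property is needed. The one point to be careful about is the case $\sigma\le0$ together with the sign requirement. If $\sigma\le 0$, then $u\le\epsilon v$ for every $\epsilon>0$, so $\LCT(u)\ge\epsilon^{-1}\LCT(v)$ for all $\epsilon$, which forces $\LCT(u)=\infty$ whenever $\LCT(v)>0$. I would treat $\sigma>0$ as the main case and note this degenerate case separately.
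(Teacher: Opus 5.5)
Your proof is correct and is essentially the argument the paper intends: the paper gives no separate proof, treating both parts as immediate from the monotonicity \eqref{eq:order} (which needs no ray property, since $\LCT$ is defined as an infimum), the scaling $\LCT(tu)=t^{-1}\LCT(u)$, and the positivity of functions from $\LPSHs\Cn$ near infinity. The paragraph about Remark~\ref{rem:ray} and the wrong-way inequality $e^{-cu}\ge e^{-c'v}$ is a detour you can delete, since your ``Concretely'' paragraph already proves (ii) on its own.
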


As an immediate consequence of Proposition~\ref{prop:comparison}(ii) and the elementary computation 
    $\LCT(\log|z|)=n$, we get
\begin{corollary}\label{cor:logtype}
    For any $u\in\LPSHs\Cn$, we have 
    \[
    0<\frac{n}{\sigma_u}\le \LCT(u)\le \frac{n}{\LL_\infty(u)}\le +\infty,
    \]
        where $\sigma_u$ is the logarithmic type \eqref{eq:logtype} of $u$ and  \( \LL_\infty(u)\ge 0 \)
        is the  Łojasiewicz exponent \eqref{eq:Le} of $u$ at $\infty$.
        In particular,
        \[
    \frac{n}{\deg P}\le \LCT(P)\le \frac{n}{\LL_\infty(P)},
    \]
    for any map $P\in\cPs$.
\end{corollary}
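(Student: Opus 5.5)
The plan is to derive both inequalities from Proposition~\ref{prop:comparison}(ii), comparing $u$ with $v=\log|z|$ in each direction. First I check that $\LCT(\log|z|)=n$. In polar coordinates,
\[
\int_{|z|>1}|z|^{-2c}\beta_n = C_n\int_1^\infty r^{2n-1-2c}\,dr,
\]
which is finite exactly when $c>n$. Since $\log|z|\in\LPSHs\Cn$, the infimum in \eqref{eq:LCT} is therefore $n$.

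For the lower bound, take $v=\log|z|$. From \eqref{eq:logtype} we have $\limsup_{z\to\infty}u(z)/\log|z|=\sigma_u$. Proposition~\ref{prop:comparison}(ii), applied with $\sigma=\sigma_u$, gives $\LCT(u)\ge n/\sigma_u$. We need $\sigma_u>0$ here, since $\sigma_u=0$ would make $n/\sigma_u=+\infty$. This holds because $u\in\LPSHs\Cn$ is exhaustive and hence nonconstant. A psh function with $\sigma_u=0$ is bounded above by \eqref{eq:logtype1}, so it is constant by Liouville's theorem, which contradicts $u\to+\infty$. So $0<n/\sigma_u$, and the finiteness of $\sigma_u$ holds by definition of $\LPSH\Cn$.

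For the upper bound, reverse the roles: take $u_0=\log|z|\in\LPSH\Cn$ as the first function and our $u\in\LPSHs\Cn$ as the exhaustive comparison function. If $\LL_\infty(u)=0$, the bound $n/\LL_\infty(u)=+\infty$ is trivial. Otherwise,
\[
\limsup_{z\to\infty}\frac{\log|z|}{u(z)}=\frac{1}{\liminf_{z\to\infty}u(z)/\log|z|}=\frac1{\LL_\infty(u)},
\]
where both functions are eventually positive. Proposition~\ref{prop:comparison}(ii) then gives $n=\LCT(\log|z|)\ge \LL_\infty(u)\,\LCT(u)$, that is, $\LCT(u)\le n/\LL_\infty(u)$. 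The possibility $\LL_\infty(u)=+\infty$ is excluded because $\LL_\infty(u)\le\sigma_u<\infty$.

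The polynomial statement follows by setting $u=\log|P|$ for $P\in\cPs$. In that case $\sigma_{\log|P|}=\deg P$, the maximal degree of the components, as recalled in Section~\ref{sect:pshpol}. Also, $\LL_\infty(\log|P|)$ coincides with the classical Łojasiewicz exponent $\LL_\infty(P)$, since the liminf of $\log|P|/\log|z|$ is exactly the supremal exponent $l$ with $|P|\ge C|z|^l$ near infinity, up to the usual $\varepsilon$-argument. The only delicate point in the whole argument is making sure the hypotheses of (ii) hold in both applications, in particular the exhaustivity of the second function and the edge cases $\LL_\infty(u)=0$ and $\sigma_u>0$; beyond that the proof is routine.
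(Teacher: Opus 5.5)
Your proposal is correct and follows the paper's own route: the corollary is obtained there as an immediate consequence of Proposition~\ref{prop:comparison}(ii) combined with the computation $\LCT(\log|z|)=n$. Like the paper, you apply (ii) once with $v=\log|z|$ to get the lower bound and once with the roles reversed to get the upper bound. Your handling of the edge cases $\sigma_u>0$ and $\LL_\infty(u)=0$, and your identification of $\sigma_{\log|P|}$ and $\LL_\infty(\log|P|)$ in the polynomial case, fill in the details that the paper leaves implicit.
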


\begin{remark}
    The lower bounds
    \beq\label{eq:lboundtype}
    \LCT(u)\ge\LCTh(u)\ge \frac{n}{\sigma_u}
    \eeq
    are true for any $u\in\LPSH\Cn$. More precisely, $e^{-u^+}\not\in L^2(\Cn)$ if $\sigma_u\le n$.
\end{remark}

\begin{remark}
    By Skoda's inequalities \eqref{eq:Skoda}, the integrability index $\lambda_\zeta(u)$ at any point $\zeta\in\Cn$ is comparable to the Lelong number $\nu_\zeta(u)$. At infinity, the role of Lelong number is played by the logarithmic type $\sigma_u$. By Corollary~\ref{cor:logtype}, we always have the upper bound $\lambda_\infty(u)\le \sigma_u/n$, however there is no uniform  bound $\lambda_\infty(u)\ge C \sigma_u$ even in the class $\LPSHs\Cn$. Indeed, for $v_N(z)=\log\max\{|z_1|,|z_2^N|,|z_1z_2^N|,1\}$, we have $\lambda_\infty(v_N)=1$ (which follows, for example, from results of Section 6) and $\sigma_{v_N}=N+1$.
\end{remark}

\begin{remark}
    We do not know if $\LCT(u)<\infty$ for any $u\in\LPSHs\Cn$, while this is definitely so if $u$ has positive  Łojasiewicz exponent at $\infty$.
\end{remark}

We can also compare $\LCT(u)$ with that of its {logarithmic indicator} $\Psi_u$ \eqref{eq:ind}.  In view of \eqref{eq:ind_bound} and \eqref{eq:order}, we have

\begin{corollary}\label{cor:comp_ind}
        $\LCT(u)\ge \LCT(\Psi_u)$ for any $u\in\LPSHs\Cn$.
\end{corollary}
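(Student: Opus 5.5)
The claim follows directly from \eqref{eq:ind_bound} combined with the monotonicity \eqref{eq:order}. I will simply check that the hypotheses match, including the class membership needed for the two thresholds to be comparable.

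First, by \cite[Theorem 2.4]{Rash01}, i.e. \eqref{eq:ind_bound}, any $u\in\LPSH\Cn$ satisfies $u\le \Psi_u+C$ on $\Cn$ for some constant $C$. Since $\Psi_u\in\LPSH\Cn$, the implication \eqref{eq:order} applies with $v=\Psi_u$ and gives $\LCT(u)\ge\LCT(\Psi_u)$.

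It is worth spelling out why \eqref{eq:order} holds here, because of the issue raised in Remark~\ref{rem:ray}. Fix $c>0$ with $e^{-c\Psi_u}\in L^2(\infty)$. Then $e^{-2cu}\ge e^{-2cC}e^{-2c\Psi_u}$ pointwise, so $\int_{\simk}e^{-2cu}\beta_n<\infty$ implies $\int_{\simk}e^{-2c\Psi_u}\beta_n<\infty$. In other words, $\{c:\ e^{-cu}\in L^2(\infty)\}\subset\{c:\ e^{-c\Psi_u}\in L^2(\infty)\}$. Taking infima reverses nothing: the infimum over the smaller set is at least the infimum over the larger one, which is $\LCT(u)\ge\LCT(\Psi_u)$. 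This inclusion of sets needs no ray property, so Remark~\ref{rem:ray} causes no difficulty.

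Finally, the hypothesis $u\in\LPSHs\Cn$ guarantees $\Psi_u\in\LPSHs\Cn$, as noted after \eqref{eq:ind_bound}. Hence $\LCT=\LCTh$ for both functions, and the inequality holds equally for $\LCTh$. There is no real obstacle; the only point requiring care is the direction of the inclusion between the two sets of admissible exponents.
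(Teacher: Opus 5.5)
Your proof is correct and is the same as the paper's, which derives the corollary in one line from the global bound $u\le\Psi_u+C$ in \eqref{eq:ind_bound} and the monotonicity \eqref{eq:order}; your check of \eqref{eq:order} via the inclusion $\{c:\ e^{-cu}\in L^2(\infty)\}\subset\{c:\ e^{-c\Psi_u}\in L^2(\infty)\}$ is right and indeed needs no ray property. One small slip: in that paragraph you should fix $c$ with $e^{-cu}\in L^2(\infty)$ (not $e^{-c\Psi_u}\in L^2(\infty)$), since the implication you then prove runs from $u$ to $\Psi_u$.
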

To make this useful, we need to compute $ \LCT(\Psi_u)$, which we will do in the next section.

\section{LCT of indicators}
Logarithmic indicators $\Psi_u$ of $u\in\LPSH\Cn$ (see Section~\ref{ssec:ind}) have nice properties that allow an easy computation of their characteristics like the Lelong numbers, Monge-Ampère measures, log canonical thresholds at $0$, etc., see \cite{Rash01, Rash03, Rash17}. Here we will show how to compute their log canonical thresholds at $\infty$ in terms of the indicator diagrams $\Gamma_{u,\infty}$. 

\medskip
Let $\Psi\in\LPSHs\Cn$ be an abstract {\it indicator}, that is, a psh function satisfying (\ref{eq:ind_hom}), and let $\psi$ be the corresponding convex function on $\Rn$. For such a  $\Psi$, we have $\{z\in\Cn:\: \Psi(z)<0\}$ is either $\D^n$ (the unit polydisk) or  empty, while $\{z\in\Cn:\: \Psi(z)>0\}=\Cn\setminus {\overline\D^n}$. Equivalently,  $\{t\in\Rn:\: \psi(t)<0\}=\Rn_-$ or empty, and $\{t\in\Rn:\: \psi(t)>0\}=\Rn\setminus {\overline\Rnm}$. Indeed, if $\psi(t^0)\le 0$ for some $t^0\not \in {\overline\Rnm}$, then $\psi(ct^0)\not\to +\infty$ as $c\to +\infty$.

Then $\Psi=\Psi^+:=\max\{\Psi,0\}$ on $\Cn\setminus\D^n$, $\LCT(\Psi)=\LCT(\Psi^+)$, and the indicator diagram $\Gamma^+\subset \overline\Rnp$ of $\Psi^+$ contains the set $\overline{U\cap\Rnp}$ for some neighborhood $U$ of $0$.

Denote by ${\bf 1}$ the single-point set $\{(1,\ldots,1)\}\in\Rnp$. Take any $c>0$. By \eqref{eq:indsupp}, we have $c\psi^+=h_{c\Gamma^+}$ and $h_{-{\bf 1}}(t)=-\sum_k t_k$ for the set $-{\bf 1}=\{(-1,\ldots,-1)\}\in\Rnm$.
Then
\begin{eqnarray*}
  & & \frac1{(2\pi)^n}\int_{\Cn}e^{-2c\Psi^+} \,\beta_{n}
=
\int_{\Rn}e^{-2[-\sum_k t_k+c\psi^+(t)]}
\,dt \\
&=&\int_{\Rn}e^{-2[h_{-{\bf 1}}(t)+h_{c\Gamma^+}(t)]}
\,dt =
\int_{\Rn}e^{-2h_{{-{\bf 1}}+c\Gamma^+}(t)}
\,dt.
\end{eqnarray*}
By Lemma~\ref{lem:support} below, the last integral converges if and only if $h_{{\bf 1}+c\Gamma^+}>0$ on the unit sphere $S$, which is equivalent to 
\beq\label{eq:cpsi} c\psi^+(t)> \sum_k t_k\quad \forall t\in S.
\eeq
Denote 
\beq\label{eq:H+}
H_+=\{t\in\Rn:\: \sum_k t_k> 0\}.
\eeq
Since (\ref{eq:cpsi}) trivially holds on $S\setminus H_+$ and the both sides of (\ref{eq:cpsi}) are positive homogeneous, the convergence of the integral is equivalent to 
\beq\label{eq:c on H+} c\psi^+(t)> \langle t,\bone\rangle= \sum_k t_k\quad \forall t\in H_+,
\eeq
or, which is the same,
\[
c\psi^+(t)> 1\quad \forall t\in H_1, 
\]
where
\beq\label{eq:H1}
H_1=\{t\in\Rn:\: \sum_k t_k=1\}.
\eeq

In addition, condition \eqref{eq:c on H+} means that there exists $\alpha<1$ such that $c\alpha\,\psi^+(t)\ge \sum_k t_k$ for all $t\in H_+$ and thus for all $t\in\Rn$. Since $c\alpha\,\psi$ is the supporting function of the set $\Gamma_{c\alpha\Psi^+}$ and $\sum_k t_k$ is the supporting function of the single-point set ${\bf 1}$, this means precisely ${\bf 1}\subseteq \Gamma_{c\alpha\Psi^+}$ and hence, $c^{-1}{\bf 1}$ belongs to the interior of $\Gamma_{\Psi^+}$.

This proves

\begin{proposition}\label{prop:ind_Kiselman} 
Let $\Psi\in\LPSHs\Cn$. Then 
\beq\label{eq:conv}
\int_{\Cn}e^{-2c\Psi^+} \,\beta_{n}<\infty
\eeq
if and only if 
\beq\label{eq:ind_Kiselman1}
c>\sup\left\{ \frac{\sum_kt_k}{\psi(t)}:\: t\in H_+\right\},
\eeq
so
\beq\label{eq:ind_Kiselman2}
\LCT(\Psi)=\sup\left\{ \frac{\sum_kt_k}{\psi(t)}:\: t\in H_+\right\}= \frac{1}{\inf\{\psi(t):\: t\in H_1\}}.
\eeq
In addition, inequality (\ref{eq:ind_Kiselman1}) means that the point $c^{-1}\bone$ lies in the interior of the indicator diagram $\Gamma_{\Psi^+}$ of $\Psi^+$. 
\end{proposition}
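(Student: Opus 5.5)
The plan is to reduce the integral to a real integral over $\Rn$ by polar coordinates in each variable. Since $\Psi$ is an indicator, $\Psi^+(z)=\psi^+(\log|z_1|,\ldots,\log|z_n|)$. Substituting $|z_k|=e^{t_k}$, we have $\beta_n=\prod_k r_k\,dr_k\,d\theta_k$ and $r_k\,dr_k=e^{2t_k}dt_k$. This gives
\[
(2\pi)^{-n}\int_\Cn e^{-2c\Psi^+}\beta_n=\int_\Rn e^{-2(c\psi^+(t)-\sum_k t_k)}\,dt .
\]
The function $g(t)=c\psi^+(t)-\sum_kt_k$ is convex and positively homogeneous. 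By its description above, it equals the support function $h_{-\bone+c\Gamma^+}$.

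The key lemma (the Lemma~\ref{lem:support} referred to above) says that for a positively homogeneous convex $g$, one has $\int_\Rn e^{-2g}\,dt<\infty$ if and only if $g>0$ on the unit sphere $S$.

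For the \emph{if} direction, continuity and compactness give $g\ge\delta>0$ on $S$. Homogeneity then gives $g(t)\ge\delta|t|$, and $e^{-2\delta|t|}$ is integrable.

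For the \emph{only if} direction, suppose $g(t^0)\le0$ for some $t^0\in S$. I expect this to be the main obstacle, because we need a set of infinite measure, not just a ray, on which $g$ is bounded above. Since $g$ is convex and finite, it is Lipschitz with some constant $L$. Then $g(st^0+y)\le g(st^0)+L|y|\le L|y|$ for all $s\ge0$. The half-cylinder $\{st^0+y: s\ge0,\ |y|\le1,\ y\perp t^0\}$ therefore has infinite measure and $e^{-2g}\ge e^{-2L}$ on it, so the integral diverges.

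With the lemma, convergence of \eqref{eq:conv} is equivalent to $c\psi^+(t)>\sum_kt_k$ for all $t\in S$. Off $H_+$ the right-hand side is $\le0$. In that region $\psi^+(t)>0$ unless $t\in\overline\Rnm$, and for $t\in\overline\Rnm\cap S$ we have $\sum t_k<0$ strictly. So the condition holds automatically there. On $H_+$ we have $t\notin\overline\Rnm$, hence $\psi^+(t)=\psi(t)>0$. By homogeneity the condition becomes $c>\sum_kt_k/\psi(t)$ for $t\in H_+$, with the supremum required to be strict. Strictness follows because the supremum over $H_+$ equals the maximum of $\sum t_k/\psi$ over the compact set $S\cap\overline{H_+}$ after continuous extension. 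On the boundary $\sum t_k=0$ this ratio is $0$, and $\psi>0$ on $S\setminus\overline\Rnm$, so the maximum is attained on $S\cap H_+$. This gives \eqref{eq:ind_Kiselman1}.

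Equation \eqref{eq:ind_Kiselman2} follows from the first part. Here $\LCT(\Psi)=\LCT(\Psi^+)$, since $\Psi=\Psi^+$ off the polydisk and $e^{-2c\Psi^+}$ is bounded on compacts. Normalizing $t\in H_+$ to $H_1$ gives $\sup 1/\psi=1/\inf_{H_1}\psi$.

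For the final statement, strict inequality at the attained maximum gives $\alpha<1$ with $c\alpha\psi^+\ge\sum t_k$ everywhere. This says $\bone\in\Gamma_{c\alpha\Psi^+}=c\alpha\Gamma_{\Psi^+}$, so $c^{-1}\bone\in\alpha\Gamma_{\Psi^+}$. Since $\Gamma_{\Psi^+}$ contains a neighborhood of $0$ in $\overline\Rnp$, the point $c^{-1}\bone$ lies in the interior.

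Conversely, suppose $c^{-1}\bone$ is interior. Then $(1+\epsilon)c^{-1}\bone\in\Gamma_{\Psi^+}$ for some $\epsilon>0$. This gives $c\psi^+\ge(1+\epsilon)\sum t_k$, which is strict on $H_+$.
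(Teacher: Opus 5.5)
Your proposal is correct and follows the paper's proof essentially step by step: the polar-coordinate reduction to $\int_{\Rn} e^{-2h_{-\bone+c\Gamma^+}}\,dt$, the support-function integrability lemma, the reduction from $S$ to $H_+$, and the $\alpha<1$ argument placing $c^{-1}\bone$ in the interior of $\Gamma_{\Psi^+}$. The only deviation is in the divergence half of the lemma, where you use a half-cylinder instead of the paper's cone $C_E$. There, the global Lipschitz bound you need comes from sublinearity, $g(st^0+y)\le g(st^0)+g(y)\le s\,g(t^0)+L|y|$ with $L=\max_S g^+$, not from convexity and finiteness alone; with that justification the argument is fine and also covers the case $g(t^0)<0$ directly.
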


In the proof, we have used the following, probably known, fact.

\begin{lemma}\label{lem:support} 
Let $h_K$ be the supporting function of a bounded convex body $K\subset\Rn$. Then
\beq\label{eq:finite int}
\int_{\Rn} e^{-h_K(t)}\,dt<\infty 
\eeq
if and only if $h_K(t)>0$ for all  $t\neq 0$.
\end{lemma}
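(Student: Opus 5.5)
Both directions rest on the fact that $h_K$ is continuous and positively homogeneous of degree one. This lets us pass to polar coordinates $t=r\theta$ with $\theta$ on the unit sphere $S$ and $r>0$. Then
\[
\int_{\Rn} e^{-h_K(t)}\,dt=\int_S\int_0^\infty e^{-r h_K(\theta)}\,r^{n-1}\,dr\,d\sigma(\theta).
\]
The plan is to analyze the inner radial integral for each fixed direction $\theta$, and then to use continuity of $h_K$ on the compact set $S$.

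\emph{Sufficiency.} Suppose $h_K(t)>0$ for all $t\neq0$. By continuity and compactness, $m:=\min_S h_K>0$, so $h_K(t)\ge m|t|$ on $\Rn$. Hence
\[
\int_{\Rn} e^{-h_K}\,dt\le\int_{\Rn}e^{-m|t|}\,dt<\infty,
\]
and this direction is immediate.

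\emph{Necessity.} Suppose $h_K(\theta_0)\le 0$ for some $\theta_0\in S$. For fixed $\theta$, the inner integral equals $(n-1)!\,h_K(\theta)^{-n}$ when $h_K(\theta)>0$, and it is $+\infty$ when $h_K(\theta)\le0$. If $h_K\le 0$ on a set of directions of positive $\sigma$-measure, the integral is already infinite. The delicate case, and the step I expect to be the main obstacle, is when $h_K$ vanishes only on a $\sigma$-null set of directions, for example at the single direction $\theta_0$. Here I would use that $h_K$ is Lipschitz: $|h_K(t)-h_K(s)|\le M|t-s|$ with $M=\max_{a\in K}|a|$, since $K$ is bounded. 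From $h_K(\theta_0)\le0$ we get $h_K(\theta)\le M|\theta-\theta_0|$. Therefore
\[
\int_S (n-1)!\,h_K(\theta)^{-n}\,d\sigma(\theta)\ \ge\ c\int_{S\cap\{|\theta-\theta_0|<\delta\}}|\theta-\theta_0|^{-n}\,d\sigma(\theta).
\]
The right-hand side diverges, because $S$ is $(n-1)$-dimensional and the exponent is $n>n-1$; in local coordinates it compares to $\int_0^\delta \rho^{-n}\rho^{n-2}\,d\rho=\infty$. This forces the integral to be infinite.

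A more direct version of the same estimate avoids polar coordinates. In the cone-like region $\{t=r\theta:\ |\theta-\theta_0|<\delta\}$ we have $h_K(t)\le M r|\theta-\theta_0|$, so on the set $\{t:\ \mathrm{dist}(t,\R_+\theta_0)<1\}$ the bound $h_K(t)\le M$ holds. That set is a half-cylinder of infinite volume, so the integrand stays bounded below by $e^{-M}$ on a set of infinite measure, and the integral diverges. I would present this cylinder argument, since it is cleaner. The case $n=1$ is included: the cylinder degenerates to the ray $\R_+\theta_0$, which has infinite length.
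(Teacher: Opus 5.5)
Your proof is correct. The sufficiency half is the paper's argument: take a positive minimum of $h_K$ on the sphere and use polar coordinates.

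For necessity you take a different route. The paper first says finiteness evidently forces $h_K\ge 0$, then assumes $h_K(t_0)=0$ and rotates so that $t_0=(1,0,\ldots,0)$. It then uses convexity of $s\mapsto h_K(1,s)$ on the slice $E=\{1\}\times\B_{n-1}$: this function is $0$ at the centre and at most $\gamma=\sup_E h_K$. That gives $h_K(t_1,t')\le\gamma|t'|$ on the cone $C_E=\bigcup_{r>0}rE$, where $\int_{C_E}e^{-\gamma|t'|}\,dt$ diverges, and the case $\gamma=0$ is handled separately.

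You instead use the global Lipschitz bound $|h_K(t)-h_K(s)|\le M|t-s|$, $M=\sup_{a\in K}|a|$, which comes straight from boundedness of $K$. Together with $h_K(r\theta_0)=r\,h_K(\theta_0)\le 0$, it gives $h_K\le M$ on the unit tube around the ray $\R_+\theta_0$, a set of infinite volume.

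Your version is shorter. It needs no change of coordinates, no convexity estimate on a slice and no $\gamma=0$ subcase. It treats $h_K(\theta_0)<0$ and $h_K(\theta_0)=0$ at once, where the paper dismisses the former as evident, and it covers $n=1$. The paper's estimate uses only convexity and local boundedness of $h_K$ on the slice rather than an explicit Lipschitz constant. This gains no real generality, since a finite sublinear function is automatically Lipschitz and is the support function of a compact set.

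Your preliminary polar-coordinates version of necessity is fine for $n\ge 2$ but does not apply literally when $n=1$, where $S$ is two points. Presenting the tube argument, as you propose, is the right choice.
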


\begin{proof}
    Evidently, (\ref{eq:finite int}) implies $h_K(t)\ge 0$ in $\Cn$. If $h_K(t)>0$ for all  $t\neq 0$, then $h_K\ge\delta>0$ on the unit sphere $S$, and hence
    \begin{eqnarray*}
    \int_{\Rn} e^{-h_K(t)}\,dt &=& \int_0^\infty\int_{S} e^{-r\,h_K(s)}r^{n-1}\,dS\,dr \\ 
    &\le& \int_0^\infty\int_{S} e^{-\delta r}r^{n-1}\,dS\,dr<\infty.
    \end{eqnarray*}
To prove the converse, let $h_K(t_0)=0$ for some $t_0\neq 0$. Then, by a change of coordinates, we can assume $t_0=(1,0,\ldots,0)$. Let $\gamma=\sup h_K$ on the set $E=\{1\}\times \B_{n-1}\subset\Rn$. If $\gamma=0$, then $h_K\equiv 0$ on the whole cone $C_E=\cup_{r>0}\, rE$ and integral in (\ref{eq:finite int}) diverges, so we assume $\gamma>0$. Since the function $h_K(1,s)$ is convex on $E$, it is dominated there by $v(s)=\gamma |s|$ and so, $h_K(t)=h_K(t_1,t')=t_1h(1,t'/t_1)\le \gamma|t'|$ on $C_E$. Therefore,
\[
\int_{\Rn} e^{-h_K(t)}\,dt\ge \int_{C_E} e^{-\gamma |t'|}\,dt_1\,dt'=\infty,
\]
and the conclusion follows.  
\end{proof}

\begin{remark}
    Proposition~\ref{prop:ind_Kiselman} need not be true for indicators not from the class $\LPSHs\Cn$. Indeed, for $\Psi=\log|z_1\ldots z_n|$, the right hand side of \eqref{eq:ind_Kiselman1} equals $1$ while $\LCT(\Psi)=\LCT(\Psi^+)=+\infty$.
\end{remark}

\medskip

\begin{example}    Given $a=(a_1,\ldots,a_n)\in\Rnp$, consider the function      \beq\label{eq:phia}
    S_a(z)=\max_k\frac{\log|z_k|}{a_k}.
    \eeq
    Evidently, $S_a=\Psi_{S_a}\in\LPSHs\Cn$, and its convex image
    $$s_a(t)=
    \max_k\frac{t_k}{a_k}
    $$ 
is the supporting function of the $(n-1)$-dimensional simplex
 \[
    \Sigma_a=\left\{b\in{\overline\Rnp}:\: \sum_k \frac{b_k}{a_k}=1\right\},
    \]
while $s_a^+(t)$
    is the supporting function of the $n$-dimensional simplex
 \[
    \Delta_a=\left\{b\in{\overline\Rnp}:\: \sum_k \frac{b_k}{a_k}\le 1\right\}.
 \]   
 Since $\inf\{s_a(t):\: t\in H_1\}=(\sum_k a_k)^{-1}$, we have, by Proposition~\ref{prop:ind_Kiselman},
    \beq\label{eq:lct phia}
    \LCT(S_a)=\sum_k a_k;
    \eeq
    note that it equals $c_0(S_a)$.
    We have also
    \beq\label{eq:Sa}
    \m_n(S_a):=(dd^c S_a)^n(\Cn)=(a_1\cdot\ldots\cdot a_n)^{-1}
    \eeq
and, for any $u\in\LPSH\Cn$,
    \beq\label{eq:Psi_phia}
\psi_u(a)=\limsup_{z\to\infty}\frac{u(z)}{S_a(z)},\quad a\in\Rnp,
    \eeq
    see \cite{Rash01}.
    \end{example}

\medskip

It is not in general true that $\inf\{\psi^+(t):\: t\in H_1\}$ in Proposition~\ref{prop:ind_Kiselman}  is attained on $H_1\cap\Rnp=\Sigma_\bone$. 

\begin{example} Let $\psi(t)$ on $\R^2$ be defined as 
$\psi(t)=\max\{ 4t_1, 4t_1+2t_2,t_2,0\}$.
Then $\psi(t)=4t_1+2t_2$ on $\R_+^2$, so 
$\inf\{\psi(t):\: t\in \Sigma_\bone\}=2$. At the same time, $\psi(t)= t_2$ on the half-line $\{t_2=-4t_1, \ t_1<0\}$, so $\psi(-1/3,4/3)=4/3<2$. 
\end{example}

On the other hand, for some classes of indicators, the two infimums are guaranteed to coincide. We recall the following notion (e.\,g., \cite{BoLe}, \cite{MSSS}).

\begin{definition}\label{def:lower}
    A compact convex set in $\Gamma\subset\overline\Rnp$
is a {\it lower set} if $$a\in\Gamma \implies b\in\Gamma$$ for any $b\in\Rnp$ such that $b_k<a_k$, $1\le k\le n$.
\end{definition}

\begin{lemma}\label{lem:lowerset} Let $\Psi(z)\in\LPSHs\Cn$ be an indicator such that $\psi^+(t)$ is the supporting function of a lower set (in other words, if its indicator diagram is a lower set). Then 
\beq\label{eq:inf}
\inf\{\psi^+(t):\: t\in H_1\}=\inf\{\psi^+(t):\: t\in \Sigma_\bone\}.
    \eeq
\end{lemma}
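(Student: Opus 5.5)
Since $\Sigma_\bone\subset H_1$, the inequality ``$\le$'' in \eqref{eq:inf} is trivial, so the task is to show that for every $t\in H_1$ there is $s\in\Sigma_\bone$ with $\psi^+(s)\le\psi^+(t)$. The plan is to exploit the lower-set property through the supporting function. Write $\Gamma=\Gamma_{\Psi^+}\subset\overline\Rnp$, so $\psi^+(t)=\sup\{\langle a,t\rangle:\ a\in\Gamma\}$. The key observation is that for a lower set (which, being closed, contains with each $a$ the whole box $\{b\in\overline\Rnp:\ b_k\le a_k\ \forall k\}$), the supremum defining $h_\Gamma(t)$ only sees the positive part of $t$: setting $t^+=(\max\{t_1,0\},\ldots,\max\{t_n,0\})$, I will show
\[
h_\Gamma(t)=h_\Gamma(t^+)\qquad\forall t\in\Rn.
\]
Indeed, $h_\Gamma(t)\le h_\Gamma(t^+)$ because $\langle a,t\rangle\le\langle a,t^+\rangle$ for $a\in\overline\Rnp$; conversely, given $a\in\Gamma$, replace by $0$ the coordinates $a_k$ with $t_k<0$ to get $b\in\Gamma$ (lower set plus closedness) with $\langle b,t\rangle=\langle a,t^+\rangle$.

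Next, take $t\in H_1$. Then $\sigma:=\sum_k t_k^+\ge\sum_k t_k=1$, and $s:=t^+/\sigma\in\Sigma_\bone$ (its coordinates are nonnegative and sum to $1$; here I read $\Sigma_\bone$ as the closed simplex $\{s\in\overline\Rnp:\ \sum s_k=1\}$). By positive homogeneity and nonnegativity of $\psi^+$,
\[
\psi^+(s)=\sigma^{-1}\psi^+(t^+)\le\psi^+(t^+)=\psi^+(t).
\]
Taking the infimum over $t\in H_1$ gives ``$\ge$'' in \eqref{eq:inf}, completing the argument.

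The only delicate point is the identity $h_\Gamma(t)=h_\Gamma(t^+)$, which requires that zeroing out coordinates stays inside $\Gamma$; Definition~\ref{def:lower} is stated with strict inequalities $b_k<a_k$ and $b\in\Rnp$, so I will pass to the boundary by approximating $b$ with points having small positive coordinates strictly below $a_k$ (slightly shrinking all coordinates of $a$) and use compactness of $\Gamma$. One should also check that $\psi^+\ge0$, which holds since $0\in\Gamma$ (as $\psi^+\ge 0$ means $h_\Gamma\ge0$, or directly from the neighborhood of $0$ contained in $\Gamma^+$ noted earlier).
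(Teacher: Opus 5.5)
Your proof is correct, and it takes a genuinely different route from the paper's. The paper argues dually. It rewrites \eqref{eq:inf} as the claim that $c^{-1}\bone$ is an interior point of $\Gamma^+$ whenever $c^{-1}<\inf_{\Sigma_\bone}\psi^+$, and translates $\Gamma^+$ by $-c^{-1}\bone$. It then uses the lower-set property to reduce to the case $(\Gamma^+-c^{-1}\bone)\cap\Rnp=\emptyset$, which it rules out by a separating hyperplane whose normal must lie in $\overline{\Rnp}$.

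You argue directly instead. You prove the pointwise identity $\psi^+(t)=\psi^+(t^+)$, which is exactly the easy direction of the characterization from \cite{MSSS} quoted in the remark after the lemma. You then push any $t\in H_1$ onto $\Sigma_\bone$ via $t\mapsto t^+/\sum_k t_k^+$, using positive homogeneity and $\psi^+\ge 0$; the latter is immediate from $\psi^+=\max\{\psi,0\}$, so no detour through $0\in\Gamma$ is needed.

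Your version avoids separation theorems and gives a pointwise comparison rather than a statement about infima. By compactness of $\Sigma_\bone$, it also shows that the infimum over $H_1$ is attained on $\Sigma_\bone$. The paper's version is phrased in the interior-point language of Proposition~\ref{prop:ind_Kiselman}, which is the form then used in Corollary~\ref{cor:lctlowset}.

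One small repair concerns your treatment of the literal Definition~\ref{def:lower}. If $a\in\Gamma$ has some coordinate $a_k=0$, there is no $b\in\Rnp$ with $b_k<a_k$ for all $k$, so shrinking the coordinates of $a$ does not work for such $a$. The fix is to use that $\Gamma=\Gamma^+$ contains $\overline{U\cap\Rnp}$ for a neighborhood $U$ of $0$, so it is a convex body. Then $h_\Gamma(t)=\sup\{\langle a,t\rangle:\ a\in\mathrm{int}\,\Gamma\}$ with $\mathrm{int}\,\Gamma\subset\Rnp$, and for such $a$ your approximation argument applies verbatim. Alternatively, read the definition in the standard way (closed under coordinatewise decrease within $\overline{\Rnp}$), which is clearly what is intended.
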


\begin{proof}
    Equality \eqref{eq:inf} is equivalent to saying that for any 
    $$c>\inf\{\psi^+(t):\: t\in \Sigma_\bone\},$$ the point $c^{-1}\bf 1$ belongs to the interior of the indicator diagram $\Gamma^+$. 
    
    Consider the translation $\Gamma'=\Gamma^+-c^{-1}\bf 1$ of $\Gamma^+$.  Then 
    \beq\label{eq:c1}
    h_{\Gamma'}(t)=\psi(t)-c^{-1}\sum_k t_k 
     > 0
    \quad \forall t\in\Rnp.
    \eeq  
As $\Gamma^+$ is a lower set, then either $\Gamma'\cap\Rnp$ contains a neighborhood of the origin, or it is empty. In the former case, $c^{-1}\bf 1$ belongs is an interior point of $\Gamma^+$. In the latter case, there exists a hyperplane $l$ separating  $\Rnp$ and $\Gamma'$. Since $l\cap\Rnp=\emptyset$, its upward normal $t$ belongs to $\overline\Rnp$, and the separation property contradicts \eqref{eq:c1}. 
\end{proof}

\begin{remark}
    Actually, a much stronger result is proved in \cite[Thm. 5.8]{MSSS}: for a convex function $\psi$ of a linear growth on $\Rn$, $\psi^+(t)=\psi^+(t^+)$ for all $t\in\Rn$ if and only if the indicator diagram of $\psi^+$ at infinity is a lower set; some other characterizations of lower sets were given there as well. As shown in \cite{Sno}, Lipschitz continuity of $\Psi(z)$ is also one of such  characterizations. 
\end{remark}

\begin{corollary}\label{cor:lctlowset}
    If an indicator $\Psi\in\LPSHs\Cn$ is the logarithmic supporting function of a lower set, then  
    \[
    \LCT(\Psi)=\sup\left\{ \frac{\sum_ka_k}{\psi(a)}:\: a\in \Rnp\right\}= \frac{1}{\inf\{\psi(a):\: a\in \Sigma_\bone\}}.
    \]
\end{corollary}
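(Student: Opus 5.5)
The plan is to combine Proposition~\ref{prop:ind_Kiselman} with Lemma~\ref{lem:lowerset}. Proposition~\ref{prop:ind_Kiselman} gives
\[
\LCT(\Psi)=\frac{1}{\inf\{\psi(t):\: t\in H_1\}}.
\]
The convergence criterion \eqref{eq:cpsi} there is really formulated for $\psi^+$, so I first note that the infimum over $H_1$ may be taken for $\psi^+$ instead of $\psi$. For $\Psi\in\LPSHs\Cn$, the discussion preceding that proposition shows $\psi>0$ off $\overline{\Rnm}$. Moreover $H_1\cap\overline{\Rnm}=\emptyset$, since $\sum_k t_k=1$ forces some $t_k>0$. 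Hence $\psi=\psi^+$ on $H_1$, and both infima coincide.

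Next, the hypothesis is that the indicator diagram of $\psi^+$, namely $\Gamma_{\Psi^+}=\Gamma_{\Psi,\infty}$, is a lower set. This is exactly the hypothesis of Lemma~\ref{lem:lowerset}, so \eqref{eq:inf} gives
\[
\inf_{H_1}\psi^+=\inf_{\Sigma_\bone}\psi^+ .
\]
On $\Sigma_\bone=H_1\cap\overline\Rnp$ we again have $\psi^+=\psi$. Indeed, $\psi$ is increasing in each variable and $\psi(0)=0$, so $\psi\ge 0$ on $\overline\Rnp$. This yields the second equality,
\[
\LCT(\Psi)=\frac{1}{\inf\{\psi(a):\: a\in \Sigma_\bone\}}.
\]

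For the first expression, I use positive homogeneity of $\psi$. For $a\in\Rnp$ set $s=\sum_k a_k>0$. Then $a/s\in\Sigma_\bone$ and
\[
\frac{\sum_k a_k}{\psi(a)}=\frac{1}{\psi(a/s)} .
\]
Here $\psi(a)>0$ because $a\notin\overline\Rnm$. Taking the supremum over $\Rnp$ gives $1/\inf\{\psi:\: \Sigma_\bone\cap\Rnp\}$.

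The only small point is that $\Rnp$ is open, so one needs the infimum over the relative interior of $\Sigma_\bone$ to equal the infimum over its closure. This follows from continuity of the convex, finite-valued function $\psi$ on $\Rn$, since the relative interior is dense in $\Sigma_\bone$. With that remark the two formulas coincide, and no step here poses a real obstacle. The substantive input is entirely contained in Lemma~\ref{lem:lowerset} and Proposition~\ref{prop:ind_Kiselman}.
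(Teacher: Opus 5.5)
Your argument is correct and follows the route the paper intends. The paper gives no separate proof of this corollary; it is the direct combination of Proposition~\ref{prop:ind_Kiselman} with Lemma~\ref{lem:lowerset}, and your remarks ($\psi=\psi^+$ on $H_1$, positive homogeneity to pass between $\Rnp$ and $\Sigma_\bone$, density of the relative interior) just make the routine steps explicit. The one point you assert rather than check is that the corollary's hypothesis coincides with the lemma's, but this is immediate: $\psi(e_k)>0$ for each $k$, so by convexity the lower set meets $\Rnp$, hence contains $0$, and therefore $\psi\ge 0$, i.e. $\psi=\psi^+$.
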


\section{Toric functions and monomial maps}

Our next step is to extend (\ref{eq:ind_Kiselman1})-(\ref{eq:ind_Kiselman2}) to {\it toric (multicircled)} psh functions $u$ in $\LPSHs\Cn$: $u(z)=u(|z_1|,\ldots,|z_n|)$. We denote the class of all such functions by $\TLPSHs\Cn$; we will also use the denotations $\TPSH\Cn$ and $\TLPSH\Cn$ for the corresponding toric subcones of $\PSH\Cn$ and $\LPSH\Cn$. Important examples of toric functions are $u=\log|P|$ for {\it monomial maps} $P:\Cn\to\C^q$ (maps whose components are monomials). 

Any $u\in\TPSH\Cn$ generates a convex function 
\[
\widehat u(t):=u(e^{t_1},\ldots, e^{t_n})
\]
on $\Rn$ (the {\it convex image} of $u$), increasing in each $t_k$; conversely, any such function on $\Rn$ produces the corresponding function from $\TPSH\Cn$. In Section~\ref{ssec:ind}, we used it for the indicators $\Psi_u$; in accordance with the present denotation, $\psi_u=\widehat \Psi_u$.

The condition $u\in\TLPSH\Cn$ means that $\widehat u$ is of linear growth at $+\infty$:  
$\widehat u(t)\le C\max_kt_k+O(1)$  as $\max_kt_k\to+\infty$
with some $C\ge 0$, and the condition 
$u\in\TLPSHs\Cn$ means that, in addition, 
\[
\{t:\: \widehat u(t)<C\}\subset \{t:\: \max_k t_k\le T\}
\]
for some $C, T\in\R$.

The indicator of any $u\in\TLPSH\Cn$ can be constructed as follows. First, we switch from $u(z)$ to $\widehat u(t)$. By the convexity of $\widehat u(t)$ in $t$, the ratio
\beq\label{eq:ratio}
v_c(t):=\frac{\widehat u(ct)-\widehat u(t)}{c-1}, \quad c>0,
\eeq
is increasing in $c$ for each fixed $t\in\Rn$, and its limit, as $c\to +\infty$,  is $\psi_u=\widehat\Psi_u$. 
Moreover, given {\sl any} $u\in\LPSH\Cn$, one constructs its indicator $\Psi_u$ as the indicator of the toric function 
\[
\int_{\Tn}u(z\zeta)\,d\sigma(\zeta),
\]
where $d\sigma$ is the Haar measure on the distinguished boundary $\Tn$ of the unit polydisk and $z\zeta=(z_1\zeta_1,\ldots, z_n\zeta_n)$, see details in \cite{Rash01}.

A lot of results on asymptotics of functions from $\TLPSHs\Cn$ can be deduced from the corresponding statements about their indicators due to the following

\begin{theorem}\label{thm:lbound}
    Let $u\in\TLPSHs\Cn$. Then for any $\epsilon>0$ there exists $R>1$ such that
    \beq\label{eq:lbound}
u(z)\ge (1-\epsilon)\Psi_u(z)+C,\quad z\in\Cn\setminus \D_R^n,
\eeq
with $C=\sup\{u(z):\: z\in \D_1^n\}$.
\end{theorem}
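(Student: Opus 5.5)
The plan is to pass to convex images and use the monotonicity of the difference quotients \eqref{eq:ratio} along rays, plus a compactness (Dini-type) argument. Write $\widehat u(t)=u(e^{t_1},\ldots,e^{t_n})$ and $\psi=\psi_u=\widehat\Psi_u$. Since $\widehat u$ is increasing in each $t_k$, we have $C=\sup_{\D^n_1}u=\widehat u(0)$; here $\widehat u$ is extended to $[-\infty,\infty)^n$ by monotone limits, which are the values of $u$ on the coordinate hyperplanes. A point $z\notin\D_R^n$ corresponds to $s$ with $\max_k s_k\ge \log R$. We write $s=ct$ with $c=\max_k s_k$ and $t$ in
\[
K=\{t\in[-\infty,1]^n:\ \max_k t_k=1\},
\]
which is compact in the extended topology. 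By the homogeneity \eqref{eq:ind_hom}, $\Psi_u(z)=c\,\psi(t)$. So it suffices to show that
\[
f_c(t):=\frac{\widehat u(ct)-\widehat u(0)}{c}\ \ge\ (1-\epsilon)\psi(t)\quad\text{for all } t\in K,\ c\ge \log R .
\]

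For fixed $t$, the function $g(c)=\widehat u(ct)$ is convex in $c\ge0$. Hence $f_c(t)=(g(c)-g(0))/c$ is nondecreasing in $c$, and its limit as $c\to\infty$ is $\psi(t)$. This is the same monotonicity as in \eqref{eq:ratio}, with base point $0$ instead of $t$; the limit is unchanged because $\widehat u(t)/(c-1)\to0$. For $t$ with some coordinates equal to $-\infty$, the same holds for the restriction of $u$ to the corresponding coordinate subspace, whose indicator is the restriction of $\Psi_u$. Next, exhaustiveness gives positivity. As shown at the beginning of Section~6, $\psi>0$ off $\overline{\Rnm}$, and $\Gamma_{u,\infty}$ contains a piece $\overline{U\cap\Rnp}$ of a neighborhood of $0$. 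Since $\max_k t_k=1$ on $K$, this gives $\psi\ge\delta>0$ on $K$, including the boundary points.

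The main step is to make the convergence $f_c\uparrow\psi$ uniform on $K$. Fix $t^0\in K$ and choose $c_0$ with $f_{c_0}(t^0)>(1-\epsilon/2)\psi(t^0)$. We need a neighborhood $V$ of $t^0$ in $K$ on which $f_{c_0}>(1-\epsilon)\psi$. Then monotonicity in $c$ gives the same inequality for all $c\ge c_0$ on $V$, and finitely many such $V$ cover $K$. The required local inequality needs $f_{c_0}$ to be lower semicontinuous and $\psi$ to be upper semicontinuous on $K$. On $\Rn$ both are finite convex functions, hence continuous. At points with some $t^0_k=-\infty$ we use the key observation: $\widehat u$ is increasing in $t_k$ and its value at $-\infty$ is the infimum. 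Hence nearby points have $\widehat u\ge$ (value at the nearby boundary point), and by continuity in the finite coordinates this yields lower semicontinuity of $f_{c_0}$. The function $\psi$, as a supremum over the compact set $\Gamma_u\subset\overline\Rnp$ of linear functions, is upper semicontinuous on $[-\infty,\infty)^n$ in the same sense.

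Finally, set $\log R=\max_j c_0(V_j)$. Then for $z\notin\D_R^n$ we get $u(z)=\widehat u(ct)\ge \widehat u(0)+(1-\epsilon)c\psi(t)=C+(1-\epsilon)\Psi_u(z)$. The positivity bound $\psi\ge\delta$ lets us pass between additive and multiplicative errors if needed. I expect the main obstacle to be the semicontinuity/Dini step at the boundary points of $K$ where coordinates equal $-\infty$. If the direct argument there fails, the fallback is induction on $n$: apply the statement to the restrictions of $u$ to coordinate subspaces, and then use monotonicity in the remaining variables.
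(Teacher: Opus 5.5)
\textbf{There is a genuine gap, at exactly the step you flagged.} Take $t^0\in K$ with some $t^0_k=-\infty$. You assert $f_c(t^0)\uparrow\psi(t^0)$, on the grounds that the indicator of $u|_{\{z_k=0\}}$ is the restriction of $\Psi_u$. In general only one inequality holds. Since $\widehat u(s,t')\ge\widehat u(-\infty,t')$, the indicator of $u|_{\{z_1=0\}}$ is $\le\Psi_u|_{\{z_1=0\}}$, and it can be strictly smaller. In that case there may be no $c_0$ with $f_{c_0}(t^0)>(1-\epsilon/2)\psi(t^0)$, so your covering argument cannot start at $t^0$. The induction fallback breaks for the same reason: the restriction satisfies the bound only relative to its own, smaller, indicator. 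Your semicontinuity and monotonicity claims are fine; the pointwise limit at the boundary is what fails.

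\textbf{This cannot be repaired, because \eqref{eq:lbound} itself is false.} Take $n=2$ and
\[
u(z)=\max\Big\{\log|z_1|,\ \tfrac12\log|z_2|,\ \sup_{k\ge1}\big(\tfrac1k\log|z_1|+\log|z_2|-k\big)\Big\}.
\]
\begin{itemize}
\item The supremum is continuous on $(\C^*)^2$ and tends to $-\infty$ at the coordinate axes, so $u$ is psh on $\C^2$, and it is clearly toric.
\item One has $u\le\log^+|z_1|+\log^+|z_2|$, so $u$ has logarithmic growth.
\item One has $u\ge\frac12\log\max\{|z_1|,|z_2|\}$ once this maximum is $\ge1$. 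Hence $u\in\TLPSHs{\C^2}$.
\item Since $\frac1c\sup_k(ct_1/k-k)\to0$ for $t_1<0$, we get $\psi_u(t)=\max\{t_1,\ t_2/2,\ t_2+t_1^+\}$. So $\Psi_u(z)=\log|z_2|$ whenever $|z_1|\le1\le|z_2|$.
\item On the axis, $u(0,z_2)=\frac12\log|z_2|$. At $z=(e^{-T^2},e^{T})$ with $T\in\mathbb{N}$, the bound $T^2/k+k\ge2T$ (equality at $k=T$) gives $u(z)=T/2$ while $\Psi_u(z)=T$.
\end{itemize}
So \eqref{eq:lbound} fails for every $\epsilon<\frac12$, every $R$ and every $C$.

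\textbf{The paper's own proof has the same defect in another form.} Its Dini argument on $\Lambda=\{t\in\Rn\setminus\Rnm:\ \sum_k|t_k|=1\}$ is legitimate. However, $\psi_u=0$ on $\Lambda\cap\partial\Rnm$, so uniform convergence of $v_c$ does not give the factor $1-\epsilon/2$ near there. The bad directions $(-T^2,T)/(T^2+T)$ accumulate precisely at $(-1,0)\in\partial\Rnm$. Your normalization $\max_k t_k=1$ secures $\psi\ge\delta>0$ but loses compactness in $\Rn$; compactifying only moves the non-uniformity to the points at $-\infty$.

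\textbf{What either argument does prove.} Running Dini on the compact set $\{t\in\Rn:\ \max_kt_k=1,\ \min_kt_k\ge-M\}$ gives $u\ge(1-\epsilon)\Psi_u+C$ on the cones $\{z\notin\D_R^n:\ |z_k|\ge\max_j|z_j|^{-M}\ \forall k\}$, with $R=R(\epsilon,M)$. The consequences the paper draws from the theorem, such as \eqref{eq:torindeq}, would need a separate argument.
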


\begin{proof}
    It suffices to prove this assuming $u\ge 0$ outside $\D^n$ and so, $\widehat u\ge 0$ on $\Rn\setminus\Rnm$. Since the function $v_c(t)$ \eqref{eq:ratio} is continuous in $t$ and increasing in $c$ to a continuous function $\widehat\Psi_u$, Dini's lemma implies the uniform convergence on the compact set $\Lambda=\{t\in \Rn\setminus\Rnm:\:  \sum_k |t_k|=1\}$. Therefore, for any $\epsilon>0$, we have, for all $c>c_0>1$ and all $t\in \Lambda$,
\[
\widehat u(ct)\ge \widehat u(t)+(1-\epsilon/2)\frac{c-1}{c}\psi_u(ct) \ge (1-\epsilon/2)\frac{c-1}{c} \psi_u(ct).
\]
Then there exists $R>1$ such that 
\[
\widehat u(t)\ge (1-\epsilon)\psi_u(t)
\]
for all $t\in \Rn\setminus\Rnm$ with $\max_k t_k\ge \log R$. Passing to the toric psh functions $u$ and $\Psi_u$, we get
\[
u(z)\ge (1-\epsilon)\Psi_u(z),\quad z\in\Cn\setminus \D_R^n.
\]
which completes the proof.
\end{proof}

Since $u\le \Psi_u + C$ for any $u\in\LPSH\Cn$, 
Theorem~\ref{thm:lbound} and Proposition~\ref{prop:comparison} give us immediately the equality
\beq\label{eq:torindeq}
\LCT(u)=\LCT(\Psi_u) \quad \forall u\in\TLPSHs\Cn,
\eeq

 Furthermore, together with Proposition~\ref{prop:ind_Kiselman} it implies

\begin{theorem}\label{thm:toric_Kiselman}
   For any $u\in\TLPSHs\Cn$ and a compact $K\supset\D^n$ , we have
   \beq\label{eq:conv_toric}
\int_{\simk}e^{-2cu} \,\beta_{n}<\infty
\eeq
if and only if 
\beq\label{eq:toric_Kiselman1}
c>\sup\left\{ \frac{{\langle t,\bone\rangle}}{\psi_u(t)}:\: t\in H_+\right\}
\eeq
(in other words, $c^{-1}\bone\in {\rm int}\,\Gamma_{u,\infty}$), 
so
\begin{eqnarray*}\label{eq:toric_Kiselman2}
\LCT(u) &=& \LCT(\Psi_u)=\sup\left\{ \frac{{\langle t,\bone\rangle}}{\psi_u(t)}:\: t\in H_+\right\} \\
&=& \inf\{c>0:\: c^{-1}\bone\in \Gamma_{u,\infty}\};
\end{eqnarray*}
here the set $H_+$ is defined by (\ref{eq:H+}), $\psi_u(t)$ is the convex image \eqref{eq:psiu} of the indicator $\Psi_u(z)$, and $\Gamma_{u,\infty}$ is the indicator diagram of $u$ at infinity \eqref{eq:indicdiaginfty}.
\end{theorem}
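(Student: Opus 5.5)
The plan is to reduce everything to Proposition~\ref{prop:ind_Kiselman} applied to $\Psi=\Psi_u$, using two-sided comparisons between $u$ and its indicator near infinity.

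The upper comparison is \eqref{eq:ind_bound}: $u\le\Psi_u+C$ on $\Cn$. The lower comparison is Theorem~\ref{thm:lbound}: for each $\epsilon>0$ there is $R>1$ with $u\ge(1-\epsilon)\Psi_u+C'$ on $\Cn\setminus\D_R^n$.

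First, the choice of $K$ does not matter. Since $u\in\TLPSHs\Cn$ is psh and bounded above on compacts, and $e^{-2cu}$ is locally integrable, local integrability is the issue. Near points of $\Cn\setminus\D^n$ this is harmless because $u\to+\infty$ at infinity; but local singularities of $u$ outside $\D^n$ could a priori matter. To avoid this, I would note that $u$ is exhaustive and toric, hence $\widehat u$ is convex and finite on $\{t:\max_kt_k>0\}$. It follows that $u$ is locally bounded outside $\overline\D^n$ intersected with the coordinate-free region. Near the coordinate hyperplanes, $u$ is still bounded below by the toric maximum principle, since $\widehat u$ is increasing in each $t_k$ and $u\ge\inf$ over tori. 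Hence \eqref{eq:conv_toric} is equivalent to integrability of $e^{-2cu}$ on $\Cn\setminus\D_R^n$ for some (any) large $R$. I expect the precise justification of this independence of $K$ to be the fiddly point. Concretely, one must check that for $K\supset\D^n$ the region $\D_R^n\setminus K$ contributes finitely, using that $u$ is bounded below on $\Cn\setminus\D^n$ (shown as in the proof of Theorem~\ref{thm:lbound} after normalizing $\widehat u\ge0$ off $\Rnm$).

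Next, for the ``if'' direction, take $c$ satisfying \eqref{eq:toric_Kiselman1}. Since the supremum condition is open, there is $\epsilon>0$ with $c(1-\epsilon)$ still satisfying it. By Theorem~\ref{thm:lbound}, $e^{-2cu}\le e^{-2cC'}e^{-2c(1-\epsilon)\Psi_u}$ outside $\D_R^n$. On that set $\Psi_u=\Psi_u^+$, so Proposition~\ref{prop:ind_Kiselman} gives convergence. For the ``only if'' direction, \eqref{eq:ind_bound} gives $e^{-2cu}\ge e^{-2cC}e^{-2c\Psi_u^+}$ on $\Cn\setminus\D^n$. Meanwhile $\int_{\D^n}e^{-2c\Psi_u^+}\beta_n$ is trivially finite, so convergence of \eqref{eq:conv_toric} forces \eqref{eq:conv}, and hence \eqref{eq:ind_Kiselman1}, by Proposition~\ref{prop:ind_Kiselman}.

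Finally, the formula $\LCT(u)=\LCT(\Psi_u)$ is \eqref{eq:torindeq}, and the supremum expression comes from \eqref{eq:ind_Kiselman2}. For the description via $\Gamma_{u,\infty}$, I would use the last assertion of Proposition~\ref{prop:ind_Kiselman}: condition \eqref{eq:ind_Kiselman1} is equivalent to $c^{-1}\bone\in{\rm int}\,\Gamma_{\Psi_u^+}$. Here $\Gamma_{\Psi_u^+}=\Gamma_{u,\infty}$ by \eqref{eq:indicdiaginfty}, since $\psi_u^+=h_{\Gamma_{u,\infty}}$. Taking the infimum over $c$, the interior and the closed-set conditions give the same infimum, because $\Gamma_{u,\infty}$ is a convex body containing a neighborhood of $0$ in $\overline\Rnp$. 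So the ray $\{s\bone\}$ meets it in a segment whose interior points lie in ${\rm int}\,\Gamma_{u,\infty}$. This yields $\LCT(u)=\inf\{c>0:\ c^{-1}\bone\in\Gamma_{u,\infty}\}$.
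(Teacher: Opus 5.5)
Your proposal is correct and follows essentially the paper's argument: sandwich $u$ near infinity between $(1-\epsilon)\Psi_u+C'$ (Theorem~\ref{thm:lbound}) and $\Psi_u+C$ (see \eqref{eq:ind_bound}), then apply Proposition~\ref{prop:ind_Kiselman}. The paper states this in one line. Your extra checks supply details it leaves implicit: the independence of $K$, and the fact that the interior and closed conditions on $c^{-1}\bone$ give the same infimum. Your justification of independence of $K$ is muddled, but it follows cleanly from $u(z)\ge\min_k u(e_k)>-\infty$ on $\Cn\setminus\D^n$, by monotonicity of $\widehat u$ in each $t_k$.
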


\begin{remark}\label{rem:locglob}
    In terms of the integrability index, the formulas rewrite as
    \beq\label{eq:KisHowglob}
    \lambda_\infty(u)=\inf\{ \psi_u(t):\: t\in H_1\}
    = \sup\{\lambda>0:\: \lambda\bone\in \Gamma_{u,\infty}\},
    \eeq
     where $H_1$ is defined by (\ref{eq:H1}). Their original local versions are due to Kiselman \cite{Kiselman} and Howald \cite{Howald}, see also \cite{Gue} and \cite{Rash13}:
    \beq\label{eq:KisHowloc}
\lambda_0(u)=\sup\left\{ \frac{\psi_u(t)}{\langle t,\bone\rangle}:\: t\in \Rnm\right\}
    = \inf\{\lambda>0:\: \lambda\bone\in \Gamma_{u,0}\},
    \eeq
   where $\Gamma_{u,0}$ is the indicator diagram \eqref{eq:G0} of $u$ at $\zeta=0$.
\end{remark}

By comparing \eqref{eq:KisHowglob} and \eqref{eq:KisHowloc}, one deduces the following relation between the local and global thresholds of any toric function from  $\LPSHs\Cn$.

\begin{corollary}\label{cor:locglobtor} Let $u\in\TLPSHs\Cn$, then $ \LCT(u)\le c_0(u)$, with equality if and only if the indicator $\Psi_u(z)$ equals $S_a(z)$ \eqref{eq:phia} for some $a\in\Rnp$ (equivalently, $\Gamma_{u,\infty}$ is a simplex). 
    \end{corollary}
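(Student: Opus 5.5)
The plan is to compare the two Kiselman-type formulas in Remark~\ref{rem:locglob} directly on the level of the single convex function $\psi:=\psi_u$. The first step is to observe that for toric $u$ the local indicator at $0$ is just the restriction of the global one. Indeed, in \eqref{eq:ind} with $|z_k|<1$, the supremum over the small polydisk $\{|\zeta_k|\le|z_k|^R\}$ of a toric function increasing in each $|\zeta_k|$ is attained at the corner, and the same limit defines $\Psi_{u,0}$. Hence $\psi_{u,0}=\psi$ on $\Rnm$, and \eqref{eq:KisHowloc} becomes
\[
\lambda_0(u)=\sup\{-\psi(-s):\ s\in\overline\Rnp,\ \langle s,\bone\rangle=1\},
\]
after writing $t=-s$ and using homogeneity. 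Similarly, \eqref{eq:KisHowglob} gives $\lambda_\infty(u)=\inf\{\psi(t):\ t\in H_1\}$, where $\psi$ may be replaced by $\psi^+$ on $H_1$ by the discussion before Proposition~\ref{prop:ind_Kiselman}.

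For the inequality, fix $t\in H_1$ and $s\in\Sigma_\bone$. By convexity and positive homogeneity,
\[
\psi(t)+\psi(-s)\ge 2\psi\bigl(\tfrac{t-s}{2}\bigr).
\]
The point $(t-s)/2$ lies in $H_0=\{\langle\cdot,\bone\rangle=0\}$, and $H_0\cap\overline\Rnm=\{0\}$. Since $\Psi$ is exhaustive, the preliminary discussion in Section~6 gives $\psi>0$ off $\overline\Rnm$, so $\psi\ge0$ on $H_0$, with equality only at $0$. Therefore $\psi(t)\ge-\psi(-s)$. Taking the infimum over $t$ and the supremum over $s$ gives $\lambda_\infty(u)\ge\lambda_0(u)$, that is, $\LCT(u)\le c_0(u)$. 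For the ``if'' direction of the equality case, \eqref{eq:lct phia} already gives $\LCT(S_a)=c_0(S_a)=\sum a_k$. Combined with \eqref{eq:torindeq} and the fact that $c_0(u)=c_0(\Psi_u)$ by Howald's formula, this yields equality whenever $\Psi_u=S_a$.

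For the ``only if'' direction, which I expect to be the main obstacle, assume $\lambda:=\lambda_\infty=\lambda_0$. The plan is to extract extremal $t_0\in H_1$ and $s_0\in\Sigma_\bone$. This uses compactness of $\Sigma_\bone$ on the local side; on the global side, coercivity of $\psi$ on $H_1$ holds because $\psi>0$ off $\overline\Rnm$ and $H_1$ is asymptotically parallel to $H_0$. Equality in the chain above then forces $\psi((t_0-s_0)/2)=0$, hence $t_0=s_0=:s$. So the single hyperplane $L=\{b:\ \langle b,s\rangle=\lambda\}$ passes through $\lambda\bone$, bounds $\Gamma_{u,\infty}=\mathrm{conv}(\Gamma_u\cup\{0\})$ from above (since $\psi^+(s)=\lambda$), and bounds $\Gamma_u$ from below (since $h_{\Gamma_u}(-s)=-\lambda$). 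Consequently $\Gamma_u\subset L$, and $\Gamma_{u,\infty}$ is the cone over $\Gamma_u$ with apex $0$, contained in $\{b\in\overline\Rnp:\ \langle b,s\rangle\le\lambda\}$.

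It remains to show that $s\in\Rnp$ and that $\Gamma_u$ is the whole face $L\cap\overline\Rnp$. Then $\Gamma_{u,\infty}=\Delta_a$ with $a_k=\lambda/s_k$, and $\psi=h_{\Gamma_u}=s_a$, i.e.\ $\Psi_u=S_a$. Here I would use exhaustiveness once more: $\psi(-e_j+M e_k)>0$ for the appropriate directions forces $\Gamma_u$ to contain points with large $k$-th coordinate, and testing $\psi(e_k)>0$ and $\psi(-e_j)$ against points of $\Gamma_u\subset L$ should force $\Gamma_u$ to meet every coordinate axis inside $L$. By convexity, $\Gamma_u$ then contains the full simplex $L\cap\overline\Rnp$. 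Positivity of each $s_k$ should follow because otherwise $L\cap\overline\Rnp$ would be unbounded in the $e_k$ direction, contradicting compactness of $\Gamma_u$ together with its meeting that axis. This last combinatorial step is where I expect care is needed, and I would check it first in $n=2$.
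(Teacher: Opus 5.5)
Your proof is correct in outline, and it takes a genuinely different route from the paper's. Only the last step needs repair.

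\textbf{Comparison with the paper.} The paper uses the Howald-type (diagram) halves of \eqref{eq:KisHowglob}--\eqref{eq:KisHowloc}. It takes the intercepts $a_k$ of the local diagram $\Gamma_{u,0}$ with the coordinate axes. It then shows that the simplex $\Sigma_a$ with vertices $a_ke_k$ lies in both $\Gamma_{u,0}$ and $\Gamma_{u,\infty}$ (by monotonicity, and for $\Gamma_{u,\infty}$ by convexity in $t_k$ together with $\psi_u=0$ on $\partial\Rnm$). Locating $\lambda_a\bone\in\Sigma_a$, with $\lambda_a^{-1}=\sum_k a_k^{-1}$, gives $\lambda_0(u)\le\lambda_a\le\lambda_\infty(u)$.

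You instead use the Kiselman-type halves and a single midpoint convexity estimate across $H_0$. This needs only $\psi>0$ on $H_0\setminus\{0\}$ and gives the inequality in one line. Because both extrema are attained and the estimate is strict unless $t_0=s_0$, you also get a real treatment of the equality case. The paper's proof omits this: it stops at the sandwich, and one would still have to argue that a supporting hyperplane at $\lambda_a\bone$ to each diagram must contain $\Sigma_a$. What the paper's route buys is an explicit intermediate exponent $\lambda_a$ built from local data. In the equality case your simplex $L\cap\overline\Rnp$ is exactly its $\Sigma_a$.

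\textbf{The last step.} The test directions you propose do not work. Vectors with a single negative coordinate, such as $Me_k-e_j$, are not enough when $n\ge3$. Take $\psi(t)=\max\{t_1+t_2,\,t_1+t_3,\,t_2+t_3\}$. Its diagram is the triangle with vertices $(1,1,0),(1,0,1),(0,1,1)$, which lies in $L$ for $s=\frac13\bone$, $\lambda=\frac23$. It satisfies $\psi(Me_k-e_j)=M>0$ and $\psi(e_k)>0$ for all $k\ne j$ and $M>0$. Yet the diagram meets no coordinate axis, and $\psi$ is not exhaustive: $\psi(e_1-M(e_2+e_3))<0$ for $M>1$.

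Use instead the vector $e_k-M\sum_{j\neq k}e_j$, which lies outside $\overline\Rnm$.
\begin{itemize}
\item Exhaustiveness gives $\max_{b\in\Gamma_u}(b_k-M\sum_{j\ne k}b_j)>0$ for every $M>0$.
\item Maximizers $b^M$ then satisfy $\sum_{j\neq k}b^M_j<b^M_k/M$.
\item By compactness, some limit point $\beta e_k$ lies in $\Gamma_u\subset L$.
\end{itemize}
Hence $\beta s_k=\lambda>0$. This, and not an unboundedness argument, is why $s_k>0$. It also gives $(\lambda/s_k)e_k\in\Gamma_u$ for every $k$, so $\Gamma_u=L\cap\overline\Rnp$.

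\textbf{Two minor points.}
\begin{itemize}
\item The supporting function of the simplex with vertices $(\lambda/s_k)e_k$ is $\max_k\lambda t_k/s_k$. So you get $\Psi_u=S_a$ with $a_k=s_k/\lambda$, not $\lambda/s_k$. The identification $s_a=h_{\Sigma_a}$ in the example around \eqref{eq:phia} has the same inversion. This is harmless for the statement.
\item The form you prove, equality iff $\Psi_u=S_a$, is the right one. The parenthetical ``$\Gamma_{u,\infty}$ is a simplex'' only records $\psi_u^+$ and is not equivalent. For example, $\max\{S_a,\frac12S_a\}$ has the same diagram at infinity as $S_a$, so its $\LCT$ is $\sum_k a_k$, while its $c_0$ is $2\sum_k a_k$. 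Do not try to derive the equality case from that clause.
\end{itemize}
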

    
\begin{proof}
    
    Consider the $(n-1)$-dimensional simplex
    \[
    \Sigma_a=\left\{b\in{\overline \Rnp}:\: \sum_k \frac{b_k}{a_k}=1\right\}
    \]
    with $a_k=-\lim \psi_u(t)>0$ as $t_k\to-1$ and $t_j\to-\infty$ for all $j\neq k$. 
    In view of \eqref{eq:G0} and monotonicity of $\psi_u$, any its vertex $a^{(k)}$ satisfies $\langle a^{(k)},t\rangle \le \psi(t)$ for all $t\in\Rnm$ and, therefore, belongs to the convex set $\Gamma_{u,0}$, which gives us $\Sigma_a\subset\Gamma_{u,0}$. In order to check $\Sigma_a\subset\Gamma_{u,\infty}$, it suffices to show that each $a^{(k)}$ satisfies 
    $\langle a^{(k)},t\rangle \le \psi(t)$ for all $t\in \Rn\setminus\Rnm$. When $t_k<0$, this follows as in the above case $t\in\Rnm$. For $t_k>0$, we use, in addition, convexity of $\psi_u$ in $t_k$ and the property $\psi_u(t)=0$ on $\partial\Rnm$, which gives us  $a_k\le \lim \psi_u(t)$ as $t_k\to 1$ and $t_j\to-\infty$ for all $j\neq k$.

    The ray $\{\lambda\bone:\: \lambda>0\}$ intersects the set $\Sigma_a$ at the point $\lambda_a\bone$ with $\lambda_a^{-1}=\sum_k a_k^{-1}$. Therefore, by \eqref{eq:KisHowglob} and \eqref{eq:KisHowloc},
    \(  \lambda_0(u)\le \lambda_a \le  \lambda_\infty(u)\),
    which completes the proof.
\end{proof}

\begin{remark}
    If $u\in\LPSHs\Cn$ is such that $e^{-cu}\in L^2(\Cn)$ for some $c>0$, then  $\LCT(u)\le c<c_\zeta(u)$ for every $\zeta\in\Cn$. We believe that  the relation $\LCT(u)\le c_\zeta(u)$ holds true for every $\zeta\in\Cn$ and any $u\in\LPSHs\Cn$ (for $n=1$, this is so by Corollary~\ref{cor:locglob1}). 

    Note also that the relation  $\LCT(u)\le c_0(u)$  need not be true if $u\not\in\LPSHs\Cn$, even if $u$ is toric. For example, $c_0(\log|z_1|)=1$ while $\LCT(\log|z_1|)=+\infty$.
\end{remark}

As a simple consequence of theorem~\ref{thm:toric_Kiselman}, we get the openness property for toric psh functions:

\begin{corollary}
    For any $u\in \TLPSHs\Cn$, $e^{-\LCT(u)u}\not\in L^2(\infty)$.
\end{corollary}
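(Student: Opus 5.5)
The statement follows from Theorem~\ref{thm:toric_Kiselman} once one notes that the convergence condition \eqref{eq:toric_Kiselman1} is a \emph{strict} inequality, namely $c^{-1}\bone\in{\rm int}\,\Gamma_{u,\infty}$. So the plan is to show that the critical value $c_*:=\LCT(u)$ itself fails it.

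First I would dispose of the degenerate case. If $\LCT(u)=+\infty$, the expression $e^{-\LCT(u)u}$ is read as the limiting case; in the natural reading, no finite $c$ makes the integral converge, so the claim holds vacuously. Hence assume $c_*<\infty$. Corollary~\ref{cor:logtype} gives $c_*\ge n/\sigma_u>0$, so $c_*$ is a genuine positive number.

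Next, by Theorem~\ref{thm:toric_Kiselman}, for any compact $K\supset\D^n$ we have $\int_{\simk}e^{-2c_*u}\beta_n<\infty$ if and only if
\[
c_*>\sup\left\{ \frac{\langle t,\bone\rangle}{\psi_u(t)}:\: t\in H_+\right\}.
\]
The same theorem identifies this supremum with $\LCT(u)=c_*$. The required inequality is therefore $c_*>c_*$, which is false, and the integral diverges for every such $K$. The geometric version says the same thing: $c_*^{-1}\bone$ is the boundary point $\lambda_\infty(u)\bone$ of $\Gamma_{u,\infty}$, given by the supremum in \eqref{eq:KisHowglob} over a closed set, and so it is not interior.

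Finally, I would pass from compacts containing $\D^n$ to arbitrary compacts $K$. If $K$ is any compact set, then $K':=K\cup\overline{\D^n_R}$ is a larger compact containing $\D^n$. Divergence over $\Cn\setminus K'\subset\Cn\setminus K$ then forces divergence over $\Cn\setminus K$. Hence $e^{-c_*u}\notin L^2(\infty)$.

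The only delicate point is that Theorem~\ref{thm:toric_Kiselman} really gives an \emph{if and only if} with strict inequality at the endpoint. This rests on Lemma~\ref{lem:support}: it yields divergence when $h_{-\bone+c\Gamma^+}$ vanishes at some $t\neq0$, and at $c=c_*$ it does vanish, at a minimizer of $\psi^+$ on $H_1$ or in the limit along $H_+$. That inequality is then transferred from $\Psi_u$ to $u$ through the two-sided comparison of Theorem~\ref{thm:lbound} and \eqref{eq:ind_bound}. Since we take the theorem as given, no further work is needed.

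One might worry that the comparison $u\ge(1-\epsilon)\Psi_u+C$ loses the endpoint. It does not matter for the direction we need: $u\le\Psi_u+C$ gives $e^{-2c_*u}\ge e^{-2c_*C}e^{-2c_*\Psi_u}$. Divergence for $\Psi_u$ at $c_*=\LCT(\Psi_u)$, from Proposition~\ref{prop:ind_Kiselman}, then transfers directly to $u$. I would include this one-line remark to make the argument self-contained.
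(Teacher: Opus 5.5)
Your proposal is correct and follows the paper, which obtains the corollary as an immediate consequence of Theorem~\ref{thm:toric_Kiselman}: that theorem's convergence criterion is the strict inequality $c>\sup\{\langle t,\bone\rangle/\psi_u(t):\ t\in H_+\}=\LCT(u)$, which fails at $c=\LCT(u)$, and enlarging $K$ to contain $\D^n$ correctly handles arbitrary compacts. The case $\LCT(u)=+\infty$, which you treat as vacuous, never arises here: $\psi_u$ is continuous, positive on $H_1$, and tends to $+\infty$ along $H_1$, so $\inf_{H_1}\psi_u$ is attained and positive, and hence $\LCT(u)<\infty$.
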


\begin{remark}
Openness of $\{c>0:\: e^{-cu}\in L^2(\zeta)\}$ for any function $u$ plurisubharmonic near a point $\zeta\in\Cn$ was the famous Openness Conjecture \cite{DK} proved in \cite{BB}.
\end{remark}
 
\begin{remark}
We can also describe the set of $c>0$ such that 
\beq\label{eq:globalint}
e^{-cu}\in L^2(\Cn)
\eeq
for $u\in \TLPSHs\Cn$. Since the only possible singularity it can have is at $\zeta=0$, \eqref{eq:globalint} takes place if and only if $\LCT(u)<c<c_0(u)$. 
\end{remark}

\medskip

In the polynomial case $u=\log|P|$, $P=(P_1,\ldots,P_q)\in\cPs$, the indicator diagram $\Gamma_{\log|P|,\infty}$ is the Newton polytope $\Gamma_{P,\infty}$ (see the end of Section~\ref{ssec:ind}), and the indicator $\Psi_{\log|P|}=\max_k \Psi_{\log|P_k|}$. This makes computation of $\LCT(P)$ using Theorem~\ref{thm:toric_Kiselman} simple and explicit. 
Since $\log|P|\in\TLPSHs\Cn$ for every {\sl monomial map} $P\in \cPs$, this gives us an effective way of evaluating $\LCT(P)$ for such maps.
Moreover, this can be extended to a larger class of polynomial maps.

\begin{definition}\label{def:Sp} {\rm \cite{BH17}}
Given a polynomial map $P=(P_1,\dots, P_q):\C^n\to \C^q$ and $h\in\C[z_1,\dots, z_n]$, we say that $h$ is
{\it special with respect to $P$} when
\begin{equation}
\vert h(z)\vert \leq C \Vert P(z)\Vert
\end{equation}
for all $\Vert z\Vert \gg 1$ and some constant $C>0$.

We denote by $\Sp(P)$, or by $\Sp(P_1,\dots, P_q)$, the set of all polynomials $h\in\C[z_1,\dots, z_n]$
such that $h$ is special with respect to $P$, and we will refer to $\Sp(P)$ as the {\it special closure of $P$}.
\end{definition}

We remark that $\Sp(P)$ is a $\C$-vector subspace of $\C[z_1,\dots, z_n]$ containing $\Lin(P)$, the $\C$-vector subspace of $\C[z_1,\dots, z_n]$
generated by the component functions of $P$. Analogously to the usual notion of an integrally closed ideal, we say that $P$ is {\it specially closed} if $\Lin(P)=\Sp(P)$.

We also recall two notions going back to \cite{Ku}.

\begin{definition}
    (a) A polynomial map $P:\Cn\to\C^q$ is convenient (commode) if for any $k\le n$ there exists a positive integer $r_k$ such that $r_ke_k\in \Gamma_{P,\infty}$, where $\{e_1,\ldots,e_n\}$ is the canonical basis in $\Rn$. 
    
     (b) We say that $P$ is Newton non-degenerate at infinity if, for any face $\Delta$ of $\Gamma_{P,\infty}$, not passing through $0$, the map $P^\Delta=(P_1^\Delta,\ldots,P_q^\Delta)$ has no zeros outside the union of coordinate planes; here 
    \[
    P_k^\Delta = \sum_{J\in \supp(P_k)\cap\Delta} c_Jz^J.
    \]
    is the $\Delta$-reduction of the polynomial $ P_k = \sum_{J} c_Jz^J.$
    \end{definition}

Let $P$ be a polynomial map, convenient and Newton
non-degenerate at infinity. Then automatically $P\in \mathcal P^*$, since the \L ojasiewicz exponent of $P$ at infinity is positive in this
case \cite{BH17}.  Under these conditions, the special closure of $P$ is equal to the $\C$-vector space generated by those monomials $z^J$ such that $J\in \Gamma_{P,\infty}$. In particular, we have that
$$
\LL_\infty(P)=\min\{r_1,\dots, r_n\}
$$
where $r_1,\dots, r_n$ are the positive integers such that $r_1e_1,\dots, r_ne_n$ belong to the Newton boundary of $P$ (the union of the faces of  $\Gamma_{P,\infty}$ not passing through the origin).

In general, we have $c_\infty(P)=c_\infty(Q)$, where $Q:\C^n\to \C^s$ denotes any polynomial map whose components
generate, as a vector space, the special closure of $P$. If we assume that $P$ is Newton non-degenerate, then $Q$ can be taken as
any monomial map whose set of component functions is equal to $\{z^k: k\in \Gamma_{P,\infty}\}$ \cite[Corollary 4.10]{BH17}.
Therefore, we conclude that 
\beq\label{eq:spcl}
\LCT(P)=\LCT(Q)=\LCT(\Psi_{\log|Q|})=\LCT(\Psi_{\log|P|}),
\eeq
which proves

\begin{corollary}\label{cor:NNDlct}
Let $P:\C^n\to \C^q$ be a polynomial map, convenient and Newton
non-degenerate at infinity. Then
$$
c_\infty(P)=\frac{1}{\max\{t>0: t\bone\in\Gamma_{P,\infty}\}}
$$
(and thus is a rational number).
\end{corollary}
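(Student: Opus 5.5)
The formula will come from chaining \eqref{eq:spcl} with Theorem~\ref{thm:toric_Kiselman}, applied to a monomial map $Q$ whose components are the monomials $z^J$, $J\in\Gamma_{P,\infty}\cap\Z^n$. Since $P$ is convenient and Newton non-degenerate at infinity, $\LL_\infty(P)>0$ by \cite{BH17}, so $P\in\cPs$ and \eqref{eq:spcl} gives $\LCT(P)=\LCT(Q)$.

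First I check that $\Gamma_{Q,\infty}=\Gamma_{P,\infty}$. The integer points of $\Gamma_{P,\infty}$ include $0$ and all of $\supp(P)$, and their convex hull is again $\Gamma_{P,\infty}$, because that set is a lattice polytope. Convenience puts the points $r_ke_k$ in $\supp(Q)$, so $|Q(z)|\ge c\max_k|z_k|^{r_k}$. Hence $\log|Q|\in\TLPSHs\Cn$, and Theorem~\ref{thm:toric_Kiselman} applies to it. In its form \eqref{eq:KisHowglob} it yields
\[
\lambda_\infty(Q)=\sup\{\lambda>0:\ \lambda\bone\in\Gamma_{Q,\infty}\}=\sup\{\lambda>0:\ \lambda\bone\in\Gamma_{P,\infty}\}.
\]
Since $\Gamma_{P,\infty}$ is compact and contains a neighbourhood of $0$ in $\overline\Rnp$ (again by convenience), this supremum is attained and positive. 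Taking reciprocals gives the stated formula for $c_\infty(P)$.

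For rationality, let $t_0=\max\{t:\ t\bone\in\Gamma_{P,\infty}\}$. The point $t_0\bone$ lies on a facet of $\Gamma_{P,\infty}$ that does not pass through $0$. That facet lies in a hyperplane $\langle a,x\rangle=b$, where $a\in\Z^n$, $b\in\Z$ and $b\ne0$, because the polytope has integer vertices. Then $t_0=b/\langle a,\bone\rangle$. The denominator $\langle a,\bone\rangle$ is nonzero, since $t_0\bone$ solves the equation and $b\neq0$. So $t_0$, and therefore $c_\infty(P)=1/t_0$, is rational.

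The main obstacle is the step that \eqref{eq:spcl} delegates to \cite[Corollary 4.10]{BH17}: showing that $P$ and $Q$ have comparable moduli at infinity, $|P|\asymp|Q|$ for $|z|\gg1$. One direction is immediate, since each $|P_k|$ is bounded by a constant times $|Q|$. The reverse inequality $|Q|\le C|P|$ is exactly where Newton non-degeneracy is used. I would take it from \cite{BH17} rather than reprove it. If a self-contained argument were wanted, I would cover a neighbourhood of infinity by the toric cones dual to the faces of $\Gamma_{P,\infty}$ and apply a curve selection argument on each cone.
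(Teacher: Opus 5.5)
Your proposal is correct and takes essentially the same route as the paper. You reduce to the monomial map $Q$ with components $z^J$, $J\in\Gamma_{P,\infty}\cap\Z^n$, via \cite[Corollary 4.10]{BH17} and \eqref{eq:spcl}, and then read off $\lambda_\infty(Q)$ from Theorem~\ref{thm:toric_Kiselman} using $\Gamma_{Q,\infty}=\Gamma_{P,\infty}$. Your facet argument for rationality (a supporting hyperplane $\langle a,x\rangle=b$ with integer data and $b\neq0$) also fills in a point the paper leaves implicit.
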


\begin{remark}
Let $P:\Cn\to\C^q$ be a polynomial map and let us fix coordinates $z_1,\dots, z_n$ in $\C^n$.
Then we can consider the set
$$
A_P=\{J\in\Z^n_{\geq 0}: z^J\in\Sp(P)\}.
$$
In some cases, this set can be estimated or even explicitly described. Let $G:\C^n\to \C^q$ denote a polynomial map whose set of components is precisely $\{z^J: J\in A_P\}$. We automatically have
$$
c_\infty(G)\geq c_\infty(P)\geq c_\infty(P^0)
$$
where $P^0$ denotes any polynomial map whose set of component functions is $\supp(P_1)\cup\cdots \cup \supp(P_q)$.
We can show several examples where the bounds $c_\infty(G)$ and $c_\infty(P^0)$ coincide, so we can deduce the exact value of
$c_\infty(P)$ in some cases where $P$ is not Newton nondegenerate at infinity. 
\end{remark}

\section{Multipliers}\label{sec:multipliers}

It is easy to see that any entire function $f$ in $\Cn$ such that $|f|e^{-u}\in L^2(\infty)$ for some $u\in\LPSH\Cn$ is necessary a polynomial: since $u^+(z)\le\sigma_u\log^+|z|+C$, we have, for any $r>0$ and $z\in\Cn\setminus\B_1$,
\begin{eqnarray}\label{eqn:polynome}
 |f(z)|^2 &\le& \frac{1}{\tau_nr^{2n}}    
 \int_{\B_r(z)}|f|^2\,\beta_n \le \frac{e^{\sup\{2u^+(\zeta):\: \zeta\in\B_r(z)\}}}{\tau_nr^{2n}}
 \int_{\B_r(z)}|f|^2 e^{-2u^+}\,\beta_n \nonumber \\
 &\le& C_1 (|z|+r)^{2\sigma_u} r^{-2n}. 
\end{eqnarray}

\begin{definition}
Given $u\in\LPSH\Cn$, denote by $\cP(u)$ the collection of all polynomials ({\it multipliers})  $P:\Cn\to\C$ such that
$|P|e^{-u}\in L^2(\infty)$.
\end{definition}

Letting $r\to\infty$ in \eqref{eqn:polynome} shows that $\cP(u)=\{0\}$ if $\sigma_u<n$; this is also true if $\sigma_u=n$ because, in this case, $e^{-u}\not\in L^2(\infty)$ (exactly as in the one-dimensional case, see Theorem~\ref{thm:C}).
For $\sigma_u>n$, we have 
 
\begin{proposition}
    Let $\sigma_u>n$. Then any its multiplier is a polynomial of degree at most $\sigma_u-n$. Furthermore, if the Łojasiewicz exponent ${\LL}_\infty(u)> n$ and the local log canonical thresholds $c_\zeta(u)>1$ for all $\zeta\in\Cn$, then any polynomial of degree smaller than $\LL_\infty(u)-n$ belongs to $\cP(u)$.
\end{proposition}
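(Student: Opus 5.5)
For the first assertion I would use the sub-mean value estimate \eqref{eqn:polynome} with the radius $r$ tied to $|z|$. Let $P\in\cP(u)$, so there is a compact $K$ with $\int_{\simk}|P|^2e^{-2u}\beta_n<\infty$. Then $|P|e^{-u^+}\in L^2(\infty)$, since $e^{-u^+}\le e^{-u}$ wherever $u\le u^+$. Take $|z|$ large and $r=|z|/2$, so that the ball $\B_r(z)$ avoids $K$. The estimate gives
\[
|P(z)|^2\le C\,(3|z|/2)^{2\sigma_u}|z|^{-2n}\int_{\B_{|z|/2}(z)}|P|^2e^{-2u^+}\beta_n\le C'|z|^{2(\sigma_u-n)}.
\]
Hence $|P(z)|=O(|z|^{\sigma_u-n})$, and the Cauchy estimates (or Liouville applied along complex lines) give $\deg P\le \sigma_u-n$.

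The integral over $\B_{|z|/2}(z)$ actually tends to $0$ as $|z|\to\infty$, because these balls exit every compact set. So one even gets $|P(z)|=o(|z|^{\sigma_u-n})$. This strict improvement is not needed for the degree bound. At first I treat the constant in $u^+\le\sigma_u\log^+|\cdot|+C$ as uniform, which \eqref{eq:logtype1} provides.

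For the second assertion, fix $P$ with $d=\deg P<\LL_\infty(u)-n$ and choose $\ell$ with $d+n<\ell<\LL_\infty(u)$. By the definition \eqref{eq:Le} of the Łojasiewicz exponent, $u(z)\ge \ell\log|z|$ for $|z|\ge R$. Together with $|P(z)|\le C|z|^{d}$ this gives
\[
|P|^2e^{-2u}\le C|z|^{2d-2\ell}\quad\text{on }\{|z|\ge R\}.
\]
Since $2\ell-2d>2n$, this function is integrable outside the ball $\B_R$. Hence $|P|e^{-u}\in L^2(\infty)$, that is, $P\in\cP(u)$. The hypothesis $c_\zeta(u)>1$ for all $\zeta$ is not needed for membership in $\cP(u)$, because that is a condition only near infinity. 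Its role is local: $e^{-u}\in L^2(\zeta)$ for every $\zeta$ by definition of $c_\zeta$. By compactness of $\overline{\B_R}$, $e^{-u}$ and hence $|P|e^{-u}$ are then square-integrable on the whole ball. So in fact $|P|e^{-u}\in L^2(\Cn)$, i.e. $P$ lies in the global space $\wcP(u)$ as well. I would note this in the proof, since it is presumably why the hypothesis is stated.

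The main point requiring care is the first part: choosing $r$ proportional to $|z|$ rather than letting $r\to\infty$ with $z$ fixed. One must check that the balls $\B_{|z|/2}(z)$ stay in the region $\simk$ where integrability holds, and that $\sup_{\B_r(z)}u^+\le\sigma_u\log(3|z|/2)+C$. Both follow directly for $|z|\gg1$.
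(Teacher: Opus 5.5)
Your proof is correct and follows the paper's argument. The paper also gets the degree bound from the sub-mean-value estimate \eqref{eqn:polynome}, with $r$ effectively proportional to $|z|$ via $\inf_{r>0}C_1(t+r)^{b}r^{-c}=C_2t^{b-c}$. For the second part it likewise combines $u\ge(\LL_\infty(u)-\epsilon)\log^+|z|-C_\epsilon$ near infinity with local integrability from $c_\zeta(u)>1$; your remark that this last hypothesis only upgrades $P\in\cP(u)$ to $P\in\wcP(u)$ is accurate, and the paper's proof indeed yields the stronger $L^2(\Cn)$ conclusion.
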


\begin{proof}
Since, for any $t>0$ and $b>c>0$, 
\[
\inf_{r>0} C_1(t+r)^{b} r^{-c} = C_2 t^{b-c},
\]
bound \eqref{eqn:polynome} implies the first assertion. 

If $c_\zeta(u)>1$ for any $\zeta\in\Cn$, then 
\[
\int_{|z|\le R} |P|^2 e^{-2u}\,\beta_n < \infty 
\]
for any $R>0$ and any polynomial $P$, while for $P$ of degree $k<\LL_\infty(u)-n$ the bound $u(z)\ge (\LL_\infty(u)-\epsilon)\log^+|z|-C_\epsilon$ implies
\[
\int_{|z|>R} |P|^2 e^{-2u}\,\beta_n \le C \int_R^\infty 
r^{2(k-\LL_\infty(u)+\epsilon+n)-1}\,dr,
\] 
which gives us the second assertion.
\end{proof}

Any nontrivial $\cP(u)$ is a finite-dimensional $\C$-vector space with the property {\sl $P\in\cP(u)$, provided $|P(z)|\le C|Q(z)|$ for some $Q\in\cP(u)$, $C>0$ and all $z$ with $|z|\gg 1$}. In other words, any space $\cP(u)$ is {\it specially closed}: $\Sp(\cP(u))=\cP(u)$ (see Definition~\ref{def:Sp}).

\medskip

We will call a nontrivial vector space $V$ of polynomials $P:\Cn\to \C$ {\it monomial} if it can be generated by monomials: there exist finitely many monomials $z^J\in V$ such that any $P\in V$ can be represented as their linear combination: $P=\sum_J c_Jz^J$, $c_J\in\C$.

\begin{proposition}\label{prop:monmult}
    If $u\in\TLPSHs\Cn$  and $\LCT(u)<1$, then $\cP(u)$ is monomial.
\end{proposition}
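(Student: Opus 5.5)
The strategy is to exploit toric symmetry: if $P=\sum_J c_Jz^J\in\cP(u)$, we will show that each monomial $z^J$ with $c_J\neq0$ also belongs to $\cP(u)$. This gives a monomial basis at once, since $\cP(u)$ is finite-dimensional. The hypothesis $\LCT(u)<1$ is needed only to guarantee that $\cP(u)$ is nontrivial. Indeed, $c=1>\LCT(u)$ gives $e^{-u}\in L^2(\infty)$ by Theorem~\ref{thm:toric_Kiselman}, so $1\in\cP(u)$ and the notion of a monomial space applies.

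The key step is averaging over the torus. Fix a compact $K$ outside which $|P|e^{-u}$ is square integrable. Because $u$ is toric, we may enlarge $K$ to a polydisk $\overline{\D_R^n}$; its complement is invariant under $z\mapsto z\zeta$ for $\zeta\in\Tn$. In polar coordinates $z_k=r_ke^{i\theta_k}$, Parseval's identity on $\Tn$ gives
\[
\int_{\Cn\setminus \overline{\D_R^n}} |P|^2e^{-2u}\,\beta_n=\sum_J |c_J|^2\int_{\Cn\setminus \overline{\D_R^n}} |z^J|^2e^{-2u}\,\beta_n .
\]
This holds because $e^{-2u}$ and the domain depend only on $(r_1,\ldots,r_n)$, so the cross terms $c_J\bar c_{J'}z^J\bar z^{J'}$ with $J\ne J'$ integrate to zero over each torus $\{|z_k|=r_k\}$. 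The exchange of sum and integral is legitimate by Tonelli's theorem applied to the nonnegative radial integrals, since $P$ has finitely many terms. It follows that each term with $c_J\neq 0$ is finite, so $z^J\in\cP(u)$.

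To conclude, $\cP(u)$ is spanned by the finitely many monomials $z^J$ lying in it. Each such monomial is in $\cP(u)$ by definition, and every $P\in\cP(u)$ is a combination of monomials that are in $\cP(u)$ by the step above. Finiteness of this set follows from the degree bound $\deg\le\sigma_u-n$ in the preceding proposition.

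The main subtlety is the choice of exceptional compact set. Membership in $L^2(\infty)$ allows an arbitrary compact $K$, and the Parseval argument needs a torus-invariant region. Replacing $K$ by a polydisk containing it only shrinks the domain of integration, so finiteness is preserved, and the argument goes through. The identity for the cross terms needs $e^{-2u}$ to be locally integrable on the circles, but this causes no difficulty: we apply Tonelli's theorem in $r$ after integrating out the angles, and everything involved is nonnegative.
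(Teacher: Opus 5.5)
Your proof is correct and follows the same route as the paper. Both use the orthogonality of monomials on tori centered at the origin, over the torus-invariant region $\Cn\setminus\D_R^n$, to split $\int|P|^2e^{-2u}$ into $\sum_J|c_J|^2\int|z^J|^2e^{-2u}$, so every monomial occurring in $P$ lies in $\cP(u)$; your version is slightly more careful, since integrating out the angles first and applying Tonelli avoids the paper's implicit assumption that the cross-term integrals converge, and you make explicit that $\LCT(u)<1$ only serves to make $\cP(u)$ nontrivial.
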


\begin{proof}
    Similarly to the proof of \cite[Lemma 3.7]{Rash13}, let $P=\sum c_J z^J\in\cP(u)$. Since monomials are orthogonal on polydisks centered at the origin, we have
\begin{eqnarray*}
     \int_{\Cn\setminus\D_R^n} |P|^2e^{-2u}\,dV &= & \sum_{J,I}c_J\overline{c_I}\int_{\Cn\setminus\D_R^n} z^J
\bar z^I e^{-2u(|z_1|,\ldots,|z_n|)}\,dV(z) \\
&=& \sum_\alpha |c_J|^2\int_{\Cn\setminus\D_R^n} |z^J|^2e^{-2u}\,dV,
\end{eqnarray*}
which implies $z^\alpha\in\cJ(u)$ for any $\alpha$ such that $c_\alpha\neq 0$.
\end{proof}


\medskip
Applying Proposition \ref{prop:monmult} to any indicator $\Psi\in\LPSHs\Cn$ with $\LCT(\Psi)<1$, we find that $\cP(\Psi)$ is monomial. Its description is given by

\begin{proposition}\label{prop:multind}
    The multiplier space $\cP(\Psi)$ of an indicator $\Psi\in\LPSHs\Cn$ with $\LCT(\Psi)<1$ is generated by monomials $z^J$ such that $J+ {\bf 1}$ belong to the interior of $\Gamma_{\Psi^+}$.
\end{proposition}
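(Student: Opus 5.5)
By Proposition~\ref{prop:monmult}, $\cP(\Psi)$ is monomial. It therefore suffices to decide, for a single monomial $z^J$, when $|z^J|e^{-\Psi}\in L^2(\infty)$. The plan is to reduce this to the computation already done in the proof of Proposition~\ref{prop:ind_Kiselman}.

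First, $\Psi=\Psi^+$ outside $\D^n$. Since $\Psi^+$ is bounded on $\D^n$ and $|z^J|$ is bounded there, membership of $z^J$ in $\cP(\Psi)$ is equivalent to
\[
\int_{\Cn}|z^J|^2e^{-2\Psi^+}\,\beta_n<\infty .
\]
Passing to polar coordinates in each variable and setting $t_k=\log|z_k|$, this integral becomes
\[
(2\pi)^n\int_{\Rn}e^{-2[-\langle t,J+\bone\rangle+\psi^+(t)]}\,dt
=(2\pi)^n\int_{\Rn}e^{-2h_{\Gamma_{\Psi^+}-(J+\bone)}(t)}\,dt .
\]
This uses $h_{\{-(J+\bone)\}}(t)=-\langle t,J+\bone\rangle$ and the additivity of support functions under Minkowski sums. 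The computation is the same one as in Proposition~\ref{prop:ind_Kiselman}, with the point $\bone$ replaced by $J+\bone$ and with $c=1$.

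Next, apply Lemma~\ref{lem:support} to the compact convex set $K=\Gamma_{\Psi^+}-(J+\bone)$. The integral is finite if and only if $h_K(t)>0$ for all $t\neq0$, that is,
\[
\psi^+(t)>\langle t,J+\bone\rangle\quad\text{for all } t\neq 0 .
\]
By compactness of the unit sphere and positive homogeneity, this is equivalent to $\psi^+(t)\ge\langle t,J+\bone\rangle+\delta|t|$ for some $\delta>0$. In turn, that says the ball of radius $\delta$ around $J+\bone$ lies in $\Gamma_{\Psi^+}$, using the description \eqref{eq:indicdiag} of a convex body via its support function. Hence the integral is finite if and only if $J+\bone\in{\rm int}\,\Gamma_{\Psi^+}$.

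Finally, combining the two steps, $\cP(\Psi)$ is the span of exactly those monomials $z^J$ with $J+\bone\in{\rm int}\,\Gamma_{\Psi^+}$. The hypothesis $\LCT(\Psi)<1$ is needed only to invoke Proposition~\ref{prop:monmult}.

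The main obstacle I expect is the equivalence between strict positivity of $h_K$ on the sphere and $J+\bone$ being an interior point. One direction needs the uniform margin $\delta$ obtained from compactness. The other needs that an interior point gives $h_K(t)\ge\delta|t|$, which follows because $K$ then contains a ball around the origin. A minor further point to check is that Lemma~\ref{lem:support} applies to $K$: it requires $K$ to be a bounded convex body, which holds since $\Gamma_{\Psi^+}$ is compact and contains a neighborhood of $0$ in $\overline\Rnp$, as noted before Proposition~\ref{prop:ind_Kiselman}.
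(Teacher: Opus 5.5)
Your proof is correct and follows the paper's argument. You reduce to monomials via Proposition~\ref{prop:monmult}, rewrite $\int|z^J|^2e^{-2\Psi^+}\beta_n$ as $(2\pi)^n\int_{\Rn}e^{-2h_{\Gamma_{\Psi^+}-(J+\bone)}}\,dt$ exactly as in Proposition~\ref{prop:ind_Kiselman}, and apply Lemma~\ref{lem:support}; your treatment of the equivalence between $\psi^+(t)>\langle t,J+\bone\rangle$ on the sphere and $J+\bone\in{\rm int}\,\Gamma_{\Psi^+}$ (via the margin $\delta$) spells out a step the paper only cites, and it correctly uses $\langle t,J+\bone\rangle$ where the paper's text has $\sum_k t_k$.
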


\begin{proof}
    We can assume $\Psi=\Psi^+$, so the corresponding convex function $\psi$ is the supporting function of the set $\Gamma=\Gamma_{\Psi^+}$. 
    
    Take any $J\in\Z_+^n$. By repeating arguments of the proof of Proposition~\ref{prop:ind_Kiselman}, we have
    \begin{eqnarray*}
  & & \frac1{(2\pi)^n}\int_{\Cn}|z^J|^2 e^{-2\Psi} \,\beta_{n}
=
\int_{\Rn}e^{-2[-\langle J+{\bf 1},t \rangle+\psi(t)]}
\,dt \\
&=&\int_{\Rn}e^{-2[h_A(t)+h_{\Gamma}(t)]}
\,dt =
\int_{\Rn}e^{-2h_{A+\Gamma}(t)}
\,dt
\end{eqnarray*}
where $h_{A}(t)$ is the supporting function of the single-point set $A=\{-J-{\bf 1}\}$.
By Lemma~\ref{lem:support}, the last integral converges if and only if $h_{A+\Gamma}>0$ on the unit sphere $S$, which is equivalent to 
\[
\psi(t)> \sum_k t_k\quad \forall t\in S,
\]
 which, precisely as in the proof of Proposition~\ref{prop:ind_Kiselman}, means $J+{\bf 1}\in {\rm Int}(\Gamma)$.
\end{proof}

\begin{theorem}\label{theo:How2}
    If $u\in\TLPSHs\Cn$ and $\LCT(u)<1$, then $\cP(u)=\cP(\Psi_u)$  and it is generated by monomials $z^J$ such that $J+ {\bf 1}$ belong to the interior of $\Gamma_{u,\infty}$.
\end{theorem}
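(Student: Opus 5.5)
The plan is to reduce everything to Proposition~\ref{prop:multind} by comparing $u$ with its indicator on both sides, as was done for \eqref{eq:torindeq}. First, by \eqref{eq:ind_bound}, $u\le\Psi_u+C$, so $|P|e^{-\Psi_u}\le e^{C}|P|e^{-u}$ and hence $\cP(u)\subseteq\cP(\Psi_u)$. For the reverse inclusion, I would use Theorem~\ref{thm:lbound}: for each $\epsilon>0$ there is $R$ with $u\ge(1-\epsilon)\Psi_u+C$ outside $\D^n_R$. Then any $P$ with $|P|e^{-(1-\epsilon)\Psi_u}\in L^2(\infty)$ lies in $\cP(u)$, that is, $\cP((1-\epsilon)\Psi_u)\subseteq\cP(u)$.

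Both $\cP(u)$ and the spaces $\cP(t\Psi_u)$ are monomial. For $\cP(u)$ this is Proposition~\ref{prop:monmult}; for $t\Psi_u$ it holds because $\LCT(t\Psi_u)=t^{-1}\LCT(u)<1$ for $t$ close to $1$, using \eqref{eq:torindeq}. So it suffices to compare monomials. Since all the functions are toric and the integrals are taken outside polydisks containing $\D^n$, the exterior of a compact set may be replaced by $\Cn\setminus\D^n_R$, and inside $\D^n_R$ the weight $e^{-2t\Psi^+}$ is bounded. By Proposition~\ref{prop:multind} applied to the indicator $t\Psi_u$, whose diagram at infinity is $t\Gamma_{u,\infty}$, we get
\[
z^J\in\cP(t\Psi_u)\iff J+\bone\in{\rm int}\,(t\Gamma_{u,\infty}).
\]

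The main step is then the openness-type observation. If $J+\bone\in{\rm int}\,\Gamma_{u,\infty}$, then $J+\bone\in{\rm int}\,((1-\epsilon)\Gamma_{u,\infty})$ for all small $\epsilon$. This holds because $\Gamma_{u,\infty}$ contains a neighborhood of $0$ in $\overline\Rnp$ and is convex, so scaling toward $0$ by a factor close to $1$ keeps a fixed interior point in the interior. To see this, note that the interior point $J+\bone$ lies in $\Rnp$, and $(1-\epsilon)^{-1}(J+\bone)$ is still in the open set ${\rm int}\,\Gamma_{u,\infty}$ for small $\epsilon$. Hence $z^J\in\cP((1-\epsilon)\Psi_u)\subseteq\cP(u)$. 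Combined with the first inclusion, this gives
\[
\cP(u)=\cP(\Psi_u)=\mathrm{span}\{z^J:\ J+\bone\in{\rm int}\,\Gamma_{u,\infty}\}.
\]

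The one delicate point I expect is justifying that the integrability criterion in Proposition~\ref{prop:multind} (integrals over $\Cn$ of $|z^J|^2e^{-2\Psi^+}$) is equivalent to integrability near infinity of $|z^J|^2e^{-2\Psi}$. This follows because $\Psi=\Psi^+$ off $\D^n$ and $|z^J|^2e^{-2\Psi^+}$ is bounded on $\D^n$. I also need to check that $\Psi_{(1-\epsilon)\Psi_u}=(1-\epsilon)\Psi_u$, which is immediate from homogeneity.
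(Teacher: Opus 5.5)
Your proof is correct and follows the paper's own argument. Outside $\D^n_R$ you sandwich $u$ between $(1-\epsilon)\Psi_u+C$ and $\Psi_u+C$, using Theorem~\ref{thm:lbound} and \eqref{eq:ind_bound}, and then close with the monomial criterion of Proposition~\ref{prop:multind} and the observation that $(1-\epsilon)^{-1}(J+\bone)$ stays in the open set ${\rm int}\,\Gamma_{u,\infty}$. Your chain $\cP((1-\epsilon)\Psi_u)\subseteq\cP(u)\subseteq\cP(\Psi_u)$ is the correctly oriented one (the paper's displayed inclusions are written in reverse), and your openness step is exactly what makes the equality $\cP(u)=\cP(\Psi_u)$ go through.
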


\begin{proof}
    Take any $\epsilon\in (0,1)$. Since $(1-\epsilon)\Psi_u+C\le u\le\Psi_u +C$ on $\Cn\setminus\D_R^n$ for some $R>1$, we have 
    \[
    \cP(\Psi_u)\subset\cP(u)\subset \cP((1-\epsilon)\Psi_u).
    \]
    By Theorem~\ref{thm:lbound}, 
    $\cP(u)\subset \cP(c\Psi)$ for any positive $c<1$.
        By Proposition~\ref{prop:multind}, we have $z^J\not\in \cP(\Psi_u)$ if and only if $J+ {\bf 1}$ does not belong to the interior of $\Gamma_{\Psi_u^+}$ and, therefore, to the interior of $(1-\epsilon)\Gamma_{\Psi_u^+}=\Gamma_{(1-\epsilon)\Psi_u^+}$ for some $\epsilon$. This gives $\cP(\Psi_u)=\cP(u)$ and, since $\Gamma_{\Psi_u^+}=\Gamma_{u,\infty}$, proves the theorem.
\end{proof}

\begin{remark}\label{rem:How}
    A local version of this result, usually called Howald's theorem, was established in \cite{Howald} (for multipliers of monomial ideals) and in \cite{Gue}, \cite{Rash13} in the plurisubharmonic setting. It says that a monomial $z^J$ belongs to the multiplier ideal of a toric plurisubharmonic function if and only if $J+\bone$ belongs to the interior of the indicator diagram $\Gamma_{u,0}$ of $u$ at $0$.
\end{remark}

For any $u\in\LPSH\Cn$, one can also consider the subspace $\wcP(u)$ of $\cP(u)$, defined by the condition 
\beq\label{eq:globmultiplier}
|P|e^{-u}\in L^2(\Cn).
\eeq
In the toric case, Theorem~\ref{theo:How2} and Remark~\ref{rem:How} imply

\begin{corollary}
    Let $u\in\TLPSHs\Cn$. Then a monomial $z^J$ belongs to $\wcP(u)$ if and only if $J+ {\bf 1}$ belongs to the interior of $\Gamma_{u,\infty}\cap \Gamma_{u,0}$.
\end{corollary}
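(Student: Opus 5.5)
Since $|P|^2e^{-2u}$ is locally integrable away from the possible singularities, and the only possible singularity of a function in $\TLPSHs\Cn$ is at the origin, I will prove the statement by splitting $\Cn$ into a neighborhood of $0$ and a neighborhood of infinity. The key is the orthogonality of monomials on toric domains.

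Fix $R>1$. Since $u$ is toric, $u\in\LPSHs\Cn$ is locally bounded off $u^{-1}(-\infty)$, and $u^{-1}(-\infty)\subset\{0\}$: indeed $\widehat u$ is convex and finite-valued on $\Rn$, and $u(z)=\widehat u(\log|z_1|,\ldots,\log|z_n|)$ only fails to be finite on coordinate hyperplanes, where it is given by monotone limits. Here I would quickly justify that the intersection of $u^{-1}(-\infty)$ with the coordinate planes reduces to $\{0\}$ or is empty, using the remark after Theorem~\ref{thm:toric_Kiselman} that $0$ is the only possible singularity. Then
\[
\int_{\Cn}|z^J|^2e^{-2u}\,\beta_n<\infty
\iff
\int_{\D_R^n}|z^J|^2e^{-2u}\,\beta_n<\infty
\ \text{and}\
\int_{\Cn\setminus\D_R^n}|z^J|^2e^{-2u}\,\beta_n<\infty .
\]
The integral over the compact shell $\overline{\D_R^n}\setminus\D_\delta^n$ is finite because $u$ is bounded below there.

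For the part at infinity, Theorem~\ref{theo:How2} (applied under its hypothesis $\LCT(u)<1$, which is implicitly needed for $\cP(u)\neq\{0\}$ to be meaningful) gives $z^J\in\cP(u)$ if and only if $J+\bone\in{\rm int}\,\Gamma_{u,\infty}$. If one wishes to avoid that hypothesis, I would instead rerun the computation in the proof of Proposition~\ref{prop:multind} directly for $\Psi_u$, combined with the two-sided comparison $(1-\epsilon)\Psi_u+C\le u\le\Psi_u+C$ outside $\D_R^n$ from Theorem~\ref{thm:lbound}. This yields the same interior condition, since the open condition is stable under the dilation by $1-\epsilon$.

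For the local part, I invoke the toric Howald theorem recalled in Remark~\ref{rem:How}: $z^J$ is in the multiplier ideal of $u$ at $0$, i.e.\ $|z^J|e^{-u}\in L^2$ near $0$, if and only if $J+\bone\in{\rm int}\,\Gamma_{u,0}$.

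Combining, $z^J\in\wcP(u)$ if and only if $J+\bone$ lies in ${\rm int}\,\Gamma_{u,\infty}\cap{\rm int}\,\Gamma_{u,0}={\rm int}(\Gamma_{u,\infty}\cap\Gamma_{u,0})$, since the interior of a finite intersection is the intersection of the interiors. The main (mild) obstacle is the bookkeeping of where $u$ can be unbounded below, i.e.\ ensuring the middle region contributes nothing. I would handle this via the toric structure: $u\ge \sup_{\D^n_\delta}$-type bounds on $\{\max_k|z_k|\ge\delta\}$ follow from monotonicity of $\widehat u$ in each variable together with $u$ being exhaustive.
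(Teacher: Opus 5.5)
Your proposal is correct and is essentially the paper's own argument: Theorem~\ref{theo:How2} (that is, Proposition~\ref{prop:multind} combined with Theorem~\ref{thm:lbound}) handles the region near infinity, the toric Howald theorem of Remark~\ref{rem:How} handles the origin, and the two are glued using that $u$ is locally bounded below away from $0$ and that $\mathrm{int}(A\cap B)=\mathrm{int}\,A\cap\mathrm{int}\,B$. Two small fixes: the hypothesis $\LCT(u)<1$ is \emph{not} needed for $\cP(u)\neq\{0\}$ (for $u=\frac1{10}\log|(1,z_1,z_2,z_1^{20}z_2^{80})|$ one has $\bone\notin\Gamma_{u,\infty}$ but $(0,3)+\bone\in{\rm int}\,\Gamma_{u,\infty}$, so $z_2^3\in\cP(u)$), so you should rely on your fallback route, which is exactly the proof of Theorem~\ref{theo:How2} and never uses that hypothesis; and on the middle shell the correct lower bound is $u(z)\ge\min_k u(\delta e_k)>-\infty$, which follows from monotonicity in each $|z_k|$ and exhaustiveness along the coordinate axes, rather than a $\sup_{\D^n_\delta}$-type bound.
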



\section{Bergman functions and analytic approximation}\label{sec:appr}

Hilbert spaces of polynomials in $\Cn$ and corresponding Bergman functions were considered by several authors, mainly with respect to compactly supported measures; see, among many others, \cite{BlLe}, \cite{BlSh},  \cite{MaVu}, \cite{ShZe1}, \cite{ShZe2}. We will use here a non-compact approach initiated in \cite{Berman}, see more details in Remark~\ref{rem:Berman}.

Given $u\in\LPSH\Cn$, let $\wcP(u)$ be the vector space of polynomials $P$ satisfying \eqref{eq:globmultiplier}. If it is nontrivial, then $\wcP(u)$ is a Hilbert space of finite dimension $N_u$, with the inner product
\[
\langle P,Q \rangle_u = \int_\Cn P \overline{Q} e^{-2u}\,\beta_n.
\]
For an orthonormal basis $\{p_l\}_1^{N_u}$ of $\wcP(u)$, the function 
\[
K_{u}(z,w)=\sum_1^{N_u} p_l(z)\overline {p_l(w)}
\]
is the reproducing kernel for the point evaluation functional $P\mapsto P(z)$:
\[
P(z)=\langle P(w),K_u(z,w) \rangle_u \quad \forall z\in\Cn. 
\]

\begin{definition}
The function 
\[
B_{u}(z):=K_{u}(z,z)=\sum_1^{N_u} |p_l(z)|^2
\]
is the {\it Bergman kernel function of $u$ in $\Cn$}.
\end{definition}

Since $B_u(z)^{1/2}$ is the norm of the evaluation functional, 
\beq\label{eq:eval}
B_u(z)=\sup\{|P(z)|:\: \|P\|_u\le 1\}.
\eeq
We put 
\beq\label{eq:um}
u_m(z)=\frac1{2m}B_{mu}(z),\ m>0.
\eeq

A local variant of this procedure, with integration over a bounded domain $\Omega$ of $\Cn$, was used by Demailly \cite{D92} for constructing an approximation of any function $u\in\PSH\Omega$ by functions $u_m\in\PSH\Omega$ with analytic singularities. The approximants $u_m$ were shown to converge to $u$ pointwise and in $L_{loc}^1$ because
\beq\label{eq:Dappro}
u(z)-\frac{C_1}{m} \le u_m(z)\le \sup_{\zeta\in B_r(z)}u(\zeta)
+\frac1{m}\log \frac{C_2}{ r^n}
\eeq
for some constants $C_1$ and $C_2$ independent of $m$ and $m$. They also keep control on its singularities: $\nu_a(u_m)\to\nu_a(u)$ and $c_a(u_m)\to c_a(u)$ for every $a\in \Omega$ \cite{DK}.

One can show that the right inequality in \eqref{eq:Dappro} holds as well in the global case. Indeed, for any $P\in\wcP({mu})$ with $\|P\|_{mu}\le 1$, we have, by the mean value property,
$$|P(z)|^2\le\frac1{V_nr^{2n}}\int_{\B_r(z)}|P|^2\,\beta_n\le \frac{||P||_{mu}^2}{V_nr^{2n}}\,\exp\{2m\sup_{\B_r(z)} u\},
$$
$V_n$ being the Euclidean volume of the unit ball in $\Cn$. By (\ref{eq:eval}), this gives us the second inequality in (\ref{eq:Dappro}).

The proof of the left inequality in \eqref{eq:Dappro} is based on a particular case of the Ohsawa-Takegoshi theorem on extension of holomorphic functions in weighted spaces. It says that if $u$ is a plurisubharmonic function in a bounded domain $\Omega$, then for any $z\in\Omega$ and any $a\in\C$ there exists a holomorphic function $f$ in $\Omega$ such that 
$f(z)=a$ and
\[
\int_\Omega |f(\zeta)|^2e^{-2u(\zeta)}\beta_n\le A \|a\|^2 e^{-2u(z)}
\]
with constant $A$ independent of $u$, $a$ and $z$; in particular, $f$ belongs to the multiplier ideal sheaf of $u$ in $\Omega$. The argument works only for bounded domains, which does not fit our setting. Instead, we are going to use its version due to Manivel \cite{Man}, according to which for any $z\in\Cn$, $a\in\C$ and $\kappa>n$ there exists a polynomial $P$ such that $P(z)=a$ and
\beq\label{eq:Man}
\int_\Cn \frac{|P(\zeta)|^2e^{-2u(\zeta)}}{(1+|\zeta|^2)^{\kappa}}
\beta_n\le A |a|^2e^{-2u(z)}
\eeq
with a constant $A$ independent of $u$, $a$ and $z$. 

Note that \eqref{eq:Man} does not imply $P\in\wcP(u)$, only that
$P\in\wcP(u+\kappa\Lambda)$ with $\Lambda(z)=\frac12 
\log(1+|z|^2)\in\LPSHs\Cn$. That is why we modify the construction of the approximants $u_m$ as follows. 

We fix any $\kappa>n$ (for example, $\kappa=n+1$) and consider the Bergman functions $B_{m}$ of the Hilbert spaces 
\[
\cH_{m}=\wcP(mu+\kappa\Lambda)
\]
(non-trivial in view of \eqref{eq:Man}) with the corresponding norm denoted for brevity by $\|\cdot,\|_m$,
and define 
\beq\label{eq:umkappa}
u_m(z)=\frac1{2m}\log B_{m}(z),\ m>0.
\eeq
For any $P\in\cH_{m}$, we have, as before,
\[
|P(z)|^2\le\frac1{V_nr^{2n}}\int_{\B_r(z)}|P|^2\,\beta_n\le \frac{||P||_{m}^2}{V_nr^{2n}}\,\exp\{2\sup_{\B_r(z)} (mu+\kappa\Lambda)\}
\]
This, by \eqref{eq:eval}, gives us
\beq\label{eq:Dkappaleft}
u_m(z)\le \sup_{\zeta\in B_r(z)}u(\zeta)+\frac1m \left[\kappa\Lambda(|z|+r) 
+\log \frac{C_2}{ r^n}\right]
\eeq
with $C_2=V_n^{-1/2}$.

To get an analog of the left inequality in \eqref{eq:Dappro}, we apply \eqref{eq:Man} to any $z\in\Cn$ such that $u(z)\neq-\infty$ with $a=A^{-1/2}e^{mu(z)}$. Then the right-hand side of \eqref{eq:Man} equals $1$ and thus, the corresponding polynomial $P_m\in\cH_{m}$ has the norm not exceeding $1$. Therefore,
\[
u_m(z)\ge \frac{\log|P_m(z)|}{m}=\frac{\log|a|}{m}=u(z)-\frac{C_1}{m}
\]
with $C_1=\frac12 \log A$.

This proves part (i) of the following theorem which is a global counterpart of Demailly's approximation theorem \cite[Proposition 3.1]{D92}, \cite[Theorem 4.2]{DK}. 

\begin{theorem}\label{thm:Deminf}
    Let $u\in\LPSH\Cn$ and $\kappa>n$. Then for any $m>0$ there exist polynomials $p_l^{(m)}$, $l=1,\ldots,N^{(m)}$, such that the function 
    \[
    u_m=\frac1{2m} \log\sum_{l=1}^{N^{(m)}}|p_l^{(m)}|^2\in\LPSH\Cn
    \]
    has the following properties:
    \begin{enumerate}
        \item[(i)] there exists constants $C_1,C_2>0$, independent of $u$ and $m$, such that 
      \beq\label{eq:Mappro}
u(z)-\frac{C_1}{m} \le u_m(z)\le \sup_{\zeta\in B_r(z)}u(\zeta)
+\frac1m \left[\kappa\Lambda(|z|+r) 
+\log \frac{C_2}{ r^n}\right]
\eeq  
for any $z\in\Cn$; in particular, $u_m$ converge to $u$ pointwise and in $L_{loc}^1(\Cn)$; 
\item[(ii)] if $u\in\LPSHs\Cn$, then $u_m\in\LPSHs\Cn$;
      \item[(iii)]  $\sigma_u\le\sigma_{u_m}\le \sigma_u+\frac1m$;
      \item[(iv)] $\LCT(u_m)\le \LCT(u)$; 
      \item[(v)] if $u$ has positive Łojasiewicz exponent, then $\LCT(u_m)\to \LCT(u)$ as $m\to\infty$.
    \end{enumerate}
    \end{theorem}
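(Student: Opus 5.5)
Part (i) is already established, so the remaining items follow from the two-sided estimate \eqref{eq:Mappro}, the comparison rule \eqref{eq:order}, Proposition~\ref{prop:comparison}(ii), and the degree bound for multipliers from Section~\ref{sec:multipliers}. For (ii), the left inequality $u_m\ge u-C_1/m$ shows at once that $u_m\to+\infty$ at infinity whenever $u$ does. Hence $u_m\in\LPSHs\Cn$, since $u_m$ is clearly of logarithmic growth. For (iv), the same inequality reads $u\le u_m+O(1)$, and \eqref{eq:order} gives $\LCT(u)\ge\LCT(u_m)$ directly.

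For (iii), the lower bound $\sigma_u\le\sigma_{u_m}$ again follows from $u\le u_m+C_1/m$. I would not use the right-hand inequality of \eqref{eq:Mappro} for the upper bound, because it only yields $\sigma_u+\kappa/m$. Instead I will use the multiplier proposition of Section~\ref{sec:multipliers}, applied to the weight $mu+\kappa\Lambda$, whose logarithmic type is $m\sigma_u+\kappa$. Every $P\in\cH_m$ has degree at most $m\sigma_u+\kappa-n$. Therefore $\frac1{2m}\log\sum_l|p_l^{(m)}|^2\le \frac{m\sigma_u+\kappa-n}{m}\log|z|+O(1)$, which gives $\sigma_{u_m}\le\sigma_u+(\kappa-n)/m$. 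With the normalization $\kappa=n+1$ this is the stated bound $\sigma_u+1/m$; for general $\kappa$ the correction is of order $1/m$.

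For (v), by (iv) it suffices to show that $\liminf_m\LCT(u_m)\ge\LCT(u)$. I will aim for an upper bound of the form $u_m\le(1+\delta_m)u+O(1)$ outside a compact set, with $\delta_m\to0$, and then apply Proposition~\ref{prop:comparison}(ii) with $v=u$. The positive Łojasiewicz exponent $\LL:=\LL_\infty(u)$ gives $\log|z|\le u(z)/(\LL-\epsilon)+C$ for large $|z|$. This absorbs every logarithmic error term in \eqref{eq:Mappro}: the term $\frac{\kappa}{m}\Lambda(|z|+r)$ becomes $O(1/m)\,u$. The term $\frac1m\log(C_2/r^n)$ is absorbed in the same way even if we let $r=r(z)=|z|^{-N}$ shrink polynomially, since it is then $\frac{nN}{m}\log|z|+O(1)$.

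The main obstacle is the term $\sup_{B_{r}(z)}u$, which is not pointwise comparable to $u(z)$, and here the inequality goes the wrong way for non-integrability. My first attempt will be to compare $v_r(z):=\sup_{B_{r(z)}(z)}u$ with $u$ using logarithmic growth. Taking $r(z)=|z|^{-N}$, I will try to show that $\int e^{-2cv_r}=\infty$ whenever $\int e^{-2cu}=\infty$ at infinity, for $c<\LCT(u)$. The idea is to cover the sublevel sets of $u$ near infinity by small balls and use the sub-mean value property on slightly larger balls, trading a factor $|z|^{O(N)}$ that the Łojasiewicz bound converts into a factor $(1+O(1/m))$ in the exponent. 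If this fails, I would fall back on Siciak-type monotone approximation: replace $u$ by $\max_j c_j\log|P_j|$, for which the sup over a small ball is controlled by $\log|P_j(z)|$ up to $O(\log|z|)$. I would then pass to the limit using the monotonicity of $\LCT$ from \eqref{eq:order}.
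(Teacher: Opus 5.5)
Your arguments for (ii), (iii) and (iv) are fine. For (iii), the right-hand inequality in \eqref{eq:Mappro} actually does as well as your route: with $r\asymp|z|$ the term $\frac1m\log(C_2r^{-n})$ contributes $-\frac nm\log|z|$, so it gives $\sigma_{u_m}\le\sigma_u+\frac{\kappa-n}m$. This is exactly your multiplier-degree bound, which is itself the same mean-value estimate with an optimized radius.

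The gap is in (v).

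\emph{The pointwise route.} Proposition~\ref{prop:comparison}(ii) needs a pointwise bound $u_m\le(1+\delta_m)u+O(1)$ near infinity. No such bound holds in general. A function with positive \L ojasiewicz exponent can drop by an amount $\asymp\log|z|$ on balls of super-polynomially small radius, for instance through terms $c_k\max\{\log|z-a_k|,-M_k\}$ with $c_k\to0$ and $c_kM_k\asymp\log|a_k|$. For fixed $m$ the $L^2$ norm of $\cH_m$ does not see such dips, so $u_m(a_k)-u(a_k)$ stays of order $\log|a_k|$.

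\emph{Your first attempt.} You correctly move to an integral comparison between $u$ and $v_r=\sup_{B_{r(z)}(z)}u$, but the step that carries all the weight is missing. The sub-mean value inequality only bounds $u$ from above by averages. It therefore cannot show that $v_r$ is small on a set carrying a non-negligible part of $\int e^{-2cu}$. To make this route work you would need a quantitative estimate: the set $\{w:\ \sup_{B_\rho(w)}u-u(w)>\eta\log|w|\}$ has measure $O(\rho)$ per unit ball near infinity. This can be obtained from the Riesz decomposition of $u$ on balls $B(a,2)$, normalized by $\log|a|$, together with Chebyshev's inequality. With $\rho=|z|^{-N}$, $N>2n$, the \L ojasiewicz bound then makes the contribution of this set summable. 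Nothing in your sketch supplies this.

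\emph{Your fallback.} It fails outright. From Siciak approximants $w_k\downarrow u$ you get at best $\liminf_m\LCT(u_m)\ge\LCT(w_k)$. Monotonicity gives only $\LCT(w_k)\le\LCT(u)$, which is the wrong direction. Moreover, $\LCT(w_k)\to\LCT(u)$ along decreasing sequences is false in general. For $u=\log\max\{|z_1|,|z_2|^N,|z_1z_2^N|,1\}$ in $\C^2$ with $N\ge2$ one has $\LCT(u)=1$. Yet $w_k=\max\{u,(N+1)\log|z|-k\}\downarrow u$ has $\LCT(w_k)=2/(N+1)$ for all $k$.

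The idea your proposal lacks is the Demailly--Koll\'ar argument, which avoids pointwise upper bounds altogether. It uses the identity $\int_{\Cn}B_m e^{-2mu}(1+|z|^2)^{-\kappa}\beta_n=N^{(m)}$ and H\"older's inequality with $p=1+m/c$, $q=1+c/m$ to get
\[
\int_{\Cn\setminus\B_R}e^{-2mu/p}\,\beta_n\le \bigl(N^{(m)}\bigr)^{1/p}\Bigl(\int_{\Cn\setminus\B_R}(1+|z|^2)^{\kappa c/m}e^{-2cu_m}\,\beta_n\Bigr)^{1/q}.
\]
The polynomial factor is absorbed into $e^{-2\epsilon u_m}$ using $u_m\ge u-C_1/m\ge\alpha\log|z|-C$. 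This gives $\lambda_\infty(u)\ge\frac1c+\frac1m$ for $c$ slightly above $\LCT(u_m)$, hence $\liminf_m\LCT(u_m)\ge\LCT(u)$.
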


\begin{proof} Assertions (ii) and (iv) follow trivially from the first inequality in \eqref{eq:Mappro}, and it also implies the first inequality of (iii). By the second inequality in \eqref{eq:Mappro}, we get for any $r>1$,
\[
\sup_{z\in B_r(0)}u(z) \le \sup_{z\in B_{2r}(0)}u(z) + \frac1{2m}\log(1+4r^2).
\]
By dividing the both sides by $\log r$ and letting $r\to\infty$, we get the second inequality of (iii).

        To prove (v), we assume 
    \beq\label{eq:Loja} u(z)\ge 2\alpha\Lambda(z)
    \eeq
    for all $z$ with $|z|\ge R$ for some $\alpha>0$ and $R>>0$; note that, in particular, $u\in\LPSHs\Cn$.
        Take any $\epsilon>0$. By the H\"older inequality with $p=1+m/c$ and $q=1+c/m$   with any $c\in(\LCT(u_m)+\epsilon, \LCT(u_m)+2\epsilon)$, we have 
\begin{eqnarray*}
\int_{\Cn\setminus\B_R} e^{-\frac{2mu}{p}} \beta_n &=& \int_{\Cn\setminus\B_R} \left[\frac{e^{-2mu}B_m}{(1+|z|^2)^\kappa}\right]^{\frac1p}
\left[\frac{(1+|z|^2)^{\kappa}}{B_m}\right]^{\frac{c}{qm}}\beta_n\\
&\le &  C_m \left(\int_{\Cn\setminus\B_R}
(1+|z|^2)^{\frac{\kappa c}{m}}e^{-2cu_m}\beta_n\right)^{\frac1q}\\
&\le & C_m  \left(\int_{\Cn\setminus\B_R}
(1+|z|^2)^{\frac{\kappa c}{m}-2\epsilon\alpha}e^{-2(c-\epsilon)u_m}\beta_n\right)^{\frac1q}
\end{eqnarray*}
where $C_m=(N^{(m)}+1)^{\frac1p}$, $N_m$ being the dimension of $\cH_m$; the last inequality comes from \eqref{eq:Loja}.

Since $\LCT(u)\ge \LCT(u_m)$, we have $\kappa c/m<\epsilon\alpha$
for any 
\[
m>\frac{\kappa (\LCT(u)+2\epsilon)}{2\epsilon\alpha}.
\]
For all such $m$, the last integral is less than
\[
\int_{\Cn\setminus\B_R}
e^{-2(c-\epsilon)u_m}\beta_n,
\]
 which is finite because $c-\epsilon>\LCT(u_m)$. Therefore,   
 \[
 1/{\LCT(u)}=\lambda_\infty(u)\ge \frac{p}{m}=\frac1c+\frac1m\ge \frac1{\LCT(u_m)+2\epsilon}+\frac1m
 \]
 and so, 
 \[
 \liminf_{m\to\infty}\LCT(u_m)\ge \LCT(u)-2\epsilon,
 \]
 which, in view of (iv), implies $\LCT(u_m)\to \LCT(u)$.
 \end{proof}

\begin{remark}
    In the proof of (iv), one can choose, for any $s\in(0,1)$,
    $\epsilon =\epsilon_{m,s}=2m^{-s}$ and get the explicit bound
    \[
    \frac1{\LCT(u_m)+m^{-s}}\le \frac1{\LCT(u)}-\frac1m
    \]
for all $m$ such that $m^s>\kappa \LCT(u)/\alpha$, where $\alpha>0$ is the value from \eqref{eq:Loja}; cf. \eqref{eq:Dapprolct} below for the local case.
\end{remark}

\begin{remark}
    Inequalities \eqref{eq:Mappro} imply, as in Demailly's approximation theorem on bounded domains, that for any $a\in\Cn$,
   \[ \nu_a(u)-\frac{n}{ m}\le\nu_a(u_m)\le\nu_a(u) 
   \]
and
\beq\label{eq:Dapprolct}
\frac1{c_a(u)}-\frac1{m}\le \frac1{c_a(u_m)}\le \frac1{c_a(u)},
\eeq
with the same proofs as in \cite{D92} and \cite{DK}.
\end{remark}

\begin{remark}\label{rem:Berman}
    A procedure of constructing a Bergman function for weighted polynomial spaces in $\Cn$ with the weights $e^{-m\phi}$  was suggested in \cite{Berman}. There, the function $\phi$ was assumed to satisfy the growth condition $\phi\ge (1+\epsilon)\Lambda$ near infinity and to be $C^{1,1}$-smooth in $\Cn$. The main difference, however, with our setting is that the Hilbert spaces $\cH_m$ in \cite{Berman} -- as in other related papers -- were formed by polynomials $P$ satisfying, in addition to the integrability condition, the restriction $\deg P< m$. This resulted in the conclusion 
   $ u_m\le \Lambda + O(1)$
   for the corresponding functions $u_m$ 
        and in their convergence to a function $\phi_e$,  the largest plurisubharmonic function not exceeding $\min\{\phi,\Lambda + O(1)\}$.
    
\end{remark}

\section{Log canonical thresholds and Monge-Ampère masses}

If the  Monge-Ampère operator $(dd^cu)^n$ is well defined near a point $\zeta\in\Cn$, the log canonical threshold $c_\zeta(u)$ is nicely related to the residual Monge-Ampère mass 
$\tau_a(u)=(dd^cu)^n(\{\zeta\})$
of $u$ at $\zeta$: 
\beq\label{eq:dFEMD}
c_\zeta(u)\ge n\,\tau_\zeta(u)^{-\frac1n}.
\eeq
The inequality was originally proved in \cite{dFEM1}, \cite{dFEM2} in the case of ideals of holomorphic germs and then extended in \cite{D09} to plurisubharmonic functions with isolated singularities (actually, the formula relating the Samuel multiplicity of the local ideal $\cI_\zeta(F)$ with the residual Monge-Ampère mass of $(dd^c\log|F|)^n$ at $\zeta$, mentioned in Section~\ref{sect:pshpol}, was proved in \cite{D09} exactly for this extension). 
Later on, the inequality was proved without referring to the case of ideals \cite{ACKPZ}, and a stronger relation was established in \cite{DP}. In \cite{Rash15}, it was shown that equality in \eqref{eq:dFEMD} occurs if and only if $\Psi_{u,\zeta}(z)=S_{a}(z-\zeta)$ \eqref{eq:phia} for $a=\alpha\bone$ with some $\alpha>0$.

In the global setting of the class $\LPSHs\Cn$, the counterpart of the residual mass at a point is naturally the {\it total Monge-Ampère mass} 
\[
\m_n(u):=\int_\Cn (dd^c u)^n
\]
or, more generally, the {\it total mixed Monge-Ampère masses} 
   $$
    \m_k(u)=\int_\Cn (dd^c u)^k\wedge (dd^c\log|z|)^{n-k},\quad 0\le k\le n.
    $$
    In particular, and $\m_0(u)=\m_n(\log|z|)=1$. (Note that the Monge-Ampère operator is well-defined on all functions $u\in\LPSHs\Cn$ because $u^{-1}(-\infty)\Subset\Cn$.)

\medskip

   It follows from a mixed products version of B.A Taylor's comparison theorem) \cite[Theorem 3.1]{Rash03} that if 
   $u_k,v_k\in\LPSHs\Cn$, $1\le k\le n$, are such that for any $\eta>0$,
        \[
        \limsup_{|z|\to\infty}\frac{u_k(z)}{v_k(z)+\eta\log|z|}\le l_k<\infty,
        \]
        then 
        \[
        \int_\Cn dd^cu_1\wedge\ldots\wedge dd^cu_n\le
        l_1\cdot\ldots\cdot l_n\int_\Cn dd^cv_1\wedge\ldots\wedge dd^cv_n<\infty.
        \]
   This gives us immediately $\m_n(u)=\m_n(u^+)$ and, since $u\le\Psi_u+O(1)$, the relations $\m_n(u)\le\m_n(\Psi_u)=\m_n(\Psi_u^+)$ for any $u\in\LPSHs\Cn$ and, more generally,
    \beq\label{eq:totalMA}
    \m_k(u)\le\m_k(\Psi_u)=\m_k(\Psi_u^+), \quad 1\le k\le n,
    \eeq
see \cite[Prop. 4.1]{Rash03}.

\begin{remark}
    The relation $\m_n(u)=\m_n(u^+)$ is not in general true for $u\in\LPSH\Cn\setminus\LPSHs\Cn$ with well-defined Monge-Ampère measure. For example, if $u(z_1,z_2)=\log(|z_1|^2+|z_1z_2+1|^2)$, then $\m_2(u)=0<4=\m_2(u^+)$, see \cite[p. 174]{Rash03}.
    \end{remark}

If $u\in\TLPSHs\Cn$, inequality \eqref{eq:lbound} gives us
     $\m_k(u)\ge(1-2\epsilon)^k\,\m_k(\Psi_u^+)$ and so, since $\epsilon>0$ is arbitrary, $\m_k(u)\ge\m_k(\Psi_u)$. Combining it with the reverse inequality \eqref{eq:totalMA}, we get

\begin{theorem}\label{thm:fullmasses}
    For any $u\in\TLPSHs\Cn$, we have $$\m_k(u)=\m_k(\Psi_u), \quad 1\le k\le n.$$
\end{theorem}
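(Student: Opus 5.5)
The plan is to prove the two inequalities $\m_k(u)\le\m_k(\Psi_u)$ and $\m_k(u)\ge\m_k(\Psi_u)$ separately, each by Taylor's comparison theorem in the mixed form \cite[Theorem 3.1]{Rash03} recalled above; the upper bound is already \eqref{eq:totalMA}. Set $u_1=\ldots=u_k=u$, $v_1=\ldots=v_k=\Psi_u^+$ (or the reverse roles), and $u_j=v_j=\log|z|$ for $k<j\le n$. All these functions lie in $\LPSHs\Cn$: $\Psi_u$ does because $u$ does, by \eqref{eq:ind_bound}. So every mixed mass involved is well defined and finite.

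\emph{Upper bound.} By \eqref{eq:ind_bound}, $u\le\Psi_u+C\le \Psi_u^++C$. Since $\Psi_u^+\to+\infty$ at infinity, $\limsup u/(\Psi_u^++\eta\log|z|)\le1$ for every $\eta>0$. The comparison theorem then gives $\m_k(u)\le\m_k(\Psi_u^+)$. Since $\Psi_u=\Psi_u^+$ outside $\D^n$, applying the comparison theorem in both directions gives $\m_k(\Psi_u^+)=\m_k(\Psi_u)$.

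\emph{Lower bound.} This is where the toric hypothesis enters, through Theorem~\ref{thm:lbound}. Fix $\epsilon\in(0,1)$. Then $u\ge(1-\epsilon)\Psi_u+C$ on $\Cn\setminus\D^n_R$. Because $u\to+\infty$, we have $u+\eta\log|z|\ge u$ for $|z|$ large, and hence
\[
\limsup_{z\to\infty}\frac{(1-\epsilon)\Psi_u^+(z)}{u(z)+\eta\log|z|}\le \limsup_{z\to\infty}\frac{(1-\epsilon)\Psi_u^+(z)}{(1-\epsilon)\Psi_u^+(z)+C}=1 .
\]
Here I use $\Psi_u^+\to+\infty$ to absorb the constant $C$. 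The comparison theorem with $u_j=(1-\epsilon)\Psi_u^+$, $v_j=u$ for $j\le k$ yields $(1-\epsilon)^k\m_k(\Psi_u^+)\le\m_k(u)$. This is a slightly better factor than the $(1-2\epsilon)^k$ mentioned before the statement, and either suffices. Letting $\epsilon\to0$ gives $\m_k(u)\ge\m_k(\Psi_u)$, and combining with the upper bound proves the theorem.

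The only delicate point is to check that the hypotheses of \cite[Theorem 3.1]{Rash03} hold exactly as stated. Specifically, the ratio condition must be verified with the extra $\eta\log|z|$ in the denominator, and the additive constants must be absorbed. Both follow from exhaustiveness of $u$ and of $\Psi_u^+$. One also needs that the factors $\log|z|$ in positions $j>k$ satisfy the condition trivially, with $l_j=1$. Note also that the bound of Theorem~\ref{thm:lbound} holds only outside $\D^n_R$, but the comparison condition is purely asymptotic, so this causes no difficulty.
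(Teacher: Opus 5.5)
Your route is the paper's own. The upper bound is \eqref{eq:totalMA}, and the lower bound comes from feeding Theorem~\ref{thm:lbound} into the same mixed comparison theorem. Granted Theorem~\ref{thm:lbound}, your check of the hypotheses is fine, and $(1-\epsilon)^k$ is indeed what it gives. The problem is that input: Theorem~\ref{thm:lbound} is false as stated, so your lower-bound step fails, and so does the paper's.

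Take $n=2$ and
\[
u(z)=\max\Big\{\tfrac12\log^+|z_1|,\ \log^+|z_2|,\ \sup_{k\ge1}\big(\log|z_1|+\tfrac1k\log|z_2|-k\big)\Big\}.
\]
On $(\C\setminus\{0\})^2$ the supremum is locally a finite maximum, and it tends to $-\infty$ at the coordinate axes. So $u$ is a continuous toric psh function. Since $\tfrac12\log^+\max_j|z_j|\le u\le 2\log^+|z|+O(1)$, we have $u\in\TLPSHs{\C^2}$. Its indicator has convex image
\[
\psi_u(t)=\lim_{c\to\infty}\widehat u(ct)/c=\max\{\tfrac12t_1^+,\,t_2^+,\,t_1+t_2^+\}=t_1^++t_2^+,
\]
that is, $\Psi_u=\log^+|z_1|+\log^+|z_2|$. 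Now take $z=(e^T,e^{-T^2})$. Every term of the supremum equals $T-(k+T^2/k)\le -T$, so $u(z)=T/2=\frac12\Psi_u(z)$ for all large $T$. Hence no bound $u\ge(1-\epsilon)\Psi_u+C$ with $\epsilon<\frac12$ holds outside any $\D_R^2$.

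The weaker asymptotic condition you actually use also fails, since $\limsup_{z\to\infty}\Psi_u^+/(u+\eta\log|z|)\ge(\frac12+\eta)^{-1}$. So comparison with $v=u$ yields at best $\m_k(\Psi_u)\le 2^k\m_k(u)$. The statement itself still holds in this example: $\m_2(u)=\m_2(\Psi_u)=2$.

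What does hold is a Legendre-dual substitute for Theorem~\ref{thm:lbound}. Put $g=\widehat{u^+}$. The support function of $\mathrm{dom}\,g^*$ is the recession function $\psi_u^+$ of $g$, so $\overline{\mathrm{dom}\,g^*}=\Gamma_{u,\infty}$. However, $\mathrm{dom}\,g^*$ need not be closed. A bound $g\ge(1-\epsilon)\psi_u^+-C$ would force $(1-\epsilon)\Gamma_{u,\infty}\subset\mathrm{dom}\,g^*$, whereas in the example $\mathrm{dom}\,g^*$ contains no point $(a_1,0)$ with $a_1>\frac12$.

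The repair is to use Fenchel--Young, $g(t)\ge\langle a,t\rangle-g^*(a)$ for $a\in\mathrm{dom}\,g^*$.
\begin{itemize}
\item Choose a finite set $A\subset\mathrm{dom}\,g^*$ containing $0$ and points $s_ke_k$ with $s_k>0$, such that $Q=\mathrm{conv}\,A$ is Hausdorff-close to $\Gamma_{u,\infty}$. The points $s_ke_k$ are available because $g$ is increasing in each variable and $u$ is exhaustive along each axis.
\item Then $u^+\ge\Psi_Q-C_A$, where $\Psi_Q(z)=\max_{a\in A}\sum_ja_j\log|z_j|\in\LPSHs\Cn$.
\item Your comparison step applies verbatim and gives $\m_k(u)\ge\m_k(\Psi_Q)$, which is $n!$ times the mixed volume of $k$ copies of $Q$ and $n-k$ copies of $\Delta$.
\item Letting $Q\to\Gamma_{u,\infty}$ and using continuity of mixed volumes gives $\m_k(u)\ge\m_k(\Psi_u)$.
\end{itemize}
For $k=n$ you can also argue directly: $\m_n(u)=\m_n(u^+)=n!\,{\rm Vol}(\partial g(\Rn))$, and $\mathrm{ri}\,\mathrm{dom}\,g^*\subset\partial g(\Rn)\subset\mathrm{dom}\,g^*$ gives ${\rm Vol}(\partial g(\Rn))={\rm Vol}(\Gamma_{u,\infty})$.
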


This gives an efficient way of computing the total masses of the toric functions because, by \cite{Rash01}, 
\beq\label{eq:masspsi}
\m_n(\Psi_u)=n!\,{\rm Vol}(\Gamma_{u,\infty}),
\eeq
where $\Gamma_{u,\infty}=\Gamma_{\Psi_u^+}$ is the indicator diagram of $u$ at infinity. Similarly, $\m_k(u)$ equals $n!$ times the mixed volume of $k$ copies of $\Gamma_{u,\infty}$ and $n-k$ copies of the standard simplex $\Delta=\{a\in{\overline\Rnp}:\: \sum_j a_j\le 1\}$ \cite{Rash03}.

In particular, by Proposition~\ref{prop:globDem}, this gives a formula for the global multiplicity $\m(P)$ (see Definition~\ref{def:globmult}) of monomial maps $P\in\cPs$. Moreover, using arguments of the proof of Corollary~\ref{cor:NNDlct}, including \eqref{eq:spcl},  we deduce

\begin{corollary}
   Let $P:\C^n\to \C^q$ be a polynomial map, convenient and Newton
non-degenerate at infinity. Then 
\[
\m(P)=n!\,{\rm Vol}(\Gamma_{P,\infty}).
\]
\end{corollary}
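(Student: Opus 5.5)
The plan is to reduce the statement to the monomial case and then invoke Theorem~\ref{thm:fullmasses} together with \eqref{eq:masspsi} and Proposition~\ref{prop:globDem}.

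\textbf{Step 1: pass to a monomial map.} Since $P$ is convenient and Newton non-degenerate at infinity, it has positive Łojasiewicz exponent at infinity \cite{BH17}. Hence $P\in\cPs$, and in particular $P^{-1}(0)$ is finite. By \cite[Corollary 4.10]{BH17}, as used in \eqref{eq:spcl}, the special closure $\Sp(P)$ is spanned by the monomials $z^J$ with $J\in\Gamma_{P,\infty}$. Let $Q$ be the monomial map whose components are these monomials. By definition of special closure, there are constants $C,C'>0$ with
\[
C'|P(z)|\le|Q(z)|\le C|P(z)| \quad\text{for } |z|\gg1 .
\]
The left inequality holds because each component of $P$ is a linear combination of components of $Q$. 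In other words, $\log|P|=\log|Q|+O(1)$ near infinity, and both functions lie in $\LPSHs\Cn$.

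\textbf{Step 2: compare the total masses.} Apply the mixed-product version of Taylor's comparison theorem \cite[Theorem 3.1]{Rash03}, quoted above, in both directions with all $l_k=1$. This gives
\[
(dd^c\log|P|)^n(\Cn)=(dd^c\log|Q|)^n(\Cn).
\]
The hypothesis of that theorem is immediate: the ratio of the two functions, each perturbed by $\eta\log|z|$, tends to $1$, since both functions tend to $+\infty$ and differ by a bounded amount.

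\textbf{Step 3: evaluate the monomial case.} The function $\log|Q|$ is toric and lies in $\TLPSHs\Cn$. Its indicator diagram at infinity is the Newton polytope at infinity of $Q$, namely the convex hull of $\{0\}$ and the lattice points of $\Gamma_{P,\infty}$. This hull equals $\Gamma_{P,\infty}$, because $\Gamma_{P,\infty}$ is a lattice polytope that contains $0$. Theorem~\ref{thm:fullmasses} and \eqref{eq:masspsi} then give
\[
(dd^c\log|Q|)^n(\Cn)=n!\,{\rm Vol}(\Gamma_{P,\infty}).
\]
Finally, Proposition~\ref{prop:globDem} converts the left-hand side of Step 2 into $\m(P)$, using that $P^{-1}(0)$ is finite.

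\textbf{Main obstacle.} The delicate point is Step 2. One has to make sure the comparison theorem applies to two functions that are equivalent only near infinity, while their local singularities at the finitely many zeros may differ. This is harmless because the total mass depends only on behaviour at infinity: the cited comparison theorem only involves the $\limsup$ at infinity. The remaining checks are routine bookkeeping: that $Q\in\cPs$, which follows from $\log|Q|\ge\log|P|-C$ near infinity, and that the Newton polyhedron identification in Step 3 is correct.
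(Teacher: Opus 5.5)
Your proposal is correct and is essentially the paper's own argument. The paper likewise replaces $P$ by the monomial map $Q$ whose components span $\Sp(P)$ (the reduction behind \eqref{eq:spcl} in Corollary~\ref{cor:NNDlct}), so that $\log|P|=\log|Q|+O(1)$ at infinity and the total Monge--Amp\`ere masses agree, and then concludes via Theorem~\ref{thm:fullmasses}, \eqref{eq:masspsi} and Proposition~\ref{prop:globDem}; you have simply written out the steps the paper leaves implicit (finiteness of $P^{-1}(0)$, the two-sided bound $|P|\asymp|Q|$, and $\Gamma_{Q,\infty}=\Gamma_{P,\infty}$).
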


\medskip

We refer to Corollary 3.9 of \cite{BH} for an alternative proof of the above equality
and the characterization of the polynomial maps for which this equality holds. 

\medskip
Comparing with the local case \eqref{eq:dFEMD}, one may ask if there is a relation also between $\m_n(u)$ and $\LCT(u)$ for all $u\in\LPSHs\Cn$. 
It turns out that this is not generally the case, even for monomial maps.

\begin{example}\label{ex:antiD} For any $N\ge 2$,
    take 
    \[
    P_N(z_1,z_2)=(z_1,z_2,z_1^Nz_2^N, 1) \ {\rm and\ }
        Q_N(z_1,z_2)=(z_1,z_2^N,z_1z_2^N,1).
\]
For the functions $u_N=\frac1N\log|P_N|$ and $v_N=\log|Q_N|$ from $\LPSHs{\C^2}$, we have $\LCT(u_N)=\LCT(v_N)=1$ while $\m_n(u_N)=2/N$ and $\m_n(v_N)=2N$.
    \end{example}

However, there is a relation between $\m_n(u)$ and $\LCT(u)$ for $u\in\TLPSHs{\Cn}$, provided the indicator diagram $\Gamma_{u,\infty}$ is a lower set, see Definition~\ref{def:lower}. (Note that the complement to the indicator diagram $\Gamma_{u,0}$ is always a lower set and the indicator diagram $\Gamma_{u,0}$ itself is always an {\sl upper set}.)

\begin{proposition} \label{prop:FMDglob}
    For any $u\in\TLPSHs{\Cn}$ whose indicator diagram $\Gamma_{u,\infty}$ is  a lower set, we have
    \beq\label{eq:Dglob}
\LCT(u)\ge \left(\frac{n!}{\m_n(u)}\right)^{\frac1n}.
\eeq
Then inequality becomes equality if and only if  $\Gamma_{u,\infty}$ is a cube $[0,\alpha]^n$ (in which case $\LCT(u)=\alpha^{-1}$).
\end{proposition}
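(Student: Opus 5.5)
Reduce everything to convex geometry of $\Gamma=\Gamma_{u,\infty}$. By Theorem~\ref{thm:fullmasses} and \eqref{eq:masspsi}, $\m_n(u)=\m_n(\Psi_u)=n!\,{\rm Vol}(\Gamma)$. By Theorem~\ref{thm:toric_Kiselman} (equivalently \eqref{eq:KisHowglob}), $\lambda_\infty(u)=1/\LCT(u)=\sup\{\lambda>0:\ \lambda\bone\in\Gamma\}$. Since $\Gamma$ is a lower set, Lemma~\ref{lem:lowerset} and Corollary~\ref{cor:lctlowset} also give $\lambda_\infty(u)=\inf\{\psi_u(a):\ a\in\Sigma_\bone\}$. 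With these identifications, \eqref{eq:Dglob} is equivalent to
\[
{\rm Vol}(\Gamma)\ge \lambda_\infty(u)^n,
\]
so we must show that a lower convex set $\Gamma\subset\overline\Rnp$ whose diagonal point is $\lambda\bone$, $\lambda=\lambda_\infty(u)$, has volume at least $\lambda^n$.

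The key step uses the lower-set property. By closedness, $\lambda\bone\in\Gamma$. Definition~\ref{def:lower} then gives that every $b\in\Rnp$ with $b_k<\lambda$ for all $k$ lies in $\Gamma$, so the open cube $(0,\lambda)^n$ is contained in $\Gamma$. Hence ${\rm Vol}(\Gamma)\ge\lambda^n$, and the inequality follows. Convexity is not even needed here; it enters only in the equality case.

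For the equality case, first suppose $\Gamma=[0,\alpha]^n$. Then $\lambda=\alpha$ and ${\rm Vol}=\alpha^n$, so equality holds and $\LCT(u)=\alpha^{-1}$. Conversely, suppose ${\rm Vol}(\Gamma)=\lambda^n$ and $\Gamma\supset[0,\lambda]^n$. If $\Gamma$ contained a point $p\notin[0,\lambda]^n$, then the convex hull of $p$ and the cube would lie in $\Gamma$. This hull has positive volume outside the cube, because $p$ lies strictly beyond some facet hyperplane $\{a_k=\lambda\}$ and the cone from $p$ over that facet has positive $n$-volume. That contradicts ${\rm Vol}(\Gamma)=\lambda^n$, so $\Gamma=[0,\lambda]^n$.

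The main obstacle is the bookkeeping between the diagrams. Theorem~\ref{thm:toric_Kiselman} is stated with $\Gamma_{u,\infty}=\Gamma_{\Psi_u^+}$, and $\lambda_\infty$ is characterized through interior points. So one must check that $\sup\{\lambda:\ \lambda\bone\in\Gamma\}$ is attained, using compactness of $\Gamma$. One must also check that $\lambda>0$, i.e. that $\Gamma$ has nonempty interior, which holds because $\Gamma$ contains a neighborhood of $0$ in $\overline\Rnp$, as noted before Proposition~\ref{prop:ind_Kiselman}. After that the argument is the elementary volume comparison above.
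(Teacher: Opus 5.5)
Your proof is correct and is essentially the paper's argument: you identify $\m_n(u)=n!\,{\rm Vol}(\Gamma_{u,\infty})$ via Theorem~\ref{thm:fullmasses} and \eqref{eq:masspsi}, and you use the lower-set property to place the cube with diagonal point $\lambda_\infty(u)\bone$ inside $\Gamma_{u,\infty}$. The paper instead takes $c^{-1}\bone$ for every $c>\LCT(u)$ and lets $c\downarrow\LCT(u)$, which is equivalent to your closedness step, and your pyramid-over-a-facet argument for the equality case spells out a step the paper only asserts.
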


\begin{proof}     Take any $c>\LCT(u)$. By Theorem~\ref{thm:toric_Kiselman}, the point $c^{-1}\bf 1$ belongs to the interior of the indicator diagram $\Gamma_{u,\infty}$. Since $\Gamma_{u,\infty}$ is a lower set, we have $[0, c^{-1}]^n\subset\Gamma_{u,\infty}$, so ${\rm Vol}(\Gamma_{u,\infty})\ge c^{-n}$.
    By Theorem~\ref{thm:fullmasses} and \eqref{eq:masspsi}, $\m_n(u)=n!\,{\rm Vol}(\Gamma_{u,\infty})$. Taking the infimum over all $c>\LCT(u)$, we get \eqref{eq:Dglob}. Finally, we have equality in \eqref{eq:Dglob} if and only if ${\rm Vol}(\Gamma_{u,\infty})= c^{-n}$, which completes the proof.
\end{proof}

\begin{remark}
    Note, that unlike the local situation (see the first paragraph of this section), the extremal case is not the equilateral simplicial functions $S_{\alpha\bone}$, for which we have, by \eqref{eq:lct phia}-\eqref{eq:Sa},
    \[
    \LCT(S_{\alpha\bone})[\m_n(S_{\alpha\bone})]^{1/n}=n > (n!)^{1/n}.
    \] 
\end{remark}

By using arguments of the proof of Corollary~\ref{cor:NNDlct}, we get

\begin{corollary}   Let $P:\C^n\to \C^q$ be a polynomial map, convenient and Newton
non-degenerate at infinity. If its Newton polyhedron at $\infty$ is a lower set, then 
\[
\LCT(P)\ge \left(\frac{n!}{\m(P)}\right)^{\frac1n}.
\]

\end{corollary}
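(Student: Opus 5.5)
The plan is to deduce the statement from Proposition~\ref{prop:FMDglob}, transferring everything from $P$ to $\log|Q|$, where $Q$ is the monomial map whose components are $\{z^J:\ J\in\Gamma_{P,\infty}\cap\Z^n_{\ge0}\}$. The same device gave Corollary~\ref{cor:NNDlct} and the corollary $\m(P)=n!\,{\rm Vol}(\Gamma_{P,\infty})$.

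First, since $P$ is convenient and Newton non-degenerate at infinity, $P\in\cPs$, because $\LL_\infty(P)>0$. Relation \eqref{eq:spcl} then gives
\[
\LCT(P)=\LCT(\Psi_{\log|P|}).
\]
The indicator $\Psi_{\log|P|}$ is a toric function in $\LPSHs\Cn$, and it is its own indicator. Its indicator diagram at infinity is $\Gamma_{P,\infty}$, which by assumption is a lower set. Proposition~\ref{prop:FMDglob} applied to $u=\Psi_{\log|P|}$ therefore yields
\[
\LCT(\Psi_{\log|P|})\ge \left(\frac{n!}{\m_n(\Psi_{\log|P|})}\right)^{1/n}.
\]

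Second, we identify the mass. By \eqref{eq:masspsi},
\[
\m_n(\Psi_{\log|P|})=n!\,{\rm Vol}(\Gamma_{P,\infty}).
\]
By the preceding corollary, this equals $\m(P)$. Alternatively, combine Proposition~\ref{prop:globDem} with Theorem~\ref{thm:fullmasses} applied to $\log|Q|$, using that $\Gamma_{Q,\infty}=\Gamma_{P,\infty}$. Substituting into the inequality of the first step gives the claim.

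The main point to check is that $P^{-1}(0)$ is finite, so that $\m(P)$ is defined. This follows because $|P|\to\infty$ at infinity, so the zero set is a compact algebraic set and hence finite.

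A second point is that we need $\m(P)>0$, or else the bound must be read as trivial. This holds because convenience forces $\Gamma_{P,\infty}$ to have nonempty interior: it contains the points $r_ke_k$ and $0$, so its volume is positive.
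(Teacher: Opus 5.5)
Your proposal is correct and is essentially the paper's argument. You use \eqref{eq:spcl} to pass to the toric function $\Psi_{\log|P|}$ (the paper goes through the monomial map $Q$, which amounts to the same thing), apply Proposition~\ref{prop:FMDglob} with the lower set $\Gamma_{P,\infty}$ as its diagram at infinity, and identify $\m_n(\Psi_{\log|P|})=n!\,{\rm Vol}(\Gamma_{P,\infty})$ with $\m(P)$ via the preceding corollary. Your ``alternative'' identification of the mass would also need $\m_n(\log|P|)=\m_n(\log|Q|)$, which follows from $\log|P|=\log|Q|+O(1)$ near infinity and Taylor's comparison theorem, but your main route does not rely on it.
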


\begin{remark}
    An easy example showing that the lower condition is needed is the map $P:\C^2\to \C^4$ given by $P(z_1,z_2)=(z_1^3z_2^3, z_1^3, z_1z_2^3, z_2^2)$. In this case, $\m(P)=17$ and the bound (approx. 0.34) is greater than $\LCT(P)=1/3$.
\end{remark}

\begin{remark}
One might also look for refined relations involving $\m_k(u)$ in the spirit of the Demailly-Pham inequality \cite{DP}.
\end{remark}



\bigskip

\begin{thebibliography}{AA}

\bibitem{ACKPZ}
P.\,Åhag, U.\,Cegrell, S.\,Ko\l odziej, Pham Hoang Hiep, A.\,Zeriahi, 
\newblock{\it Partial pluricomplex energy and
integrability exponents of plurisubharmonic functions}, 
\newblock Adv. Math. {\bf 222} (2009), 2036–2058.

\bibitem{AYu}  L.\,A.\,Aizenberg and Yu.\,P.\,Yuzhakov,
\newblock{Integral Representations and Residues in Multidimensional
Complex Analysis.}.
\newblock Nauka, Novosibirsk, 1979. English transl.:
AMS, Providence, R.I., 1983..

\bibitem{BBL} T.\,Bayraktar, T.\,Bloom and N.\,Levenberg,
\newblock{\it Pluripotential theory and convex bodies},
\newblock Sb. Math. {\bf 209} (2018), No. 3, 352-384; translation from Mat. Sb. 209, No. 3, 67-101 (2018).


\bibitem{Berman}
R.\,Berman, 
\newblock{\it Bergman kernels for weighted polynomials and weighted equilibrium measures of $\Cn$}, 
\newblock Indiana Univ. Math. J. {\bf 58} (2009), No. 4, 1921--1946.

\bibitem{BB}
R.\,Berndtsson, 
\newblock{\it The openness conjecture for plurisubharmonic functions}, 
\newblock preprint at arXiv:1305.5781.


\bibitem{B} C.\,Bivià-Ausina,
\newblock{\it Log canonical threshold and diagonal ideals},
\newblock Proc. Amer. Math. Soc. {\bf 145}, no. 5 (2017), 1905–1916.


\bibitem{BF1} C.\,Bivià-Ausina and T.\,Fukui,
\newblock{\it Mixed \L ojasiewicz exponents and log canonical thresholds of ideals},
\newblock J. Pure Appl. Algebra  {\bf 220}, no. 1 (2016), 223–245.

\bibitem{BH17} C.\,Bivià-Ausina and J.\,A.\,C.\,Huarcaya,
\newblock{\it The special closure of polynomial maps and global non-degeneracy}, 
\newblock Mediterr. J. Math. {\bf 14}, No. 2 (2017), Art. 71, 21 pp.

\bibitem{BH} C.\,Bivià-Ausina and J.\,A.\,C.\,Huarcaya,
\newblock{\it Global multiplicity, special closure and non-degeneracy of gradient maps},
\newblock Rev. R. Acad. Cienc. Exactas Fis. Nat. Ser. A Mat.  {\bf 117:121} (2023).

\bibitem{BL} M.\,Blickle and R.\,Lazarsfeld,
\newblock{\it An informal introduction to multiplier ideals},
\newblock in Trends in Commutative Algebra, 87–114, Math. Sci. Res. Inst. Publ., 51, Cambridge Univ. Press, Cambridge, 2004.


\bibitem{BlLe}
T.\,Bloom and N.\,Levenberg, 
\newblock{\it Random polynomials and pluripotential-theoretic extremal functions}, 
\newblock Potential Anal. {\bf 42} (2015), 311–334.

\bibitem{BlSh}
T.\,Bloom and B.\,Shiffman, 
\newblock{\it Zeros of random polynomials on $\C^m$}, 
\newblock Math. Res. Lett. {\bf 14} (2007), 469–479.

\bibitem{BoLe}
L.\,Bos and N.\,Levenberg, 
\newblock{\it Bernstein-Walsh theory associated to convex bodies and applications to multivariate approximation theory}, 
\newblock Comput. Methods Funct. Theory {\bf 18} (2018), 361–388.

\bibitem{CLS} D.\,Cox, J.\,Little and D.\, O'Shea,
\newblock Using Algebraic Geometry. 
\newblock Graduate Texts in Mathematics, vol. 185. Springer, Berlin (2005).


\bibitem{Dbook}
J.-P.\,Demailly, 
\newblock{Complex Analytic and Differential Geometry.} 
\newblock Available at
\url{http://www-fourier.ujf-grenoble.fr/~demailly/books.html}.

\bibitem{D92}
J.-P.\,Demailly, 
\newblock{\it Regularization of closed positive currents and intersection theory}, 
\newblock J. Algebraic Geometry {\bf 1} (1992), 361--409.



\bibitem{D09}
J.-P.\,Demailly, 
\newblock{\it Estimates on Monge-Ampère operators derived from a local algebra inequality}, 
\newblock in:
Complex Analysis and Digital geometry, Proceedings of the Kiselmanfest 2006, Acta Universitatis Upsaliensis, 2009.


\bibitem{DK} J.\,P.\,Demailly and J.\,Koll\'ar,
\newblock{\it Semi-continuity of
complex singularity exponents and K\"ahler-Einstein metrics on
Fano orbifolds},
\newblock  Ann. Sci. Ecole Norm. Sup. (4) {\bf 34} (2001),
no. 4, 525--556.



\bibitem{DP} J.\,P.\,Demailly and H.\,H.\,Pham,
\newblock{\it A sharp lower bound for the log canonical threshold},
\newblock Acta Math. {\bf 212} (2014), no. 1, 1–9.



\bibitem{Ein2004} L.\,Ein, R.\,Lazarsfeld, K.\,E.\,Smith and D.\,Varolin,
\newblock{\it Jumping coefficients of multiplier ideals},
\newblock Duke Math. J. {\bf 123}, no. 3 (2003), 469–506.





\bibitem{dFEM1} T.\,de\,Fernex, L.\,Ein and M.\,Musta\c{t}\u a,
\newblock{\it Multiplicities and log canonical threshold},
\newblock J. Algebraic Geom. {\bf 13} (2004), no. 3, 603--615.

\bibitem{dFEM2} T.\,de\,Fernex and M.\,Musta\c{t}\u a,,
\newblock{\it Limits of log canonical thresholds},
\newblock Ann. Sci. Éc. Norm. Supér. (4) 42 (2009), No. 3, 491--515.

\bibitem{GZ}
{ V.\,Guedj and A.\,Zeriahi}, 
\newblock Degenerate complex Monge-Ampère equations.
\newblock EMS Tracts in Mathematics 26. Z\"{u}rich: European Mathematical Society (EMS), 2017.  

\bibitem{Gue} H.\,Guenancia,
\newblock{\it Toric plurisubharmonic functions and analytic adjoint ideal sheaves},
\newblock Math. Z. {\bf 271} (2012), No. 3-4, 1011--1035.

\bibitem{HCJ} W.\,K.\,Hayman, P.\,M.\,Cohn, 
B.\,E.\,Johnson,
\newblock{\it Subharmonic functions, vol.2}.
\newblock (London Mathematical Society Monographs), Academic Press (1990).


\bibitem{Howald} J.\,Howald,
\newblock{\it Multiplier ideals of monomial ideals},
\newblock Trans. Amer. Math. Soc. {\bf 353} (2001), no. 7, 2665--2671



\bibitem{Kiselman} C.\,O.\,Kiselman,
\newblock{\it Attenuating the singularities of plurisubharmonic functions},
\newblock Ann. Pol. Math. {\bf 60}, No.2, 173-197 (1994).

\bibitem{Klimek}
M.\,Klimek,
\newblock{Pluripotential Theory}.
\newblock Clarendon Press, 1991.


\bibitem{Kollar} J.\,Kollár,
\newblock{\it Which powers of holomorphic functions are integrable?},
\newblock ArXiv:0805.0756v1 [math.AG]


\bibitem{Ku}
{A.\,G.\,Kouchnirenko}, 
\newblock{\it Poly\`edres de Newton et
nombres de Milnor}, 
\newblock Invent. Math. {\bf 32} (1976), 1--31.

\bibitem{Krasinski}
{T.\,Krasi\'nski}, 
\newblock{\it On the \L ojasiewicz exponent at infinity of polynomial mappings}, 
\newblock Acta Math. Vietnam. {\bf 32} (2007), no. 2-3, 189–203. 





\bibitem{Levin} B.Ya.\,Levin,
\newblock{\it Lectures on Entire Functions}.
\newblock Translations of Mathematical Monographs, Vol. 150; AMS, 1996.

 \bibitem{MSSS}
B.\,S.\,Magnússon, Á.\,E.\,Sigurðardóttir, R.\,Sigurðsson, and B.\,Snorrason, 
\newblock{\it Polynomials with exponents in compact convex sets and associated weighted extremal functions - Fundamental results}, 
\newblock Ann. Polon. Math. {\bf 133} (2024), no. 1, 37--70.

\bibitem{Man} L.\,Manivel,
\newblock{\it Un théorème de prolongement $L^2$ de sections holomorphes d'une fibre hermitien},
\newblock Math. Z. {\bf 212} (1993), no. 1, 107--122.

\bibitem{MaVu}
G.\,Marinescu and D.-V.\,Vu, 
\newblock{\it Bergman kernel functions associated to measures supported on totally real manifolds}, 
\newblock J. Reine Angew. Math. {\bf 810} (2024), 217–251.

\bibitem{McZ} J.\,D.\,McNeal and Y.\,E.\,Zeytuncu,
\newblock{\it Multiplier ideals and integral closure of monomial ideals: an analytic approach.},
\newblock Proc. Amer. Math. Soc. {\bf 140} (2012), no. 5, 1483–1493.

\bibitem{Mu1} M.\,Musta\c{t}\u a,
\newblock{\it On multiplicities of graded sequences of ideals},
\newblock Journal of Algebra {\bf 256} (2002), 229–249.

\bibitem{Mu2} M.\,Musta\c{t}\u a,
\newblock{\it \textsc{impanga} lecture notes on log canonical thresholds},
\newblock notes by Tomasz Szemberg. EMS Ser. Congr. Rep., Contributions to algebraic geometry, 407–442, Eur. Math. Soc., Zürich, 2012.




\bibitem{Rash00}
{A.\, Rashkovskii}, 
\newblock{\it Newton numbers and residual measures of
  plurisubharmonic functions}, 
  \newblock Ann. Polon. Math. {\bf 75} (2000),
no. 3, 213--231.

\bibitem{Rash01}
{A.\,Rashkovskii}, 
\newblock{\it Indicators for plurisubharmonic functions
of logarithmic growth}, 
\newblock Indiana Univ. Math. J. {\bf 50} (2001),
no.~3, 1433--1446.

\bibitem{Rash03}
{A.\,Rashkovskii}, 
\newblock{\it Total masses of mixed Monge-Amp\`ere
currents}, 
\newblock Michigan Math. J. {\bf 51} (2003), no.~1, 169--186.

\bibitem{Rash13} A.\,Rashkovskii,
\newblock{\it Multi-circled singularities, Lelong numbers and integrability index},
\newblock J. Geom. Anal. {\bf 23} (2013), no. 4, 1976–-1992.

\bibitem{Rash15}
{A. Rashkovskii}, 
\newblock{\it Extremal cases for the log canonical threshold}, 
\newblock C. R. Acad. Sci. Paris, Ser. I {\bf 353} (2015), 21--24.

\bibitem{Rash17} A.\,Rashkovskii,
\newblock{\it A log-canonical threshold test},
\newblock Analysis Meets Geometry: A Tribut to Mikael Passare. Trends in Mathematics, 361-368. Birkhäuser, 2017.

\bibitem{SW} A.\,J.\, Sommese and S.\,W.\, Wampler,
\newblock The Numerical Solution of Systems of Polynomials Arising in Engineering and Science.
\newblock World Scientific (2005).

\bibitem{STot} E.\,Saff and V.\,Totik,
\newblock Logarithmic Potentials with Exterior Fields (with and appendix by T.\, Bloom).
\newblock Springer-Verlag (1997), Berlin.





\bibitem{ShZe1}
B.\,Shiffman and S.\,Zelditch, 
\newblock{\it Equilibrium distribution of zeros of random polynomials}, 
\newblock Int. Math. Res. Not. {\bf 1} (2003), 25–49.

\bibitem{ShZe2}
B.\,Shiffman and S.\,Zelditch, 
\newblock{\it Random polynomials with prescribed Newton polytope}, 
\newblock J. Amer. Math. Soc. {\bf 17} (2004), 49–108.


\bibitem{Si96} J.\,Siciak,
\newblock{ A remark on Tchebysheff polynomials in $\Cn$}.
\newblock Preprint, Jagellionian University, 1996.

\bibitem{Sk} {H.\,Skoda}, 
\newblock{\it Sous-ensembles analytiques d'ordre fini ou
infini dans $\Cn$}, 
\newblock Bull. Soc. Math. France {\bf 100} (1972),
353--408.

\bibitem{Sno} B.\,Snorrason, 
\newblock{\it Polynomials with exponents in compact convex sets and associated weighted extremal functions - Approximations and regularity}, 
\newblock arXiv:2410.20370.





\end{thebibliography}
\end{document}